\def\PPP{\mathcal{P}}
\def\AA{\mathbb{A}}
\def\ZZ{\mathbb{Z}}
\def\PP{\mathbb{P}}
\def\GG{\mathbb{G}}
\def\P1{\PP^1}
\def\AAA{\mathbb{A}}
\def\A2{\AAA^2}
\def\Aut{\mathrm{Aut}}
\def\GL{\mathrm{GL}}
\def\SL{\mathrm{SL}}
\def\SO{\mathrm{SO}}
\def\PGL{\mathrm{PGL}}
\def\Gal{\mathrm{Gal}}
\def\Spec{\mathrm{Spec}}
\def\Br{\mathrm{Br}}
\def\ol{\overline}
\newtheorem{theorem}{Theorem}[section]
\newtheorem{lemma}[theorem]{Lemma}
\newtheorem{prop}[theorem]{Proposition} 
\newtheorem{cor}[theorem]{Corollary}
\theoremstyle{definition}
\newtheorem{defin}[theorem]{Definition}
\newtheorem{remark}[theorem]{Remark}
\newtheorem{example}[theorem]{Example}
\newtheorem{constr}[theorem]{Construction}
\begin{document}
\title[Strongly incompressible curves]{Strongly incompressible curves}
\author{Mario Garcia-Armas}
\address{Department of Mathematics, University of British Columbia, Vancouver, \newline \indent %
BC V6T 1Z2, Canada} 
\email{marioga@math.ubc.ca}
\subjclass[2010]{14L30,14E07,14H37}
\keywords{Algebraic curves, group actions, Galois cohomology}
\thanks{The author is partially supported by a Four Year Fellowship, University of British Columbia.}

\begin{abstract}
Let $G$ be a finite group. A faithful $G$-variety $X$ is called strongly incompressible if every dominant $G$-equivariant rational map of $X$ onto another faithful $G$-variety $Y$ is birational. We settle the problem of existence of strongly incompressible $G$-curves for any finite group $G$ and any base field $k$.
\end{abstract}

\maketitle

\section{Introduction}

Let $G$ be an algebraic group. A {\em $G$-compression} of a generically free $G$-variety $X$ is a dominant $G$-equivariant rational map $X \dashrightarrow Y$, where $Y$ is also generically free. We say that $X$ is \emph{strongly incompressible} if every $G$-compression of $X$ is birational. This concept was introduced by Z. Reichstein in \cite[\S 2]{Re04}, where the author asks for a classification of strongly incompressible $G$-varieties (see also \cite[\S 7.1]{Re10}).

A related problem arises when we only consider self rational maps. More precisely, given a generically free $G$-variety $X$, is every dominant $G$-equivariant rational map  $X \dashrightarrow X$ a birational isomorphism? Even when $G$ is trivial, this appears to be an interesting problem in many contexts. In \cite{Ch10}, X. Chen proves that every dominant self rational map of a very general projective $K3$ surface of genus $g \geq 2$ is birational (see \cite{Ch12} for generalizations). 

If a finite group $G$ does not act faithfully on any curve of genus $\leq 1$, then there exist strongly incompressible complex $G$-curves (see \cite[ Example 6]{Re04}). N. Fakhruddin and R. Pardini have independently found examples of strongly incompressible complex $G$-surfaces for certain finite groups $G$. To our best knowledge, no examples of strongly incompressible varieties are known in higher dimensions.

If the base field $k$ has characteristic $p>0$, there exist no strongly incompressible $G$-varieties for any finite group $G$. We sketch a proof of this fact. Let $X$ be a faithful $G$-variety and let $F_{X/\Spec(k)} \colon X \to X^{(p)}$ be the relative Frobenius morphism associated to $X$ (see \cite[\S 3.2.4]{Liu02} for details). By functoriality, we may endow $X^{(p)}$ with an action of the group $G^{(p)}$, which is canonically isomorphic to $G$ (recall that $G$ is a finite constant group). This action makes $F_{X/\Spec(k)}$ into a dominant $G$-equivariant morphism, which has degree $p^{\dim(X)}$ by \cite[Cor. 2.27]{Liu02}. To complete the proof, we must show that the $G$-action on $X^{(p)}$ is faithful. If $N$ is the kernel of such action, we must have $k(X^{(p)}) \subset k(X)^N \subset k(X)$, where the inclusion $k(X^{(p)}) \subset k(X)$ is a purely inseparable extension induced by $F_{X/\Spec(k)}$. Thus $k(X)/k(X)^N$ is both Galois and purely inseparable, which implies that $N$ is trivial.

In this paper, we study the question of existence of strongly incompressible $G$-curves for every finite group $G$. We will assume throughout that the base field $k$ has characteristic $0$. We settle the classification problem for $G$-curves raised in \cite{Re04}, by considering finite groups $G$ that can act on a curve of genus $\leq 1$. In Section \ref{Sect_Incompressibility_Curves}, we show that strongly incompressible $G$-curves do exist if $G$ does not act faithfully on any curve of genus $0$.

\begin{theorem}[see Theorem \ref{Thm_Incompressible_No_Action_Genus0}]
Suppose that $G$ cannot act faithfully on a curve of genus $0$ via $k$-morphisms. Then there exists a strongly incompressible $G$-curve defined over $k$. 
\end{theorem}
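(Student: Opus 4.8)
The plan is to exhibit a concrete strongly incompressible $G$-curve by choosing one whose genus is large enough to rule out nontrivial compressions on numerical grounds. Since $G$ does not act faithfully on any curve of genus $0$, I want to realize $G$ faithfully on some curve $X$ of genus $g \geq 2$ and then argue that every $G$-compression $X \dashrightarrow Y$ must be birational.

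The key structural facts I would invoke are the following. First, any finite group $G$ acts faithfully on some smooth projective curve over $k$ (for instance by realizing $G$ inside $\PGL_n$ and taking a suitable $G$-stable complete intersection curve, or by a direct construction), so faithful $G$-curves exist; I then want to select one with $g(X) \geq 2$, which is automatic once we exclude the genus $0$ and genus $1$ cases but needs a separate small argument for genus $1$. Second, and crucially, a $G$-compression $f \colon X \dashrightarrow Y$ between smooth projective $G$-curves is a finite morphism, hence gives $k(Y) \hookrightarrow k(X)$, and since $Y$ is again a faithful $G$-curve, the hypothesis forbids $g(Y) \in \{0\}$. The plan is to combine the Riemann--Hurwitz formula with the requirement that $G$ act faithfully on $Y$: if $f$ had degree $\geq 2$, then $g(Y) < g(X)$ would be forced in the relevant range, and one iterates or bounds to reach a contradiction with faithfulness on low-genus targets.

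More precisely, here is the order in which I would carry out the steps. \textbf{Step 1:} Reduce to smooth projective curves, so that a $G$-compression becomes a finite $G$-equivariant morphism $f\colon X \to Y$ of degree $d = [k(X):k(Y)]$, and birational is equivalent to $d = 1$. \textbf{Step 2:} Construct a faithful $G$-curve $X$ with $g(X) \geq 2$; under the hypothesis the only obstruction is the genus $1$ case, which I would handle by passing to a cyclic cover or by choosing $X$ inside a linear system of large degree to force $g(X)$ arbitrarily large. \textbf{Step 3:} Suppose $d \geq 2$. Apply Riemann--Hurwitz to $f$ together with the induced quotient maps $X \to X/G$ and $Y \to Y/G$; because $G$ acts faithfully on both $X$ and $Y$, the ramification of $X \to X/G$ and $Y \to Y/G$ is controlled by the same group $G$, and comparing the two Riemann--Hurwitz equations constrains $g(Y)$. \textbf{Step 4:} Derive the contradiction: either $g(Y) = 0$ (excluded by hypothesis) or the numerics force $g(Y) = g(X)$ and $d = 1$.

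The main obstacle I expect is Step 3: controlling $g(Y)$ relative to $g(X)$ under an arbitrary finite $G$-equivariant map. The naive inequality $g(Y) \leq g(X)$ from Riemann--Hurwitz is not by itself enough to conclude birationality, so the real content is to use the $G$-action on \emph{both} source and target simultaneously. The clean way to do this is to descend to the quotients: $f$ induces $\ol{f}\colon X/G \to Y/G$, and the branch data of $X \to X/G$ versus $Y \to Y/G$ are linked because the stabilizers of points of $X$ map into stabilizers of their images in $Y$. I anticipate that the genuinely delicate point is ensuring that no branch points are ``absorbed'' by $\ol{f}$ in a way that would allow a nontrivial compression, and that this is precisely where the hypothesis ``$G$ does not act faithfully in genus $0$'' must be used — it guarantees that $X/G$ cannot be too simple, forcing $X/G \to Y/G$ to be birational and hence $d = 1$. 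If a direct genus comparison proves insufficient, the fallback is to choose $X$ with enough extra geometric rigidity (large genus together with a $G$-action having prescribed ramification) so that Riemann--Hurwitz alone excludes $d \geq 2$ for any faithful target $Y$.
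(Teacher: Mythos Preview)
Your plan has a genuine gap: it never handles the case $g(Y)=1$, and this case cannot be disposed of by Riemann--Hurwitz bookkeeping alone. The hypothesis only rules out genus $0$ targets; there are plenty of groups $G$ that admit no faithful action on a genus $0$ curve but do act faithfully on some elliptic curve (e.g.\ $G=(\ZZ/3\ZZ)^2$ acting by translations on an elliptic curve with full $3$-torsion). For such $G$, any faithful $G$-curve $X$ you write down could \emph{a priori} compress onto a genus $1$ curve, and nothing in Steps 3--4 prevents this. Your claim that ``the numerics force $g(Y)=g(X)$ and $d=1$'' is simply false: Riemann--Hurwitz for a degree $d\geq 2$ map only gives $g(Y)<g(X)$ when $g(X)\geq 2$, and $g(Y)\in\{1,2,\ldots,g(X)-1\}$ is perfectly compatible with a faithful $G$-action on $Y$. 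The side remark that the hypothesis makes $X/G$ ``not too simple'' is also off: $X/G$ can be $\PP^1$ regardless of whether $G$ acts on genus $0$ curves.

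What the paper does differently is twofold. First, it does \emph{not} try to show that a specific $X$ is strongly incompressible; instead it proves a reduction (Lemma~\ref{Lemma_NoCompression_Small_Genus}): if some faithful $G$-curve $X$ admits no $G$-compression to any curve of genus $\leq 1$, then among all $G$-compressions of $X$ one of minimal genus is strongly incompressible (here Riemann--Hurwitz is used, but only at the very end, to say that a map between curves of equal genus $\geq 2$ is birational). Second, to rule out genus $1$ targets it introduces a new ingredient you are missing: it constructs $X$ so that \emph{every} element of $G$ fixes a geometric point of $X$ (Theorem~\ref{Theorem_Curve_FixedPoints}). Then any $G$-compression $X\to E$ to a genus $1$ curve would force every $g\in G$ to fix a point of $E$; but by the structure of $\Aut(E)$ (Lemma~\ref{Lemma_Facts_Elliptic}), the image of $G$ in $\Aut(E)$ must meet the translation subgroup nontrivially unless $G$ is cyclic of order $1,2,3,4$ or $6$, and nontrivial translations have no fixed points. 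Since those small cyclic groups already act on $\PP^1$, they are excluded by the hypothesis, and the contradiction closes the argument. Your proposal has no analogue of this fixed-point step, and without it the genus $1$ case cannot be eliminated.
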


For finite groups $G$ that can act faithfully on a curve of genus $0$ over $k$ (recall that these are always cyclic, dihedral, or polyhedral groups), the situation is more delicate. In particular, it is important to decide whether a faithful $G$-curve $X$ can be $G$-compressed to $\PP^1$, provided that there exists a faithful $G$-action on the projective line. To this end, we make a small detour in Section \ref{Sect_Equiv_ProjSpaces} and given a projective representation $G \to \PGL(V)$, we construct a cohomological invariant associated to any faithful $G$-variety $X$, which allows us to determine whether $X$ can be mapped $G$-equivariantly to $\PP(V)$. In Section \ref{Sect_Actions_Proj_Line}, we compute the invariant for certain group actions on the projective line.  

We study the existence of strongly incompressible curves for groups that can act faithfully on a curve of genus $0$ in Sections \ref{Sect_Incompressibility_Cyclic_Dihedral} to \ref{Sect_Polyhedral}. Our results are summarized in the following theorem. For a definition of cohomological $2$-dimension of a field $k$, denoted by $\mathrm{cd}_2(k)$, we refer the reader to \cite[I.\S 3]{Se02}. We remark that $k$ has cohomological $2$-dimension $0$ if and only if every algebraic extension of $k$ is quadratically closed (see \cite[Lemma 2]{EW87}).

\begin{theorem}
Let $n \geq 2$ be an integer and let $\omega_n$ be a primitive $n$-th root of $1$.
\begin{enumerate}[\upshape(a)]
\item (Thm. \ref{Thm_Incompressible_No_Action_Genus0}, Prop. \ref{Prop_Odd_Dihedral_Incompressibility}) Let $G$ be either $\ZZ/n\ZZ$ or $D_{2n}$, where $n$ is odd. Then there exist strongly incompressible $G$-curves if and only if $\omega_n+\omega_n^{-1} \not \in k$.
\item (Thm. \ref{Thm_Incompressible_No_Action_Genus0}, Prop. \ref{Prop_Cyclic_Versal_Incompressibility} and \ref{Prop_Even_Cyclic_Not_Versal_Incompressibility}) Suppose that $n$ is even. Then there exist strongly incompressible $\ZZ/n\ZZ$-curves if and only if $\omega_n \not\in k$.
\item (Prop. \ref{Prop_Klein_Incompressibility}) There exist strongly incompressible $(\ZZ/2\ZZ)^2$-curves if and only if $\mathrm{cd}_2(k) > 0$.
\item (Thm. \ref{Thm_Incompressible_No_Action_Genus0}, Prop. \ref{Prop_Even_Dihedral_Incompressibility}) Suppose that $n \geq 4$ is even. Then there exist strongly incompressible $D_{2n}$-curves if and only if either $\omega_n+\omega_n^{-1} \not\in k$, or $\mathrm{cd}_2(k) > 0$. 
\item (Prop. \ref{Prop_Incompressibility_Polyhedral}) Let $G$ be a polyhedral group, i.e., $G = A_4$, $S_4$, or $A_5$. Then there exist strongly incompressible $G$-curves if and only if $\mathrm{cd}_2(k) > 0$.
\end{enumerate}
\end{theorem}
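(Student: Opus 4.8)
The plan is to read the statement as a synthesis of two opposing mechanisms—\emph{universal compressibility} on one side and a \emph{cohomological obstruction} on the other—organized around the single question of whether $G$ admits a faithful action on a genus-$0$ curve over $k$. I would first record a dictionary translating the numerical hypotheses in (a)--(e) into this language. For $\ZZ/n\ZZ$ and $D_{2n}$ the condition $\omega_n+\omega_n^{-1}\in k$ is exactly the condition for an order-$n$ element to lie in $\PGL_2(k)$: a lift has characteristic polynomial over $k$, so $\omega_n+\omega_n^{-1}=\mathrm{tr}^2/\det-2\in k$. The finer datum $\omega_n\in k$ and the hypothesis $\mathrm{cd}_2(k)>0$ detect, respectively, whether a \emph{twisted} faithful $G$-curve of genus $0$ can fail to map $G$-equivariantly to the split $\PP^1$. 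With this dictionary each part splits into an ``existence'' and a ``non-existence'' half.

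For the non-existence (``only if'') direction I would prove a uniform compressibility statement: if $G\hookrightarrow\PGL(V)=\PGL_2$ over $k$ and the cohomological invariant of Section~\ref{Sect_Equiv_ProjSpaces} vanishes on every faithful $G$-curve, then no faithful $G$-curve is strongly incompressible. This has two ingredients. First, for a curve $X$ of positive genus the vanishing invariant produces a dominant $G$-equivariant map $X\dashrightarrow\PP(V)$, which lowers the genus and is thus a non-birational compression. Second, the genus-$0$ curves themselves—including $\PP^1$ with its $G$-action—must be shown to admit non-birational $G$-compressions; here I would exhibit explicit self-maps, such as $z\mapsto z^{n+1}$ for the standard cyclic action and its dihedral and polyhedral analogues computed in Section~\ref{Sect_Actions_Proj_Line}. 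The vanishing of the invariant is automatic precisely when the relevant Brauer classes die: for odd $n$ this needs only $\omega_n+\omega_n^{-1}\in k$, the $2$-primary obstruction being absent, whereas for even $n$, for $(\ZZ/2\ZZ)^2$, for $D_{2n}$ and for the polyhedral groups it requires in addition $\omega_n\in k$ or $\mathrm{cd}_2(k)=0$—which is exactly where those hypotheses enter, through Propositions~\ref{Prop_Odd_Dihedral_Incompressibility}, \ref{Prop_Cyclic_Versal_Incompressibility}, \ref{Prop_Klein_Incompressibility}, \ref{Prop_Even_Dihedral_Incompressibility} and \ref{Prop_Incompressibility_Polyhedral}.

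For the existence (``if'') direction I would separate the easy regime from the delicate one. Whenever the stated condition forces $G$ to admit \emph{no} faithful action on any genus-$0$ curve, the conclusion is immediate from Theorem~\ref{Thm_Incompressible_No_Action_Genus0}; this settles the cases $\omega_n+\omega_n^{-1}\notin k$ in (a), (b) and (d). In the delicate regime $G$ does embed in $\PGL_2(k)$ yet strongly incompressible curves persist: even cyclic with $\omega_n\notin k$, Klein four and even dihedral with $\mathrm{cd}_2(k)>0$, and the polyhedral groups with $\mathrm{cd}_2(k)>0$. Here the natural candidate is a \emph{non-split} faithful $G$-curve $C$ of genus $0$ whose cohomological invariant is nonzero, so that $C$ admits no $G$-compression to the split $\PP^1$, while its genus forbids compression to anything of positive genus. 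It then remains to rule out non-birational $G$-compressions of $C$ onto other genus-$0$ $G$-curves: a split target would force the invariant of $C$ to vanish by functoriality, so only maps onto other non-split conics survive, and these I would eliminate by a minimality argument on the associated torus or quaternion class. The nonzero invariant is furnished by a non-split quaternion class, available exactly when $\mathrm{cd}_2(k)>0$, or in the even cyclic case by the class separating $\omega_n$ from $\omega_n+\omega_n^{-1}$; these constructions are carried out in Propositions~\ref{Prop_Even_Cyclic_Not_Versal_Incompressibility}, \ref{Prop_Klein_Incompressibility}, \ref{Prop_Even_Dihedral_Incompressibility} and \ref{Prop_Incompressibility_Polyhedral}.

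The hard part will be twofold. On the non-existence side, one must certify that in the versal regimes the invariant vanishes on \emph{every} faithful $G$-curve simultaneously, so that all positive-genus curves descend to $\PP^1$; this is a statement about the whole family of forms, and it is where the death of the relevant Brauer classes—the gap between $\omega_n$ and $\omega_n+\omega_n^{-1}$, and the hypothesis $\mathrm{cd}_2(k)=0$—is used essentially. On the existence side, the delicate point is not the nonvanishing of the invariant, which only blocks compression to the \emph{split} $\PP^1$, but the exclusion of non-birational $G$-maps from the candidate $C$ onto other non-split genus-$0$ forms. Ruling these out uniformly across the cyclic, dihedral and polyhedral families—so that the invariant computed in Section~\ref{Sect_Actions_Proj_Line} genuinely obstructs \emph{all} compressions rather than one distinguished map—is the crux on which the equivalences (a)--(e) ultimately rest.
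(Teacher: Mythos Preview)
Your organization of the non-existence direction is close to the paper's, though the paper leans on \emph{versality} of $\fourIdx{}{\rho}{}{}{\PP^1}$ (Propositions~\ref{Prop_Cyclic_Versal_Incompressibility} and~\ref{Prop_Odd_Dihedral_Incompressibility}) rather than on vanishing of $\Delta_\rho$ directly in the odd-cyclic and odd-dihedral cases; and in the $\mathrm{cd}_2(k)=0$ cases it uses $\Br_2(k(X)^G)=0$ (via $\mathrm{cd}_2$ of a function field) to kill $\Delta_\rho(X)$ for every $X$. These are minor recastings. The genus-$0$ self-compressions you mention are exactly Lemma~\ref{Lemma_Compr_Cyclic_Dihedral} and Lemma~\ref{Lemma_Comp_Polyhedral_P1}.

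The genuine gap is in your existence argument in the ``delicate regime.'' You propose to exhibit a strongly incompressible $G$-curve \emph{of genus $0$}---a non-split conic with nontrivial invariant---and then to exclude non-birational $G$-maps onto other non-split conics by an unspecified ``minimality argument on the associated torus or quaternion class.'' This is not what the paper does, and I do not see how to make it work. Over $\ol{k}$ any such conic becomes $\PP^1$ with its standard $G$-action, which carries many non-birational $G$-self-maps; you would need to show that none of these descends to $k$, and nothing in your invariant detects this, since $\Delta_\rho$ is constant along self-maps. The paper's route is entirely different: it constructs a $G$-curve $X$ of genus $\geq 2$ (hyperelliptic in the cyclic, Klein and dihedral cases, via trace forms of \'etale algebras in the polyhedral cases) and proves two things: $X$ cannot be $G$-compressed to any genus-$0$ curve, because $\Delta_\rho(X)$ or $\widehat{\Delta}(X)$ is a carefully arranged \emph{nontrivial} Brauer class over $k(X)^G$ (Lemma~\ref{Lemma_Square_Quaternions} is the workhorse here); and $X$ cannot be $G$-compressed to any genus-$1$ curve, because of the structure of $\Aut(E)$ in Lemma~\ref{Lemma_Facts_Elliptic} (for cyclic, Klein, dihedral) or because of the computation $\widehat{\Delta}(C)=[(-1,-1)_2]$ for all $A_4$-curves of genus $1$ in Proposition~\ref{Prop_Genus_1_Alternating}. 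Lemma~\ref{Lemma_NoCompression_Small_Genus} then produces a strongly incompressible curve by a Hurwitz minimality argument \emph{among curves of genus $\geq 2$}. Your plan bypasses the genus-$1$ analysis by working at genus $0$, but trades it for a step---excluding $G$-compressions among non-split conics---for which you have supplied no mechanism.
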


In particular, we note the following corollary of the above results, which answers the strong incompressibility problem for curves over an algebraically closed field, as posed in \cite{Re04}.

\begin{cor}
Let $G$ be a finite group and let $k$ be an algebraically closed base field. Then there exists a strongly incompressible $G$-curve if and only if $G$ does not act faithfully on $\PP^1$, i.e., $G$ is not cyclic, dihedral, $A_4$, $S_4$ or $A_5$.
\end{cor}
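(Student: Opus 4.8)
The plan is to deduce this corollary from the two theorems stated above by specializing to an algebraically closed base field $k$ of characteristic $0$. Two simplifications drive the argument. First, every root of unity lies in $k$, so for each $n$ we have $\omega_n \in k$, and a fortiori $\omega_n + \omega_n^{-1} \in k$. Second, $\mathrm{cd}_2(k) = 0$: an algebraically closed field has trivial absolute Galois group, hence cohomological dimension $0$, so its cohomological $2$-dimension vanishes (equivalently, every algebraic extension of $k$, being $k$ itself, is quadratically closed). Finally, over an algebraically closed field the unique smooth projective curve of genus $0$ is $\PP^1$, since it has a rational point; thus a faithful $G$-action on a genus $0$ curve is the same as a faithful $G$-action on $\PP^1$, and the finite groups admitting such an action are exactly the finite subgroups of $\PGL_2(k)$, namely the cyclic, dihedral, and polyhedral groups $A_4$, $S_4$, $A_5$.

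For the \emph{if} direction, where $G$ does not act faithfully on $\PP^1$, I would simply invoke the first theorem: over algebraically closed $k$ the hypothesis ``$G$ cannot act faithfully on a curve of genus $0$'' is equivalent to ``$G$ does not act faithfully on $\PP^1$'', and the conclusion is precisely the existence of a strongly incompressible $G$-curve.

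For the converse, suppose $G$ does act faithfully on $\PP^1$, so $G$ is cyclic, dihedral, or polyhedral, and I would rule out strongly incompressible $G$-curves case by case using the summary theorem. If $G = \ZZ/n\ZZ$, then part (a) (for $n$ odd) or part (b) (for $n$ even) shows that strongly incompressible curves exist if and only if $\omega_n + \omega_n^{-1} \notin k$, respectively $\omega_n \notin k$; both conditions fail since $k$ contains all roots of unity. If $G = D_{2n}$ is dihedral, then part (a) (for $n$ odd), part (d) (for $n \geq 4$ even), and part (c) for the Klein four group $D_4 = (\ZZ/2\ZZ)^2$ show that existence requires either a root-of-unity condition to fail or $\mathrm{cd}_2(k) > 0$, neither of which holds here. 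Finally, if $G$ is one of $A_4$, $S_4$, $A_5$, then part (e) shows that existence is equivalent to $\mathrm{cd}_2(k) > 0$, which again fails. Hence no strongly incompressible $G$-curve exists in any of these cases.

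There is no genuinely hard step, as the statement is a specialization of the preceding theorems; the one point demanding care is the completeness of the case analysis. I would make explicit the classification of finite subgroups of $\PGL_2$ to be certain that every faithful action on $\PP^1$ is accounted for, attend to the boundary case $n = 2$ in which $D_4$ coincides with the Klein four group handled by part (c), and dispose of the trivial group separately: a trivial $G$-curve is never strongly incompressible, since any curve admits a non-birational dominant map to $\PP^1$ (for instance a degree-$2$ self-map of $\PP^1$ itself), which is consistent with the trivial group acting faithfully on $\PP^1$.
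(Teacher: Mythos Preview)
Your proposal is correct and is exactly the deduction the paper intends: the corollary is stated without its own proof, as a direct specialization of the two preceding theorems, and your case analysis (including the care with the trivial group and the identification $D_4 \cong (\ZZ/2\ZZ)^2$) fills in precisely what is left implicit.
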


\bigskip
\noindent{\bf Acknowledgements.}
I am extremely grateful to Z. Reichstein for introducing me to the subject. This work would not have been possible without his helpful comments and suggestions. I also thank Bob Guralnick for useful conversations.

\section{Notation and preliminaries} 
Let $k$ denote a base field of characteristic $0$ and let $\ol{k}$ be its algebraic closure. A {\em $k$-variety} $X$ is a geometrically reduced scheme of finite type over $k$ (not necessarily irreducible). The word ``curve'' is reserved for a geometrically irreducible smooth projective $1$-dimensional variety. A {\em point} of a variety means a geometric point, unless stated otherwise.

As usual, a rational map $X \dashrightarrow Y$ of $k$-varieties is an equivalence class of $k$-morphisms $U \to Y$, where $U$ is a dense open subset of $X$. We denote the algebra of rational functions of $X$ by $k(X)$. In general, $k(X)$ is the direct sum of the function fields of the irreducible components of $X$. 

An {\em algebraic group $G$ over $k$} is a smooth affine group scheme of finite type over $k$. We say that $X$ is a {\em $G$-variety} if $G$ acts morphically on $X$. The inclusion $k(X)^G \hookrightarrow k(X)$ induces a {\em rational quotient map} $\pi_X\colon X \dashrightarrow W$, where $k(W) = k(X)^G$ (see \cite[\S 2.3]{Re00}). The variety $W$ is denoted by $X/G$ and is unique up to birational isomorphism.
If $N$ is a normal subgroup of $G$, there exists a model of $X/N$ with a regular action of $G/N$ (see \cite[Remark 2.6]{Re00}). It is uniquely defined up to $G/N$-equivariant birational isomorphism. A rational map $X \dashrightarrow Y$ of $G$-varieties gives rise to a $G/N$-equivariant rational map $\ol{f}\colon X/N \dashrightarrow Y/N$ such that $\ol{f} \circ \pi'_X = \pi'_Y \circ f$, where $\pi'_X\colon X \dashrightarrow X/N$ and $\pi'_Y\colon Y \dashrightarrow Y/N$ are the rational quotient maps.

A $G$-action on $X$ is said to be {\em generically free} if there exists a dense $G$-invariant open subset of $X$ with trivial scheme-theoretic stabilizers. (In particular, a faithful action of a finite group is generically free.) A {\em $G$-compression} is a $G$-equivariant dominant rational map $X \dashrightarrow Y$, where $X$ and $Y$ are generically free $G$-varieties. A generically free $G$-variety $X$ contains a dense $G$-invariant open subset $U$ which is the total space of a $G$-torsor $\pi_U\colon U \to U/G$ (see \cite[Thm. 4.7]{BF03}). We say that $X$ is {\em primitive} if $G$ transitively permutes the irreducible components of $X$ (equivalently, if $X/G$ is irreducible). Under this condition, the fiber at the generic point of $U/G$ is a $G$-torsor $T \to \Spec(K)$, where $K \cong k(X)^G$. The class of this torsor in $H^1(K,G)$ will be denoted by $[X]$. Conversely, given a finitely generated field extension $K$ of $k$, any class in $H^1(K,G)$ determines a generically free primitive $G$-variety $X$ endowed with a $k$-isomorphism $k(X)^G \cong K$, uniquely up to $G$-equivariant birational isomorphism. In what follows, we assume all $G$-varieties to be primitive, unless stated otherwise.

Given a central simple algebra $A$, we will denote its Brauer class by $[A]$. As usual, the symbol $(a,b)_2$ denotes the quaternion algebra with basis $1, i, j, ij$, subject to the relations $i^2=a, j^2=b$ and $ij+ji=0$. The following simple observation will be used repeatedly in the sequel.

\begin{lemma} \label{Lemma_Square_Quaternions}
Let $k(x)$ be a rational function field over $k$, and suppose that the quaternion algebra $(f(x),g(x))_2$ is split over $k(x)$, where $f, g \in k[x]$ are separable. Then $f(\alpha)$ is a square in $k(\alpha)$ for any root $\alpha \in \ol{k}$ of $g$.  
\end{lemma}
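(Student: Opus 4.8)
The plan is to translate the splitting hypothesis into an explicit polynomial identity and then specialize at $\alpha$. Recall that a quaternion algebra $(a,b)_2$ over a field of characteristic $\neq 2$ is split precisely when the ternary form $\langle 1, -a, -b\rangle$ is isotropic, equivalently when the conic $aX^2 + bY^2 = Z^2$ has a nontrivial rational point. Applying this over $k(x)$ with $a = f(x)$, $b = g(x)$, I would first record that there exist $P, Q, R \in k(x)$, not all zero, with $R^2 = f P^2 + g Q^2$. Clearing denominators I may take $P, Q, R \in k[x]$, and after dividing out their common factor I may assume $\gcd(P, Q, R) = 1$.

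Next I specialize. Fix a root $\alpha \in \ol{k}$ of $g$ and let $p \in k[x]$ be its minimal polynomial over $k$, so that $p$ is an irreducible factor of $g$ and $k(\alpha) \cong k[x]/(p)$. Substituting $x = \alpha$ into the identity and using $g(\alpha) = 0$ yields $R(\alpha)^2 = f(\alpha) P(\alpha)^2$ in $k(\alpha)$. As long as $P(\alpha) \neq 0$, this exhibits $f(\alpha) = \bigl(R(\alpha)/P(\alpha)\bigr)^2$ as a square in $k(\alpha)$, which is exactly the claim. (If $f(\alpha) = 0$ — the degenerate case where $\alpha$ is a common root of $f$ and $g$ — there is nothing to prove, since $0$ is a square.)

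The only genuine obstacle is to rule out $P(\alpha) = 0$, and this is precisely where separability of $g$ enters. Suppose $P(\alpha) = 0$. Then the identity forces $R(\alpha) = 0$ as well, so $p$ divides both $P$ and $R$; consequently $p^2$ divides $R^2 - f P^2 = g Q^2$. Since $g$ is separable and $p \mid g$, the factor $p$ occurs in $g$ with multiplicity exactly one, i.e. $v_p(g) = 1$, so $v_p(gQ^2) = 1 + 2\, v_p(Q)$ is odd; being forced to be $\geq 2$, it gives $v_p(Q) \geq 1$, that is $p \mid Q$. But then $p \mid \gcd(P, Q, R)$, contradicting the normalization. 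Hence $P(\alpha) \neq 0$ and the argument closes. I expect this common-factor/valuation step to be the crux, while everything else is formal; note that only the separability of $g$ is actually needed. Conceptually, the same computation is the tame symbol (residue) of the split class $(f,g)_2 \in \Br(k(x))$ at the place $p$: as the algebra is trivial in $\Br(k(x))$ all its residues vanish, and the residue at $p$ is represented by $f(\alpha)$ modulo squares, recovering the conclusion.
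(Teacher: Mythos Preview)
Your argument is correct and follows essentially the same line as the paper's proof: both translate the splitting hypothesis into a polynomial identity $fP^2 + gQ^2 = R^2$ with $\gcd(P,Q,R)=1$, specialize at $\alpha$, and use separability of $g$ to rule out $P(\alpha)=0$ via a common-factor contradiction. Your version is in fact slightly sharper: the paper also proves the reverse implication $R(\alpha)=0 \Rightarrow P(\alpha)=0$ (invoking separability of $f$), but as you observe this direction is unnecessary, and only the separability of $g$ is actually used.
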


\begin{proof}
Since the quaternion algebra $(f(x),g(x))_2$ is split, there exist coprime polynomials $p ,q, r \in k[x]$ such that the polynomial identity
$$
f(x) p(x)^2 + g(x) q(x)^2 = r(x)^2
$$
holds. Substituting $\alpha$ in the above identity implies that $f(\alpha) p(\alpha)^2 = r(\alpha)^2$. Note that $p(\alpha)=0$ implies $r(\alpha) = 0$. Conversely, suppose that $r(\alpha) = 0$. Then $\alpha$ is a root of $f(x) p(x)^2$ of multiplicity at least $2$, which implies that $p(\alpha) = 0$ since $f$ is separable. It follows that $r(\alpha)=0$ if and only if $p(\alpha) = 0$.

Assume for the sake of contradiction that $p(\alpha) = r(\alpha) = 0$. Then it follows that $\alpha$ is a root of $g(x) q(x)^2$ of multiplicity at least $2$. Since $g$ is separable, we obtain that $q(\alpha) = 0$. Hence $\alpha$ is a common root of $p, q, r$, which is impossible since they are relatively prime. This contradiction shows that $p(\alpha) r(\alpha) \neq 0$ and therefore $f(\alpha) = r(\alpha)^2 p(\alpha)^{-2} \in k(\alpha)^{\times 2}$.
\end{proof}

\section{Strong Incompressibility of Curves} \label{Sect_Incompressibility_Curves}

Let $G$ be a finite group. Recall that a faithful $G$-variety $X$ is said to be strongly incompressible if any $G$-compression $X \dashrightarrow Y$ onto a faithful $G$-variety $Y$ is birational. We are interested in the study of strong incompressibility of $G$-curves. We remark that the existence of strongly incompressible $G$-curves depends not only on the group $G$, but also on the base field $k$.

Note also that $G$-compressions of curves extend naturally to $G$-equivariant surjective finite morphisms, so we will regard $G$-compressions of curves as morphisms in the sequel. The following simple lemma is extremely useful in our analysis.

\begin{lemma}[{cf. \cite[Example 6]{Re04}}] \label{Lemma_NoCompression_Small_Genus}
Suppose that there exists a faithful $G$-curve $X$ that cannot be $G$-compressed to any $G$-curve of genus $\leq 1$. Then there exists a strongly incompressible $G$-curve.
\end{lemma}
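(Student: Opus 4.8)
The plan is to produce the desired strongly incompressible curve as a $G$-curve of \emph{minimal genus} among all targets of $G$-compressions of $X$. First I would observe that the identity map exhibits $X$ as a $G$-compression of itself, so the set of genera
$$
S = \{\, g(Y) : Y \text{ is a faithful } G\text{-curve and } X \text{ admits a } G\text{-compression onto } Y \,\}
$$
is nonempty. By hypothesis $X$ admits no $G$-compression onto a $G$-curve of genus $\leq 1$, so every element of $S$ is an integer $\geq 2$; in particular $S$ has a least element $g_0 \geq 2$ by well-ordering. I then fix a faithful $G$-curve $Y$ with $g(Y) = g_0$ onto which $X$ can be $G$-compressed.

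The claim is that this $Y$ is strongly incompressible. Let $f\colon Y \dashrightarrow Z$ be any $G$-compression onto a faithful $G$-curve $Z$; by the remark preceding the statement we may regard $f$ as a finite surjective $G$-equivariant morphism of some degree $d \geq 1$. Composing with the given compression $X \dashrightarrow Y$ yields a $G$-compression $X \dashrightarrow Z$, so $g(Z) \in S$. Minimality of $g_0$ then gives $g(Z) \geq g_0 = g(Y)$, while the hypothesis on $X$ gives $g(Z) \geq 2$.

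To conclude I would invoke the Riemann--Hurwitz formula, which applies because $\mathrm{char}\,k = 0$ forces $f$ to be separable. Writing $R$ for the (effective) ramification divisor,
$$
2g(Y) - 2 = d\bigl(2g(Z) - 2\bigr) + \deg R .
$$
Since $g(Z) \geq 2$ we have $2g(Z) - 2 \geq 2 > 0$, and combined with $g(Z) \geq g(Y)$, $d \geq 1$, and $\deg R \geq 0$ this forces
$$
2g(Y) - 2 \geq d\bigl(2g(Y) - 2\bigr) \geq 2g(Y) - 2 .
$$
As $2g(Y) - 2 > 0$, equality throughout forces $d = 1$ and $\deg R = 0$, so $f$ is birational. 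Hence every $G$-compression of $Y$ onto a faithful $G$-curve is birational, i.e. $Y$ is strongly incompressible.

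The only points requiring care, and thus the main (if modest) obstacle, are foundational rather than computational: that a $G$-compression of curves genuinely promotes to a finite surjective morphism so that Riemann--Hurwitz is available (guaranteed by the remark together with properness and smoothness of curves), that the target of a $G$-compression is again faithful so that $g(Z)$ really lands in $S$, and that the genus in play is the geometric genus, stable under the possibly non-closed base field $k$. None of these presents real difficulty; the entire force of the argument is the monotonicity of genus under nonconstant separable maps between curves of genus $\geq 2$.
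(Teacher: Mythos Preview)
Your proof is correct and follows essentially the same approach as the paper: take a target of minimal genus among all $G$-compressions of $X$, observe this minimum is at least $2$, and then use Riemann--Hurwitz to rule out any further nontrivial compression. The only difference is cosmetic --- you index by genera rather than by curves and spell out the Riemann--Hurwitz inequality chain in more detail than the paper does.
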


\begin{proof}
Consider the set $S$ consisting of faithful $G$-curves $Y$ such that there exists a $G$-compression $X \to Y$. By assumption, the genus $g(Y) \geq 2$ for all $Y \in S$. Select a curve $Y_0 \in S$ having minimal genus. We claim that $Y_0$ is strongly incompressible. Indeed, suppose that we have a $G$-compression $f\colon Y_0 \to Y'$, which implies that $Y' \in S$. In particular, we must have $g(Y') \geq g(Y_0) \geq 2$. However, by Hurwitz Formula (see \cite[Thm 7.4.16]{Liu02}) it also follows that $g(Y_0) \geq g(Y')$, whence equality must hold. This implies that $f$ is birational. 
\end{proof}

The following result will be instrumental in the sequel. It is a special case of \cite[Prop. 8.6]{RY01} (see also \cite[Rem. 9.9]{RY01}), whose proof depends on resolution of singularities. We include an alternative proof because it works over any base field of characteristic $0$, it is more elementary and, in particular, does not rely on resolution of singularities.

\begin{theorem} \label{Theorem_Curve_FixedPoints}
There exists a faithful $G$-curve $X$ defined over $k$ such that every element of $G$ fixes some geometric point of $X$.
\end{theorem}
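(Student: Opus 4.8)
The plan is to realize $X$ inside the projective space of a faithful linear representation, exploiting the fact that linear automorphisms always have eigenvectors. Fix a faithful representation $G \hookrightarrow \GL(V)$ defined over $k$ (for instance the regular representation, which is already defined over $\QQ \subseteq k$), so that $G$ acts faithfully on $\PP(V) \cong \PP^{n-1}_k$. For every $g \in G$, any eigenvector of $g$ determines a geometric point of $\PP(V)$ fixed by $g$; thus on the ambient space every element already fixes a point. The task therefore reduces to producing a smooth, geometrically irreducible, $G$-stable curve $X \subseteq \PP(V)$ on which $G$ acts faithfully and which meets the fixed locus $\mathrm{Fix}(g)$ for every $g \neq 1$.

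The second step is to pin down, once and for all, a finite set of points through which $X$ will be forced to pass. For each cyclic subgroup $C \le G$ I would choose a generator $c$ and a generic eigenvector $v_C$ of $c$, arranging (by genericity of $v_C$ in the relevant eigenspace, after enlarging $V$ if necessary) that the stabilizer of the point $p_C = [v_C]$ is exactly $C$. Since $h \cdot p_C \in \mathrm{Fix}(hch^{-1})$, the $G$-orbit of $p_C$ meets the fixed locus of every generator of every conjugate of $C$; letting $C$ range over conjugacy classes of cyclic subgroups, the union $P := \bigcup_C G \cdot p_C$ is a finite, $G$-stable and $\Gal(\ol{k}/k)$-stable set meeting $\mathrm{Fix}(g)$ for every $g \neq 1$, each of whose points has cyclic stabilizer.

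The heart of the argument is an equivariant Bertini construction. Since $G$ is linearly reductive in characteristic $0$, for $d \gg 0$ the space of $G$-invariant forms of degree $d$ vanishing on $P$ is large and defines a $G$-stable linear system that is base-point free away from $P$. Intersecting $n-1$ general members, I would obtain a $G$-stable complete-intersection curve $X$ containing $P$; the usual Bertini theorems, valid over any infinite field, guarantee that a general such $X$ is smooth and geometrically irreducible and that no nontrivial element of $G$ fixes $X$ pointwise, so the action is faithful. At a point $p_C \in P$ the stabilizer $C$ is cyclic, hence acts on the one-dimensional tangent space $T_{p_C} X$ without kernel; this is precisely what makes smoothness at the prescribed fixed points compatible with faithfulness. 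Every $g \neq 1$ then fixes a geometric point of $X$, namely a point of $P$ lying in $\mathrm{Fix}(g)$, while the identity fixes everything, proving the theorem.

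The main obstacle I anticipate is precisely this last step: verifying that the $G$-invariant linear system vanishing on $P$ simultaneously delivers a curve that is smooth (including at the imposed points of $P$), geometrically irreducible, and faithful, all over a base field that need not be algebraically closed. The delicate point is reconciling forced passage through the $p_C$ with global smoothness and faithfulness; choosing the $p_C$ to have cyclic stabilizers is exactly the device that removes the potential conflict, since a positive-dimensional pointwise-fixed locus — which would destroy faithfulness — can arise only at points whose stabilizer acts non-faithfully on the tangent line. Checking that the generic eigenvectors $v_C$ can indeed be taken with stabilizer exactly $C$, possibly after replacing $V$ by a larger faithful representation, is a subsidiary verification that this approach requires.
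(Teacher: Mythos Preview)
Your strategy is entirely different from the paper's. The paper first embeds $G$ into a symmetric group $S_m$, and then into $S_p$ for a large prime $p$. It then manufactures, by an explicit interpolation argument, a polynomial $P \in k[x]$ of degree $p$ such that for every nontrivial cycle type of $S_m$ there is a $k$-point $\beta_i$ over which $P(x)-\beta_i$ factors with exactly that ramification pattern. The curve $X$ is the smooth projective model of the splitting field of $P(x)-t$ over $k(t)$; one checks $\Gal = S_p$ using the $p$-cycle over $\infty$ together with a transposition coming from one of the $\beta_i$, and every element of $S_m \subset S_p$ then appears as the generator of an inertia group, hence fixes a geometric point. The whole argument uses nothing beyond elementary Galois theory of function fields and the Chinese Remainder Theorem. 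Your Bertini-in-$\PP(V)$ approach is more geometric and would adapt to other ambient varieties, but it trades explicitness for genericity.

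The gap you flag is real and a bit deeper than you suggest. Classical Bertini gives smoothness only away from the base locus, so smoothness of $X$ at each $p_C$ has to be established separately; and once $X$ is smooth at $p_C$, faithfulness of the $G$-action on $X$ forces $C$ to act on the tangent line $T_{p_C}X$ via a \emph{faithful} character (a nontrivial $c\in C$ acting trivially on $T_{p_C}X$ would, by linearization in characteristic $0$, act trivially on $X$). That is an extra condition on the tangent direction of the general member at $p_C$, not a free consequence of genericity, so your linear system must be set up to force it. Two further points need attention: you must check that the general $G$-invariant complete-intersection curve is not contained in any $\mathrm{Fix}(g)$, which is what actually yields faithfulness; and your set $P$ is not obviously $\Gal(\ol{k}/k)$-stable, since $\sigma(p_C)$ lands in a possibly different eigenspace of $c$ and need not lie in $G\cdot p_C$---you should replace $P$ by its Galois closure (the stabilizers remain cyclic, so nothing is lost). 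None of this is fatal, but turning the outline into a proof amounts to establishing an equivariant Bertini theorem with prescribed base points and tangent behavior, which is appreciably more work than the paper's explicit construction.
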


\begin{proof}
See Appendix \ref{Appendix_Curves}.
\end{proof}

We now recall some facts about the automorphism group of an elliptic curve. 

\begin{lemma} \label{Lemma_Facts_Elliptic}
Let $E$ be an elliptic curve defined over a field $k$.
\begin{enumerate} [\upshape(a)]
\item There exists a split exact sequence
\begin{equation} \label{Eq_Exact_Seq_Elliptic}
\xymatrix{1 \ar[r]& E \ar[r]^-{i} &\Aut(E) \ar[r]^-{\pi}& \Aut_0(E) \ar[r] & 1},
\end{equation}
where $E$ acts on itself by translations and $\Aut_0(E)$ denotes the group of automorphisms of $E$ that preserve the origin.
\item There exists a natural isomorphism $\Aut_0(E) \cong \mu_n$, where 
$$
n = \left\{ \begin{array}{cl}
2, & \textrm{if } j(E) \neq 0, 1728;\\
4, & \textrm{if } j(E) = 1728;\\
6, & \textrm{if } j(E) = 0.
\end{array}\right.
$$

\item If $j(E) = 1728$ (resp. $0$), we have $\Aut_0(E)(k) = \ZZ/4\ZZ$ (resp. $\ZZ/6\ZZ$) if and only if $\omega_4 \in k$ (resp. $\omega_3 \in k$).
\item The translation by $P_0 \in E$ and the automorphism $\alpha \in \Aut_0(E)$ commute if and only if $\alpha(P_0) = P_0$. 
\end{enumerate}
\end{lemma}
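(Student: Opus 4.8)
The plan is to prove the four assertions about $\Aut(E)$ largely by reducing to the well-known structure theory of elliptic curves and then verifying the Galois-descent statements over the base field $k$. I will treat each part in turn, since the later parts build on the earlier ones.

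For part (a), the key point is that any automorphism $\phi$ of $E$ as a curve (not necessarily fixing the origin) can be written uniquely as a translation followed by an origin-preserving automorphism. Concretely, given $\phi \in \Aut(E)$, set $P_0 = \phi(O)$ where $O$ is the origin; then $\tau_{-P_0} \circ \phi$ fixes $O$ and hence lies in $\Aut_0(E)$. This gives the splitting of the sequence, with $\pi(\phi) = \tau_{-P_0}\circ \phi$ playing the role of the retraction onto $\Aut_0(E)$, and $i$ the inclusion of the translation subgroup $E \hookrightarrow \Aut(E)$, $P \mapsto \tau_P$. I would verify that the translations form a normal subgroup: for $\alpha \in \Aut_0(E)$ one checks $\alpha \circ \tau_P \circ \alpha^{-1} = \tau_{\alpha(P)}$, which simultaneously establishes normality and will be reused for part (d). All of this is standard and can be cited from a reference such as Silverman's \emph{Arithmetic of Elliptic Curves}, III.\S 9 and III.10; I would state it carefully over $k$ to make sure the maps are defined over $k$.

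For part (b), the isomorphism $\Aut_0(E) \cong \mu_n$ is the classical computation of the automorphism group of an elliptic curve fixing the origin: it is cyclic of order $2$, $4$, or $6$ according to whether $j(E) \neq 0,1728$, $j(E)=1728$, or $j(E)=0$, and the generator acts on the invariant differential (equivalently on the tangent space at $O$) by a primitive $n$-th root of unity, which gives the natural identification with $\mu_n$ as a group scheme over $k$. This is again standard (Silverman III.10.1) and I would simply cite it, being careful that the identification with $\mu_n$ respects the Galois action so that part (c) follows cleanly.

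Part (c) is where the only genuine content over a non-closed field appears, and I expect it to be the main (though still mild) obstacle. The statement is that for $j(E)=1728$ one has $\Aut_0(E)(k) \cong \ZZ/4\ZZ$ iff $\omega_4 \in k$, and for $j(E)=0$ one has $\Aut_0(E)(k)\cong\ZZ/6\ZZ$ iff $\omega_3 \in k$. Using the identification $\Aut_0(E)\cong\mu_n$ from part (b), the group of $k$-rational automorphisms is $\mu_n(k)$, which is cyclic of order equal to the number of $n$-th roots of unity in $k$. Thus $\mu_4(k) = \ZZ/4\ZZ$ exactly when $k$ contains a primitive $4$th root of unity $\omega_4$, and $\mu_6(k)=\ZZ/6\ZZ$ exactly when $k$ contains a primitive $6$th root of unity, equivalently a primitive cube root $\omega_3$ (since $-\omega_3$ generates $\mu_6$ and $\mathrm{char}\,k = 0$). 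The point requiring care is that the natural $\mu_n$-action really is defined over $k$ and that the Galois action on $\Aut_0(E)(\ol k)$ matches the Galois action on $\mu_n(\ol k)$; granting the naturality asserted in (b), this is immediate. Finally, part (d) is a direct computation: from the conjugation formula $\alpha\circ\tau_{P_0}\circ\alpha^{-1} = \tau_{\alpha(P_0)}$ established in part (a), the automorphisms $\tau_{P_0}$ and $\alpha$ commute precisely when $\tau_{\alpha(P_0)} = \tau_{P_0}$, i.e.\ when $\alpha(P_0) = P_0$, so no further work is needed.
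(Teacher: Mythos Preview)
Your proposal is correct and follows essentially the same approach as the paper: cite Silverman for parts (a) and (b), deduce (c) immediately from the identification $\Aut_0(E)\cong\mu_n$ in (b), and prove (d) by the direct computation $\alpha\circ\tau_{P_0} = \tau_{\alpha(P_0)}\circ\alpha$. The only cosmetic difference is that the paper cites \S X.5 of Silverman for (a) rather than III.\S 9--10, and writes out (d) as $\alpha(P)+\alpha(P_0)=\alpha(P)+P_0$ instead of via the conjugation formula you derived.
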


\begin{proof}
(a) See, e.g., \cite[\S X.5]{Sil09}. Note that in \cite{Sil09}, $\Aut(E)$ and $\Aut_0(E)$ are denoted by $\textrm{Isom}(E)$ and $\Aut(E)$, respectively.

(b) See \cite[Cor. III.10.2]{Sil09}.

(c) This follows directly from part (c).

(d) Let $\tau_{P_0}$ denote the translation by $P_0$. Then note that $\tau_{P_0}$ and $\alpha$ commute if and only if $\alpha(P) + \alpha(P_0) = \alpha \circ \tau_{P_0} (P) = \tau_{P_0} \circ \alpha (P) = \alpha(P) + P_0$
for all $P \in E$, which implies the desired result. 
\end{proof}

\begin{theorem} \label{Thm_Incompressible_No_Action_Genus0}
Suppose that $G$ cannot act faithfully on a curve of genus $0$ via $k$-morphisms. Then there exists a strongly incompressible $G$-curve. 
\end{theorem}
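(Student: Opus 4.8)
The plan is to reduce to Lemma~\ref{Lemma_NoCompression_Small_Genus} and then analyze, case by case, the possible genus of a compression target. By that lemma it suffices to produce a single faithful $G$-curve that admits no $G$-compression to a $G$-curve of genus $\le 1$. I would take $X$ to be the curve furnished by Theorem~\ref{Theorem_Curve_FixedPoints}, so that every element of $G$ fixes some geometric point of $X$. The crucial elementary observation is that this fixed-point property is inherited by any compression target: if $f\colon X \to Y$ is a $G$-equivariant surjective morphism and $g \in G$ fixes $x \in X(\ol{k})$, then $g$ fixes $f(x) \in Y(\ol{k})$. Thus any faithful $G$-curve $Y$ receiving a $G$-compression from $X$ also has the property that every element of $G$ fixes a geometric point. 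It therefore remains to show that no faithful $G$-curve of genus $0$ or $1$ can enjoy this property, and I would treat the two genera separately.

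The genus $0$ case is immediate: a compression target of genus $0$ would be a faithful $G$-curve of genus $0$ defined over $k$, contradicting the hypothesis outright. No appeal to the fixed-point property is even needed here.

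The genus $1$ case is the heart of the argument and the step I expect to be the main obstacle, since one must work over $k$ rather than merely over $\ol{k}$. Suppose $Y$ is a faithful $G$-curve of genus $1$ in which every element of $G$ fixes a geometric point. I would pass to the Jacobian $E = \mathrm{Jac}(Y)$, an elliptic curve over $k$; functoriality produces a homomorphism $\rho\colon G \to \Aut_0(E)$ defined over $k$ whose kernel consists of those automorphisms acting on $Y_{\ol{k}}$ by translations. Invoking the split exact sequence of Lemma~\ref{Lemma_Facts_Elliptic}(a), the point is that a nontrivial translation of a genus $1$ curve has no fixed geometric point; hence any $g \in \Ker(\rho) \cap G$ would be a fixed-point-free translation possessing a fixed point, forcing $g = 1$. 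Therefore $\rho|_G$ is injective, and by Lemma~\ref{Lemma_Facts_Elliptic}(b) it embeds $G$ into $\Aut_0(E)(k) \cong \mu_n(k)$ with $n \in \{2,4,6\}$.

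Finally I would extract the contradiction from this embedding. Since $\mu_n(k)$ is a finite subgroup of $k^{\times}$, the group $G$ is cyclic, say of order $m$, and $k$ must contain a primitive $m$-th root of unity $\omega_m$. But then the formula $z \mapsto \omega_m z$ defines a faithful action of $\ZZ/m\ZZ \cong G$ on $\PP^1$ over $k$, contradicting the hypothesis that $G$ cannot act faithfully on a curve of genus $0$. This rules out the genus $1$ case, completes the verification that $X$ admits no $G$-compression to a curve of genus $\le 1$, and the theorem follows from Lemma~\ref{Lemma_NoCompression_Small_Genus}.
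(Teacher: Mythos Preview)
Your proof is correct and follows essentially the same route as the paper: reduce via Lemma~\ref{Lemma_NoCompression_Small_Genus}, take $X$ from Theorem~\ref{Theorem_Curve_FixedPoints}, rule out genus $0$ by hypothesis, and in genus $1$ show that $G$ would have to inject into $\Aut_0$ of an elliptic curve, forcing $G$ to be cyclic of order $1,2,3,4$ or $6$ and hence to act on $\PP^1$. The only cosmetic difference is that the paper base-changes to $\ol{k}$ and works with $\Aut(E_{\ol{k}})$ directly, whereas you pass through the Jacobian to obtain the map $G \to \Aut_0(E)(k)$ over $k$; both lead to the same contradiction.
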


\begin{proof}
By Lemma \ref{Lemma_NoCompression_Small_Genus}, it suffices to prove that there exists a faithful $G$-curve $X$ that cannot be $G$-compressed to any curve of genus $1$.

Note that G is not isomorphic to $\ZZ/n\ZZ$ for $n = 1, 2, 3, 4, 6$, because these groups act faithfully on $\PP^1$ over $k$ (see, e.g., \cite{Beau10}). By Theorem \ref{Theorem_Curve_FixedPoints}, there exists a faithful $G$-curve $X$ such that every $g \in G$ fixes a geometric point of $X$. For the sake of contradiction, suppose that there exists a $G$-compression $X \to E$, where $E$ is a curve of genus $1$ endowed with a faithful $G$-action. Extending to the algebraic closure, we obtain a $G$-compression $X_{\ol{k}} \to E_{\ol{k}}$. Regard $G$ as a subgroup of $\Aut(E_{\ol{k}})$. By the exact sequence \eqref{Eq_Exact_Seq_Elliptic} and the fact that $G \not\cong \ZZ/n\ZZ$ for $n=1, 2, 3, 4, 6$; we conclude that $G \cap i(E_{\ol{k}}) \neq \emptyset$. Since $i(E_{\ol{k}})$ acts on $E_{\ol{k}}$ by translations, $G \cap i(E_{\ol{k}})$ acts freely on $E_{\ol{k}}$. However, every element of $G$ must fix a point on $E_{\ol{k}}$ by our assumption on $X_{\ol{k}}$. This contradiction shows that $X$ cannot be $G$-compressed to any $G$-curve of genus $1$. 
\end{proof}

In view of the above theorem, it remains to study the existence of strongly incompressible $G$-curves when $G$ can act faithfully on a curve of genus $0$. We will devote Section \ref{Sect_Equiv_ProjSpaces} to the study of equivariant rational maps to projective spaces, and we will use these results to understand compressions onto curves of genus $0$.

\section{Equivariant maps to projective spaces}\label{Sect_Equiv_ProjSpaces}

Let $G$ be an algebraic group defined over a field $k$. A projective representation $\rho\colon G \hookrightarrow \PGL(V)$ gives rise to a $G$-action on $\PP(V)$. We will denote the resulting $G$-variety by $\fourIdx{}{\rho}{}{}{\PP(V)}$. If $\rho$ and $\sigma$ are projective $G$-representations, it is clear that $\fourIdx{}{\rho}{}{}{\PP(V)}$ and $\fourIdx{}{\sigma}{}{}{\PP(V)}$ are $G$-equivariantly isomorphic if and only if $\rho$ and $\sigma$ are conjugate. In what follows, we always assume that the $G$-action on $\fourIdx{}{\rho}{}{}{\PP(V)}$ is generically free.

Consider the commutative diagram whose rows are central exact sequences
\begin{equation}\label{Eq_Exact_seq} 
\xymatrix{1 \ar[r] & \GG_m \ar[r] \ar@{=}[d] & \GL(V) \ar[r]  & \PGL(V) \ar[r] & 1\\
1 \ar[r] & \GG_m \ar[r] & G' \ar@{^{ (}->}[u]_-{\ol{\rho}} \ar[r] & G \ar@{^{ (}->}[u]_-{\rho} \ar[r] & 1}
\end{equation}
where $G'$ is the full preimage of $G$ in $\GL(V)$. Given a field extension $K/k$, we obtain the corresponding diagram in cohomology
\begin{equation} \label{Eq_Exact_seq_cohom}
\xymatrix{ & 1 \ar[r]  & H^1(K,\PGL(V)) \ar[r] & H^2(K,\GG_m)\\
1 \ar[r] & H^1(K,G') \ar[u]_-{\ol{\rho}_*} \ar[r]^-{\varphi} & H^1(K,G) \ar[u]_-{\rho_*} \ar[r]^-{\Delta_{\rho}} & H^2(K,\GG_m) \ar@{=}[u]}
\end{equation}
(Note that $H^1(K,\GG_m)$ and $H^1(K,\GL(V))$ are trivial by Hilbert's Theorem 90.) This construction defines a cohomological invariant $\Delta_\rho \colon H^1(K,G) \to H^2(K,\GG_m) = \Br(K)$. If $X$ is a generically free primitive $G$-variety and $L = k(X)^G$, we denote the Brauer class associated to $[X] \in H^1(L,G)$ by $\Delta_{\rho}(X)$. Note that $\Delta_\rho(X)$ is trivial if and only if $[X]$ lifts to a $G'$-torsor $[X'] \in H^1(L,G')$.

\begin{constr}\label{Construction_Cone}
Let $Y$ be a primitive closed $G$-subvariety of $\fourIdx{}{\rho}{}{}{\PP(V)}$. Endow $V$ with a linear $G'$-action via $\ol{\rho}$ and define $\widetilde{Y} \subset V$ to be the affine cone over $Y$ {\em with the origin removed}. It is not hard to see that $\widetilde{Y}$ is a primitive $G'$-variety. Moreover, it is well known that $\widetilde{Y}$ is a $\GG_m$-torsor and $Y$ is isomorphic to the {\em geometric} quotient $\widetilde{Y}/\GG_m$. Note also that the group $G'/\GG_m \cong G$ acts naturally on $\widetilde{Y}/\GG_m$, in such a way that the above isomorphism is $G$-equivariant.
\end{constr}

\begin{lemma} \label{Lemma_Class_PV}
Let $Y$ be a generically free primitive closed $G$-subvariety of $\fourIdx{}{\rho}{}{}{\PP(V)}$. Then $\Delta_{\rho}(Y)$ is trivial.
\end{lemma}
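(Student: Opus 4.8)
The plan is to exhibit an explicit lift of the torsor class $[Y]$ to $H^1(L,G')$, where $L = k(Y)^G$, and then to read off triviality of $\Delta_\rho(Y)$ from the exactness of the bottom row of \eqref{Eq_Exact_seq_cohom}. The natural candidate for this lift is the punctured affine cone $\widetilde{Y} \subset V$ produced in Construction \ref{Construction_Cone}, on which $G'$ acts linearly via $\ol{\rho}$.

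First I would check that $\widetilde{Y}$ is a generically free primitive $G'$-variety, so that its class $[\widetilde{Y}]$ is defined. Primitivity is already recorded in Construction \ref{Construction_Cone}. For generic freeness, note that $\GG_m \subset G'$ acts freely on $\widetilde{Y}$, since the origin has been removed and $\GG_m$ acts by scaling. If $g' \in G'$ fixes a geometric point of a suitable dense open subset of $\widetilde{Y}$, then its image in $G \cong G'/\GG_m$ fixes the corresponding point of $Y \cong \widetilde{Y}/\GG_m$; generic freeness of $Y$ forces this image to be trivial, so $g' \in \GG_m$, and freeness of the $\GG_m$-action then gives $g' = 1$. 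Thus the generic $G'$-stabilizer is trivial. Next I would match base fields: since $\widetilde{Y}/\GG_m \cong Y$ $G$-equivariantly and $G'/\GG_m \cong G$, taking $G$-quotients yields $\widetilde{Y}/G' \cong (\widetilde{Y}/\GG_m)/(G'/\GG_m) \cong Y/G$, whence $k(\widetilde{Y})^{G'} \cong k(Y)^G = L$ and $[\widetilde{Y}] \in H^1(L,G')$.

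The key step is to identify $\varphi([\widetilde{Y}])$ with $[Y]$ in $H^1(L,G)$, where $\varphi$ is induced by $G' \to G$. I would argue this by comparing generic fibers over $\Spec(L)$, the generic point of $Y/G \cong \widetilde{Y}/G'$: the torsor $\widetilde{Y}$ restricts there to a $G'$-torsor $T'$ representing $[\widetilde{Y}]$, and contracting $T'$ along $G' \to G$ — equivalently, forming its $\GG_m$-quotient — recovers exactly the $G$-torsor representing $[Y]$, because the geometric $\GG_m$-quotient of $\widetilde{Y}$ is $Y$. Reconciling the \emph{geometric} $\GG_m$-quotient of Construction \ref{Construction_Cone} with the rational quotient used to define the classes $[\,\cdot\,]$, and checking that the two are compatible over the generic point, is the main technical point; everything else is formal.

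Finally, exactness of the bottom row of \eqref{Eq_Exact_seq_cohom} at $H^1(L,G)$ shows that the image of $\varphi$ equals the kernel of $\Delta_\rho$. Since $[Y] = \varphi([\widetilde{Y}])$ lies in this image, we conclude $\Delta_\rho(Y) = \Delta_\rho([Y]) = 0$. Equivalently, by the remark following \eqref{Eq_Exact_seq_cohom}, the class $[Y]$ lifts to the $G'$-torsor $[\widetilde{Y}]$, so $\Delta_\rho(Y)$ is trivial.
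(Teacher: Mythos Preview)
Your proposal is correct and follows essentially the same approach as the paper: lift $[Y]$ to $[\widetilde{Y}] \in H^1(L,G')$ via the punctured affine cone of Construction~\ref{Construction_Cone}, check that $\widetilde{Y}$ is generically free and primitive over $G'$, and conclude from $\varphi([\widetilde{Y}]) = [\widetilde{Y}/\GG_m] = [Y]$ and exactness. The paper compresses your generic-freeness and quotient-identification steps into a single sentence each, but the underlying argument is identical.
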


\begin{proof}
We need to show that $[Y]$ is in the image of the map $\varphi\colon H^1(K,G') \to H^1(K,G)$, where $K = k(Y)^G$. Let $\widetilde{Y}$ be as in Construction \ref{Construction_Cone}. If $x \in Y$ has trivial stabilizer in $G$, then any lift $\widetilde{x} \in \widetilde{Y}$ of $x$ has trivial stabilizer in $G'$. It follows that $\widetilde{Y}$ is a generically free primitive $G'$-variety and clearly $\varphi([\widetilde{Y}]) = [\widetilde{Y}/\GG_m] = [Y]$.
\end{proof}

\begin{prop} \label{Prop_Maps_PV}
Let $G$ be a finite group, let $\rho\colon G \hookrightarrow \PGL(V)$ be a projective representation and let $X$ be a faithful primitive $G$-variety.
\begin{enumerate}[\upshape(a)]
\item Suppose that there exists a $G$-equivariant rational map $f\colon X \dashrightarrow \fourIdx{}{\rho}{}{}{\PP(V)}$. Then $\Delta_{\rho}(X)$ is trivial.
\item Conversely, suppose that $\Delta_{\rho}(X)$ is trivial. Then, given any $G$-invariant open subset $U \subset \fourIdx{}{\rho}{}{}{\PP(V)}$, there exists a $G$-equivariant rational map $X \dashrightarrow U$. 
\end{enumerate}
\end{prop}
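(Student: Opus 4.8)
The statement relates the existence of a $G$-equivariant rational map $X \dashrightarrow \fourIdx{}{\rho}{}{}{\PP(V)}$ to the vanishing of the Brauer-class invariant $\Delta_\rho(X)$. The two parts go in opposite directions, and my plan is to treat them separately while exploiting the cone construction (Construction \ref{Construction_Cone}) and the exact sequence \eqref{Eq_Exact_seq_cohom} as the common machinery. Throughout I would set $K = k(X)^G$, so that $[X] \in H^1(K,G)$ is defined.

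For part (a), the plan is to reduce the hypothesis to a statement about torsor classes. A $G$-equivariant dominant rational map $f\colon X \dashrightarrow \fourIdx{}{\rho}{}{}{\PP(V)}$ has image a primitive closed $G$-subvariety $Y \subseteq \fourIdx{}{\rho}{}{}{\PP(V)}$; since the map is $G$-equivariant and $X$ is faithful, $Y$ is generically free, so Lemma \ref{Lemma_Class_PV} gives $\Delta_\rho(Y)$ trivial. I then need to transfer this vanishing from $Y$ to $X$. The key point is that $f$ induces an inclusion of $G$-invariant function fields $k(Y)^G \hookrightarrow k(X)^G = K$, i.e.\ a $k$-morphism $\Spec K \to \Spec k(Y)^G$, under which $[X]$ is the image of $[Y]$ by functoriality of $H^1(-,G)$. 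Since $\Delta_\rho$ is a cohomological invariant (hence compatible with such field extensions via the naturality of the connecting map $\Delta_\rho$ in \eqref{Eq_Exact_seq_cohom}), we get $\Delta_\rho(X) = \mathrm{res}_{K/k(Y)^G}\,\Delta_\rho(Y) = 0$.

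For part (b), the hypothesis $\Delta_\rho(X) = 0$ means, by the exactness of the bottom row of \eqref{Eq_Exact_seq_cohom}, that $[X]$ lifts to a class $[X'] \in H^1(K,G')$ with $\varphi([X']) = [X]$. The plan is to realize this lift geometrically: the $G'$-torsor $X'$ corresponds to a generically free primitive $G'$-variety with $k(X')^{G'} = K$. I want to produce from $X'$ a $G$-equivariant rational map into the prescribed open set $U \subseteq \fourIdx{}{\rho}{}{}{\PP(V)}$. The natural vehicle is the linear $G'$-representation $V$ (via $\ol\rho$): a $G'$-equivariant dominant rational map $X' \dashrightarrow V$ descends, after removing the origin and passing to $\GG_m$-quotients as in Construction \ref{Construction_Cone}, to a $G$-equivariant rational map $X = X'/\GG_m \dashrightarrow \PP(V)$. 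To land inside $U$, I would use that any dense open $G$-invariant subset of $\PP(V)$ is hit by a dominant map after composing with a suitable $G$-translate or shrinking the domain, together with the fact that on the level of generic points one has freedom to move the image by the $\GG_m$-scaling.

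The main obstacle I expect is part (b): producing a $G'$-equivariant rational map $X' \dashrightarrow V$ that is dominant (so that the induced map to $\PP(V)$ is genuinely a rational map of varieties and not constant) and whose image can be arranged to meet the given open $U$. The existence of \emph{some} nonzero $G'$-equivariant rational map to $V$ should follow from a no-name / versality argument: a generically free $G'$-variety admits an equivariant rational map to any generically free linear representation, after possibly stabilizing with a complementary representation, and $V$ carries the linear $G'$-action via $\ol\rho$. The delicate bookkeeping is to ensure the resulting map to $\PP(V)$ is dominant onto a subvariety meeting $U$, rather than collapsing into the complement $\PP(V)\setminus U$; I would handle this by noting that $U$ is dense, that the image is a single $G$-orbit closure of positive dimension, and that one may compose with a generic element of $\GG_m$ (or translate) to avoid the closed complement.
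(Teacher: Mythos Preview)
Your treatment of part (b) is essentially the paper's argument: lift $[X]$ to a $G'$-torsor $[X']$, use that $V$ is a versal linear $G'$-variety to get a $G'$-equivariant rational map $X' \dashrightarrow \pi_V^{-1}(U)$, and pass to $\GG_m$-quotients. One simplification: versality already lets you map $X'$ into \emph{any} dense $G'$-invariant open subset of $V$, in particular into $\pi_V^{-1}(U)$, so there is no ``delicate bookkeeping'' needed to land in $U$; the concern about dominance is also misplaced, since the statement only asks for a rational map with image in $U$, not a dominant one.

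Part (a), however, has a genuine gap. You write that the image $Y = \overline{f(X)}$ is generically free ``since the map is $G$-equivariant and $X$ is faithful,'' but this inference is false: nothing prevents $f$ from collapsing $X$ onto a $G$-subvariety of $\fourIdx{}{\rho}{}{}{\PP(V)}$ on which $G$ acts with a nontrivial kernel $H$ (for instance, a $G$-fixed point). In that case Lemma \ref{Lemma_Class_PV} does not apply to $Y$, and the functoriality step breaks down because $f$ is not a $G$-compression. The paper handles exactly this situation in a separate Case 2, and the argument is not short: one shows that the preimage $\pi^{-1}(H) \subset G'$ splits as $\GG_m \times H'$ (where $H'$ is the kernel of the $G'$-action on the cone $\widetilde{Y}$), by exhibiting a canonical section built from the scalar by which each lift $h' \in \pi^{-1}(H)$ acts on $\widetilde{Y}$. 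This splitting yields a compatible pair of quotient exact sequences for $G'/H'$ and $G/H$, and one then chases the class $[X/H]$ through the resulting cohomology diagrams to conclude that $\Delta_\rho(X)$ vanishes. Your proposal omits this case entirely.
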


\begin{proof}
(a) We write $Y = \ol{f(X)}$, $K = k(Y)^G$ and $L = k(X)^G$. We separate the proof into two cases.

Case 1: Suppose that $Y$ is a faithful $G$-variety. This case follows from the fact that $\Delta_\rho$ is a cohomological invariant. The $G$-compression $f\colon X \dashrightarrow Y$ naturally induces a $k$-field homomorphism $i\colon K \hookrightarrow L$ and we have a commutative diagram
\begin{equation} 
\xymatrix{H^1(K,G) \ar[r]^-{\Delta^K_{\rho}} \ar[d]_-{i_*} & H^2(K,\GG_m) \ar[d] \\
H^1(L,G) \ar[r]^-{\Delta^L_{\rho}}  & H^2(L,\GG_m)
}
\end{equation}
It is well known that in the above situation, we must have $i_*([Y]) = [X]$.
By Lemma \ref{Lemma_Class_PV}, we have $\Delta^K_{\rho}(Y) = 1$. The commutativity of the above diagram then implies that $\Delta^L_{\rho}(X) = 1$.

Case 2: Suppose that the $G$-action on $Y$ has a kernel $H$. Let $\widetilde{Y}$ be as in Construction \ref{Construction_Cone}, and let $H'$ be the kernel of the $G'$-action on $\widetilde{Y}$. We claim that $\pi^{-1}(H)$ splits as $\GG_m \times H'$, where $\pi\colon G' \to G$ is the natural projection. Indeed, let $h \in H$ and let $h' \in \pi^{-1}(h)$ be any lift. Note that for any $\widetilde{y} \in \widetilde{Y}$, there exists $\lambda_{h'}(\widetilde{y}) \in \GG_m$ such that $h' \cdot \widetilde{y} = \lambda_{h'}(\widetilde{y}) \widetilde{y}$. Moreover, the map $\widetilde{y} \mapsto \lambda_{h'}(\widetilde{y})$ is the composition
\begin{equation*}
\widetilde{Y} \to G' \times \widetilde{Y} \to \widetilde{Y} \times \widetilde{Y} \to \GG_m
\end{equation*}
sending $\widetilde{y} \mapsto (h',\widetilde{y}) \mapsto (\widetilde{y},h'\cdot \widetilde{y}) \mapsto \lambda_{h'}(\widetilde{y})$, where the last map is the ``difference'' morphism of the $\GG_m$-torsor $\widetilde{Y}$. We have thus constructed a morphism $\widetilde{Y} \to \GG_m$, which is clearly invariant under the $\GG_m$-action of $\widetilde{Y}$. By properties of geometric quotients, the above morphism descends to a morphism $Y \to \GG_m$. Since $Y$ is projective, this morphism should be constant. From now on, we denote $\lambda_{h'}(\widetilde{y})$ simply by $\lambda_{h'}$. Note that $\lambda_{h'}^{-1} h'$ is the only element in $\pi^{-1}(h)$ that lies in $H'$. It follows that the section $s\colon H \to \pi^{-1}(H)$ given by $h \to \lambda_{h'}^{-1} h'$ is a well-defined homomorphism satisfying $s(H) = H'$. Hence the exact sequence $1 \to \GG_m \to \pi^{-1}(H) \to H \to 1$ splits in the desired way. This finishes the proof of the claim.

It follows that we have a commutative diagram with exact rows
$$
\xymatrix{
1 \ar[r] & \GG_m \ar[r] \ar@{=}[d] & G' \ar[r]^-\pi \ar[d] & G \ar[r] \ar[d] & 1\\
1 \ar[r] & \GG_m \ar[r] & G'/H' \ar[r] & G/H \ar[r] & 1
}
$$
The dominant $G$-equivariant rational map $X \dashrightarrow Y$ induces a $G/H$-compression $X/H \to Y$, which gives rise to a $k$-field homomorphism $i \colon K \hookrightarrow L$. Using the bottom sequence above, we obtain a commutative diagram in cohomology
$$
\xymatrix{H^1(K,G'/H') \ar[r] \ar[d] & H^1(K,G/H) \ar[r] \ar[d]_-{i_*} & H^2(K,\GG_m) \ar[d] \\
H^1(L,G'/H') \ar[r] & H^1(L,G/H) \ar[r]  & H^2(L,\GG_m)
} 
$$
The $G/H$-variety $Y$ represents a class $[Y] \in H^1(K,G/H)$, which maps to $[X/H] \in H^1(L,G/H)$ under $i_*$. It is easy to see that the $G'/H'$-action on $\widetilde{Y}$ is generically free, so it follows that $[Y]$ comes from a class in $H^1(K,G'/H')$ and therefore its image in $H^2(K,\GG_m)$ is trivial. By the commutativity of the above diagram, the image of $[X/H]$ in $H^2(L,\GG_m)$ is also trivial.

To complete the proof of Case 2, note that we have a commutative diagram
$$
\xymatrix{H^1(L,G') \ar[r] \ar[d] & H^1(L,G) \ar[r]^{\Delta_{\rho}} \ar[d] & H^2(L,\GG_m) \ar@{=}[d] \\
H^1(L,G'/H') \ar[r] & H^1(L,G/H) \ar[r]  & H^2(L,\GG_m)
}
$$ 
The image of $[X] \in H^1(L,G)$ under the middle vertical map is precisely $[X/H]$. It thus follows that $\Delta_{\rho}(X)$ is trivial.

(b) By assumption, $[X]$ can be lifted to a class in $H^1(L,G')$, i.e., there exists a generically free primitive $G'$-variety $X'$ such that $X'/\GG_m$ is birationally isomorphic to $X$ as a $G$-variety. Without loss of generality, we may identify $X'/\GG_m$ with $X$. 

We may view $V$ as a generically free linear $G'$-variety and the natural projection $\pi_V\colon V \dashrightarrow \fourIdx{}{\rho}{}{}{\PP(V)}$ as a rational quotient map.
Let $U' = \pi_V^{-1}(U)$, which is clearly a $G'$-invariant open subset of $V$. Note that $V$ is a versal $G'$-variety (see, e.g., \cite[Example 5.4]{GMS03}), whence there exists a $G'$-equivariant rational map $X' \dashrightarrow U'$. Taking quotients by $\GG_m$, we obtain a $G$-equivariant rational map $X = X'/\GG_m \dashrightarrow U'/\GG_m = U$.
\end{proof}

We record the following corollary for future reference. 

\begin{cor} \label{Cor_Maps_P1}
Let $\rho\colon G \hookrightarrow \PGL_2$ be a projective representation of a nontrivial finite group $G$ and let $X$ be a faithful irreducible $G$-variety. Then there exists a $G$-compression $X \dashrightarrow \fourIdx{}{\rho}{}{}{\PP^1}$ if and only if $\Delta_{\rho}(X) = 1$.
\end{cor}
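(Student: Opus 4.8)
The plan is to derive Corollary \ref{Cor_Maps_P1} as a direct specialization of Proposition \ref{Prop_Maps_PV} to the case $V = k^2$, so that $\PGL(V) = \PGL_2$ and $\fourIdx{}{\rho}{}{}{\PP(V)} = \fourIdx{}{\rho}{}{}{\PP^1}$. The two implications correspond exactly to the two parts of the Proposition, so most of the work consists of checking that the hypotheses match and that the projective line introduces no complications.

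First I would establish the forward direction. Suppose a $G$-compression $f\colon X \dashrightarrow \fourIdx{}{\rho}{}{}{\PP^1}$ exists. Here I must confirm that the standing hypotheses of Section \ref{Sect_Equiv_ProjSpaces} are in force: namely that the $G$-action on $\fourIdx{}{\rho}{}{}{\PP^1}$ is generically free. Since $G$ is a nontrivial finite group and $\rho$ is a projective representation (hence injective, as indicated by the notation $\rho\colon G \hookrightarrow \PGL_2$), a faithful $G$-action on a curve is automatically generically free because only finitely many points can be fixed by any given nonidentity element; thus the generic stabilizer is trivial. With this in hand, Proposition \ref{Prop_Maps_PV}(a) applies verbatim and yields $\Delta_\rho(X) = 1$. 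I would note that $X$ being an irreducible faithful $G$-variety is precisely the primitivity and faithfulness required by the Proposition (an irreducible variety is automatically primitive).

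For the converse, assume $\Delta_\rho(X) = 1$. Proposition \ref{Prop_Maps_PV}(b) then guarantees that for \emph{any} $G$-invariant open subset $U \subset \fourIdx{}{\rho}{}{}{\PP^1}$ there is a $G$-equivariant rational map $X \dashrightarrow U \subset \fourIdx{}{\rho}{}{}{\PP^1}$. To obtain a genuine $G$-compression rather than a merely rational map, I would take $U$ to be the (nonempty, $G$-invariant) open locus where the $G$-action is free, so that the image of the resulting map lands in a faithful $G$-variety; composing with the inclusion $U \hookrightarrow \fourIdx{}{\rho}{}{}{\PP^1}$ produces the desired dominant equivariant map. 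The only subtlety is dominance: since the target is the curve $\PP^1$ and $X$ is positive-dimensional with $G$ acting faithfully, the image is a $G$-invariant subvariety of $\PP^1$ of dimension at least one (it cannot be a finite set of points if the map is $G$-equivariant and $G$ acts faithfully on the one-dimensional target), hence all of $\PP^1$; this shows the map is dominant and therefore a $G$-compression.

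The main obstacle, if any, is conceptual bookkeeping rather than a hard estimate: one must be careful that the word ``$G$-compression'' in the statement of the corollary refers to a dominant map onto $\fourIdx{}{\rho}{}{}{\PP^1}$ itself (a faithful target), whereas Proposition \ref{Prop_Maps_PV} is phrased in terms of rational maps into $\PP(V)$ whose image may a priori be a proper subvariety with a kernel. The reconciliation—already handled inside the Proposition's Case~2—is that triviality of $\Delta_\rho(X)$ is insensitive to whether the image is faithful, so the equivalence is clean. I expect the entire argument to be short, essentially a citation of Proposition \ref{Prop_Maps_PV} together with the remark that generic freeness is automatic for faithful actions of nontrivial finite groups on irreducible curves and their one-dimensional targets.
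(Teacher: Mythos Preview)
Your proposal is correct and follows essentially the same approach as the paper: cite Proposition~\ref{Prop_Maps_PV}(a) for the forward direction, and for the converse choose a $G$-invariant open $U \subset \fourIdx{}{\rho}{}{}{\PP^1}$ avoiding the $G$-fixed points (the paper removes the finitely many fixed points directly, you take the free locus---either works) so that the irreducible image cannot collapse to a point. One small quibble: your parenthetical justifies non-collapse by saying ``$G$ acts faithfully on the one-dimensional target,'' but faithfulness alone does not preclude $G$-fixed points (e.g., $\ZZ/2\ZZ$ acting on $\PP^1$ by $x \mapsto 1/x$ fixes $\pm 1$); the correct reason---which you have already set up---is that the image lands in $U$, and $U$ contains no $G$-fixed points, so an irreducible $G$-invariant image closure cannot be a single point.
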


\begin{proof}
The ``only if'' part follows directly from Proposition \ref{Prop_Maps_PV}(a). On the other hand, suppose that $\Delta_{\rho}(X) = 1$. Since $G$ is nontrivial, $\fourIdx{}{\rho}{}{}{\PP^1}$ has a finite number of $G$-fixed points. Therefore, we can find a $G$-invariant open $U \subset \fourIdx{}{\rho}{}{}{\PP^1}$ not containing any $G$-fixed points. By Proposition \ref{Prop_Maps_PV}(b), there exists a $G$-equivariant rational map $f\colon X \dashrightarrow \fourIdx{}{\rho}{}{}{\PP^1}$ such that $f(X) \subset U$. The closure $\overline{f(X)}$ is a $G$-invariant closed irreducible subset of $\fourIdx{}{\rho}{}{}{\PP^1}$. By construction, it cannot be a fixed point, so it coincides with $\fourIdx{}{\rho}{}{}{\PP^1}$ itself. This proves that $f$ is dominant.
\end{proof}

\section{Some explicit computations}
\label{Sect_Actions_Proj_Line}

In this section, we explicitly compute the invariant introduced in Section \ref{Sect_Equiv_ProjSpaces} for certain actions on the projective line. We will use these results later to study the strong incompressibility of $G$-curves in the case where $G$ acts faithfully on $\PP^1$. In what follows, the class of an element $a \in k^\times$ in $k^\times/k^{\times 2}$ will be denoted by $\ol{a}$.

Recall that the conjugacy classes of embeddings of $(\ZZ/2\ZZ)^2$ into $\PGL_2(k)$ are parametrized by the pairs $(\ol{a},\ol{b}) \in (k^\times/k^{\times 2})^2$ such that the quaternion algebra $(a,b)_2$ is split (see \cite{Beau10, Ga13}).  We denote the corresponding embedding by $\rho_{(a,b)}$ and fix generators $e_1, e_2$ of $(\ZZ/2\ZZ)^2$. We have the following three cases:
\begin{itemize}
\item Suppose that both $a$ and $b$ are non-squares. Then we have
\begin{equation} \label{Eq_embedding_Klein1}
\rho_{(a,b)}\colon e_1 \mapsto \left(\begin{array}{cc}
\lambda & -a\\
1 & -\lambda
\end{array}\right),\quad
e_2 \mapsto \left(\begin{array}{cc}
0 & a\\
1 & 0
\end{array}\right),
\end{equation}
where $\lambda^2-a \equiv b \mod k^{\times 2}$ (we can find such $\lambda \in k$ because $(a,b)_2$ is split).
\item Suppose that $\ol{a} = \ol{1}$.  Then we have
\begin{equation} \label{Eq_embedding_Klein2}
\rho_{(a,b)}\colon e_1 \mapsto \left(\begin{array}{cc}
0 & b\\
1 & 0
\end{array}\right),\quad
e_2 \mapsto \left(\begin{array}{cc}
-1 & 0\\
0 & 1
\end{array}\right).
\end{equation}
\item Suppose that $\ol{b} = \ol{1}$.  Then we have
\begin{equation} \label{Eq_embedding_Klein3}
\rho_{(a,b)}\colon e_1 \mapsto \left(\begin{array}{cc}
-1 & 0\\
0 & 1
\end{array}\right),\quad
e_2 \mapsto \left(\begin{array}{cc}
0 & a\\
1 & 0
\end{array}\right).
\end{equation}
\end{itemize} 
(If $(\ol{a},\ol{b}) = (\ol{1},\ol{1})$, the last two embeddings are conjugate.) For simplicity, denote the $(\ZZ/2\ZZ)^2$-variety $\fourIdx{}{\rho_{(a,b)}}{}{}{\PP^1}$ by $\fourIdx{}{(a,b)}{}{}{\PP^1}$. Clearly, $\fourIdx{}{(a,b)}{}{}{\PP^1}$ and $\fourIdx{}{(a',b')}{}{}{\PP^1}$ are isomorphic as $(\ZZ/2\ZZ)^2$-varieties if and only if $\ol{a} = \ol{a'}$ and $\ol{b} = \ol{b'}$.

\begin{lemma} \label{Lemma_Cohom_Kleingroup}
Let $\rho_{(a,b)}$ be as above, let $K/k$ be a field extension and identify $H^1(K,(\ZZ/2\ZZ)^2)$ with $(K^\times/K^{\times 2})^2$. Then $\Delta_{\rho_{(a,b)}}(\ol{c},\ol{d}) = [(ac,bd)_2]$ for all $c,d \in K^\times$.
\end{lemma}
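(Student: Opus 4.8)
The plan is to compute the connecting map $\Delta_{\rho_{(a,b)}}\colon H^1(K,(\ZZ/2\ZZ)^2) \to H^2(K,\GG_m) = \Br(K)$ directly from the central extension $1 \to \GG_m \to G' \to (\ZZ/2\ZZ)^2 \to 1$ of diagram \eqref{Eq_Exact_seq}, by writing down an explicit $2$-cocycle. I fix the lifts $A = \rho_{(a,b)}(e_1)$ and $B = \rho_{(a,b)}(e_2)$ in $\GL_2$ given by the matrices in \eqref{Eq_embedding_Klein1}--\eqref{Eq_embedding_Klein3}. Since $\mathrm{char}\,k = 0$, Kummer theory identifies a class $(\ol c,\ol d)$ with the pair of quadratic characters $\chi_c,\chi_d\colon \Gal(\ol K/K) \to \mu_2 = \{\pm 1\}$ attached to $c,d \in K^\times$, so the corresponding cocycle $\xi\colon \Gal(\ol K/K) \to (\ZZ/2\ZZ)^2$ is the homomorphism $\sigma \mapsto (\chi_c(\sigma),\chi_d(\sigma))$ (written additively). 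The key simplifying observation is that $A$ and $B$ have entries in $k$, so the set-theoretic section $s(e_1^m e_2^n) = A^m B^n$ takes Galois-fixed values; hence the connecting cocycle $(\sigma,\tau) \mapsto s(\xi_\sigma)\cdot {}^{\sigma} s(\xi_\tau)\cdot s(\xi_{\sigma\tau})^{-1}$ collapses to the pullback $\xi^\ast f$ of the factor set $f(g,h) = s(g)s(h)s(gh)^{-1} \in k^\times$ of the extension $G'$.

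The next step is to record the multiplicative relations among the lifts. A direct matrix computation in each of the three cases \eqref{Eq_embedding_Klein1}, \eqref{Eq_embedding_Klein2}, \eqref{Eq_embedding_Klein3} yields the uniform relations $A^2 \equiv b$, $B^2 \equiv a \pmod{k^{\times 2}}$ and $AB = -BA$. (For instance, in case \eqref{Eq_embedding_Klein1} one gets $A^2 = (\lambda^2 - a)I \equiv bI$, $B^2 = aI$, and $AB = -BA$; the other two cases are analogous, with the relevant square relation holding on the nose because the corresponding element of $k^\times$ is already a square.) Substituting these relations into $f$ and using $B^n A^{m'} = (-1)^{nm'} A^{m'} B^n$, I obtain for $g=(m,n)$, $h=(m',n')$ the factor set
\[
f(g,h) = (-1)^{n m'}\, b^{\,m m'}\, a^{\,n n'} \in k^\times .
\]

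Finally I pull back along $\xi$ and read off Brauer classes. Writing $m_\sigma = \chi_c(\sigma)$, $n_\sigma = \chi_d(\sigma) \in \{0,1\}$, the cocycle $\xi^\ast f(\sigma,\tau) = (-1)^{n_\sigma m_\tau}\, b^{m_\sigma m_\tau}\, a^{n_\sigma n_\tau}$ is a product of three standard symbol cocycles: $(-1)^{n_\sigma m_\tau}$ represents the cup product $\chi_d \cup \chi_c = [(c,d)_2]$, while $b^{m_\sigma m_\tau}$ and $a^{n_\sigma n_\tau}$ represent the cyclic-algebra classes $[(c,b)_2]$ and $[(d,a)_2]$. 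As the product of cocycles corresponds to the sum in $\Br(K)$, this gives $\Delta_{\rho_{(a,b)}}(\ol c,\ol d) = [(c,d)_2] + [(c,b)_2] + [(a,d)_2]$. Expanding $[(ac,bd)_2] = [(a,b)_2] + [(a,d)_2] + [(c,b)_2] + [(c,d)_2]$ by bilinearity of the symbol and using that $(a,b)_2$ is split (precisely the condition guaranteeing that $\rho_{(a,b)}$ exists), the term $[(a,b)_2]$ vanishes and the two expressions agree, proving the claim. The main obstacle I anticipate is the bookkeeping in the cocycle computation --- in particular, correctly identifying the ``mixed'' commutator term $(-1)^{n_\sigma m_\tau}$ (which encodes the non-additivity of $\Delta_{\rho_{(a,b)}}$) as the symbol $(c,d)_2$, and checking that all three embedding forms produce the same factor set; once these are settled, the identity with $[(ac,bd)_2]$ is immediate.
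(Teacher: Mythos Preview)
Your proof is correct and takes a genuinely different route from the paper's. The paper identifies $\mathrm{M}_2$ with the split quaternion algebra $(b,a)_2$ via $i\mapsto U$, $j\mapsto V$ (the lifts of $\rho_{(a,b)}(e_1),\rho_{(a,b)}(e_2)$), factors $\rho_{(a,b)}$ through $\PGL_1((b,a)_2)$, and then computes $\rho_{(a,b)\ast}(\ol c,\ol d)$ by \emph{Galois descent}: it twists the $\Gal(K(\sqrt c,\sqrt d)/K)$-action on $(b,a)_2\otimes_K K(\sqrt c,\sqrt d)$ by the cocycle $\sigma_1\mapsto[i]$, $\sigma_2\mapsto[j]$ and reads off that the fixed subalgebra is generated by $\sqrt d\,i$ and $\sqrt c\,j$, hence is $(bd,ac)_2\cong(ac,bd)_2$. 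You instead compute the connecting map as the pullback of the factor set of the central extension $1\to\GG_m\to G'\to(\ZZ/2\ZZ)^2\to 1$, using crucially that the chosen lifts $A,B$ lie in $\GL_2(k)$ so that the Galois twist ${}^\sigma s(\xi_\tau)$ disappears; you then decompose the resulting $2$-cocycle as a product of three standard symbol cocycles and invoke bilinearity together with the splitting of $(a,b)_2$. The paper's approach produces the single quaternion algebra $(ac,bd)_2$ directly and is closer in spirit to the ``twisted forms'' interpretation of $H^1(K,\PGL_n)$; your approach is more elementary (pure group-cohomology bookkeeping), avoids descent entirely, and has the pleasant feature of making the bilinear dependence on $(\ol c,\ol d)$ and the role of the hypothesis $[(a,b)_2]=0$ completely transparent.
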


\begin{proof}
It suffices to prove that $\rho_{(a,b)*}\colon (K^\times/K^{\times 2})^2 \to H^1(K,\PGL_2)$ maps $(\ol{c},\ol{d})$ to $(ac,bd)_2$. Let $U, V \in \GL_2$ be lifts of $\rho_{(a,b)}(e_1)$, $\rho_{(a,b)}(e_2)$, respectively. Note that $U^2 = b' I$, $V^2 = a' I$ and $UV + VU = 0$, where $\ol{a'} = \ol{a}$ and $\ol{b'} = \ol{b}$. Rescaling the lifts if necessary, we may assume that $a'=a$ and $b'=b$. Let $\mathcal{A}$ be the split quaternion algebra $(b,a)_2$. Note that there is a $k$-algebra isomorphism $\mathcal{A} \cong \mathrm{M}_2$ given by $i \mapsto U$, $j \mapsto V$, which induces isomorphisms $\GL_1(\mathcal{A}) \cong \GL_2$ and $\PGL_1(\mathcal{A}) \cong \PGL_2$. By construction, $\rho_{(a,b)}$ factors as 
$$
\xymatrix{
(\ZZ/2\ZZ)^2 \ar@{^{ (}->}[r]^\varphi & \PGL_1(\mathcal{A}) \ar[r]^{\cong} & \PGL_2,
}
$$ 
where the embedding $\varphi$ is given by $e_1 \mapsto [i]$, $e_2 \mapsto [j]$. We have therefore reduced the problem to showing that $\varphi_*\colon (K^\times/K^{\times 2})^2 \to H^1(K,\PGL_1(\mathcal{A}))$ sends $(\ol{c},\ol{d})$ to $(ac,bd)_2$ for all $c,d \in K^\times$.

We give a proof of this fact by Galois descent. Let $L = K(\sqrt{c}, \sqrt{d})$; then we may view $\varphi_*(\ol{c},\ol{d})$ as an element of $H^1(\Gal(L/K),\PGL_1(\mathcal{A})(L))$. For simplicity, assume that $c$, $d$ and $c d$ are non-squares; the remaining cases are easier and left to the reader. Define generators $\sigma_1, \sigma_2 \in \Gal(L/K)$ such that $\sigma_1$ fixes $\sqrt{d}$ and sends $\sqrt{c}$ to $-\sqrt{c}$, while  $\sigma_2$ fixes $\sqrt{c}$ and sends $\sqrt{d}$ to $-\sqrt{d}$. Note that the $1$-cocycle $v\colon \Gal(L/K) \to \PGL_1(\mathcal{A})(L)$ representing $\varphi_*(\ol{c},\ol{d})$ is given by $\sigma_1 \mapsto [i]$, $\sigma_2 \mapsto [j]$. Then we twist the Galois action on $\gamma = x + y i + z j + t i j \in \mathcal{A} \otimes_{K} L$ by setting
\begin{align*}
\sigma_1 * \gamma & =  v_{\sigma_1} (\sigma_1(\gamma)) = i^{-1} \sigma_1(\gamma) i = \sigma_1(x) + \sigma_1(y) i - \sigma_1(z) j - \sigma_1(t) i j;\\
\sigma_2 * \gamma & = v_{\sigma_2} (\sigma_2(\gamma)) = j^{-1} \sigma(\gamma) j = \sigma_2(x) - \sigma_2(y) i + \sigma_2(z) j - \sigma_2(t) i j.
\end{align*}
It follows that $\gamma$ is invariant under the twisted Galois action if and only if $\gamma = x + y_1 \sqrt{d}\, i + z_1 \sqrt{c}\, j + t_1 \sqrt{c d}\, i j$ for $x, y_1, z_1, t_1 \in K$. This implies that $\varphi_*(\ol{c},\ol{d})$ is generated as a $K$-algebra by $i' = \sqrt{d}\, i$ and $j' = \sqrt{c}\, j$, which satisfy $i'^2 = bd$, $j'^2 = ac$ and $i'j' + j'i' = 0$. Consequently, we obtain that $\varphi_*(\ol{c},\ol{d}) = (bd,ac)_2 \cong (ac,bd)_2$.
\end{proof}

Recall now that the group $\ZZ/2\ZZ$ embeds into $\PGL_2(k)$ over any field $k$ and the possible embeddings are of the form
$$
\rho_b\colon -1 \mapsto \left(\begin{array}{cc}
0 & b\\
1 & 0
\end{array}\right),
$$
up to conjugacy (see, e.g., \cite{Beau10}). We denote $\fourIdx{}{\rho_b}{}{}{\PP^1}$ simply by $\fourIdx{}{b}{}{}{\PP^1}$. Note that $\fourIdx{}{b}{}{}{\PP^1}$ and $\fourIdx{}{b'}{}{}{\PP^1}$ are isomorphic as $\ZZ/2\ZZ$-varieties if and only if $\ol{b} = \ol{b'}$. By \cite[Example 6]{Le07}, it follows that $\fourIdx{}{b}{}{}{\PP^1}$ is versal if and only if $b \in k^{\times 2}$.

\begin{cor} \label{Corollary_Cohom_2group}
Let $\rho_b$ be defined as above, let $K/k$ be a field extension and identify $H^1(K,\ZZ/2\ZZ)$ with $K^\times/K^{\times 2}$. Then $\Delta_{\rho_b}(\ol{c}) = [(c,b)_2]$ for all $c \in K^\times$.
\end{cor}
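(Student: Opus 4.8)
The plan is to realize $\rho_b$ as the restriction of the Klein four-group embedding $\rho_{(1,b)}$ studied in the previous lemma, and then to invoke functoriality of the cohomological invariant $\Delta$ under restriction to a subgroup. First I would observe that the matrix assigned to the generator $-1$ by $\rho_b$ is literally the matrix $\rho_{(1,b)}(e_1)$ appearing in \eqref{Eq_embedding_Klein2} (the case $\ol a = \ol 1$). Hence, identifying the generator $-1$ of $\ZZ/2\ZZ$ with $e_1$, the embedding $\rho_b$ coincides with the restriction of $\rho_{(1,b)}$ to the subgroup $\langle e_1 \rangle \cong \ZZ/2\ZZ$ of $(\ZZ/2\ZZ)^2$.

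Next I would record the compatibility of $\Delta$ with this restriction. Writing $\iota\colon \ZZ/2\ZZ \hookrightarrow (\ZZ/2\ZZ)^2$ for the inclusion onto $\langle e_1\rangle$, the central extension $G'$ attached to $\rho_b$ via diagram \eqref{Eq_Exact_seq} is precisely the pullback along $\iota$ of the extension attached to $\rho_{(1,b)}$, since both are obtained as preimages in $\GL_2$. This gives a morphism of central extensions which is the identity on $\GG_m$, so naturality of the connecting homomorphism yields a commutative square and hence $\Delta_{\rho_b} = \Delta_{\rho_{(1,b)}} \circ \iota_*$, where $\iota_*\colon H^1(K,\ZZ/2\ZZ) \to H^1(K,(\ZZ/2\ZZ)^2)$ is the map induced on cohomology.

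Finally, under the identifications $H^1(K,\ZZ/2\ZZ) \cong K^\times/K^{\times 2}$ and $H^1(K,(\ZZ/2\ZZ)^2) \cong (K^\times/K^{\times 2})^2$, the map $\iota_*$ sends $\ol c$ to $(\ol c, \ol 1)$. Applying Lemma \ref{Lemma_Cohom_Kleingroup} with $a = 1$ and $d = 1$ then gives $\Delta_{\rho_b}(\ol c) = \Delta_{\rho_{(1,b)}}(\ol c, \ol 1) = [(c,b)_2]$, as claimed. The only genuine content is the functoriality of $\Delta$ under restriction; once that compatibility of connecting maps is in hand, the statement is a direct specialization of the Klein four-group computation, so I do not anticipate any serious obstacle.
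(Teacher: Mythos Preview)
Your proposal is correct and follows essentially the same approach as the paper: factor $\rho_b$ through the inclusion $\iota\colon \ZZ/2\ZZ \hookrightarrow (\ZZ/2\ZZ)^2$ onto $\langle e_1\rangle$, use functoriality to reduce to the Klein four-group computation, and then apply Lemma~\ref{Lemma_Cohom_Kleingroup} with $(a,d)=(1,1)$. The paper phrases the functoriality at the level of the induced map $\rho_{b*} = \rho_{(1,b)*}\circ \phi_*$ into $H^1(K,\PGL_2)$ rather than directly for $\Delta$, but this is only a cosmetic difference.
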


\begin{proof}
We need to show that $\rho_{b*}\colon K^\times/K^{\times 2} \to H^1(K,\PGL_2)$ maps $\ol{c}$ to $(c,b)_2$ for all $c \in K^{\times}$. Note that we may write $\rho_b = \rho_{(1,b)} \circ \phi$, where $\phi\colon \ZZ/2\ZZ \to (\ZZ/2\ZZ)^2$ sends $-1 \mapsto e_1$. Therefore we must have
$$
\rho_{b*}(\ol{c}) = \rho_{(1,b)*} \circ \phi_* (\ol{c}) = \rho_{(1,b)*}(\ol{c},\ol{1}) = (c,b)_2,
$$
where the last equality follows from Lemma \ref{Lemma_Cohom_Kleingroup}.
\end{proof}

\section{Cyclic and dihedral groups: Compressibility of $\PP^1$} \label{Sect_Incompressibility_Cyclic_Dihedral}

We set some notation for the remainder of the paper. Given an integer $n \geq 2$, let $\omega_n$ be a choice of a primitive $n$-th root of $1$, $\alpha_n = (\omega_n + \omega_n^{-1})/2$ and $\beta_n = \alpha_n^2-1$.

Recall that the groups $\ZZ/n\ZZ$ and $D_{2n}$ act faithfully on some curve of genus $0$ if and only if they act faithfully on $\PP^1$, which happens if and only if $\alpha_n \in k$ (see \cite{Beau10,Ga13} for details). If the latter condition does not hold, the existence of strongly incompressible curves for $\ZZ/n\ZZ$ and $D_{2n}$ follows from Theorem \ref{Thm_Incompressible_No_Action_Genus0}.

\begin{lemma} \label{Lemma_Compr_Cyclic_Dihedral}
Let $n \geq 3$ be any integer, let $k$ be a field containing $\alpha_n$, and define the embedding 
$\rho\colon D_{2n} \hookrightarrow \PGL_2(k)$ by sending
\begin{equation} \label{Eq_Embedding_Dihedral_1}
\sigma \mapsto \left(\begin{array}{cc}
\alpha_n + 1 & \beta_n\\
1 & \alpha_n + 1
\end{array}\right), \quad
\tau \mapsto \left(\begin{array}{cc}
1 & 0\\
0 & -1
\end{array}\right),
\end{equation}
where $\sigma, \tau$ are the usual generators of $D_{2n}$. Then $\fourIdx{}{\rho}{}{}{\PP^1}$ is not strongly incompressible.
\end{lemma}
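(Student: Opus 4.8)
The plan is to show that $\fourIdx{}{\rho}{}{}{\PP^1}$ admits a non-birational $G$-compression onto \emph{itself}. Since the target then carries the same faithful $D_{2n}$-action, any $G$-equivariant self-map of $\fourIdx{}{\rho}{}{}{\PP^1}$ of degree $>1$ is a $G$-compression to a faithful $G$-curve that is not birational, and this already witnesses that $\fourIdx{}{\rho}{}{}{\PP^1}$ is not strongly incompressible. So the whole task reduces to producing one explicit such self-map defined over $k$.

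First I would diagonalize the action over $\ol{k}$. A direct computation shows that the matrix assigned to $\sigma$ has characteristic equation $(\alpha_n+1-\lambda)^2 = \beta_n$, hence eigenvalues $1+\omega_n$ and $1+\omega_n^{-1}$ with ratio $\omega_n$; thus in $\PGL_2$ it is a rotation of order $n$ whose two fixed points lie at $t = \pm\sqrt{\beta_n}$ in the affine coordinate $t$ for which $\tau\colon t \mapsto -t$ (here $\beta_n = \alpha_n^2-1 = ((\omega_n-\omega_n^{-1})/2)^2$). Passing to the coordinate $w = (t-\sqrt{\beta_n})/(t+\sqrt{\beta_n})$, which sends these fixed points to $0$ and $\infty$, one gets $\sigma\colon w \mapsto \omega_n w$ and $\tau\colon w \mapsto 1/w$. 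In these coordinates the endomorphism $\Phi\colon w \mapsto w^{n+1}$ visibly commutes with both generators, using $\omega_n^{n+1} = \omega_n$ for $\sigma$ and the identity $(1/w)^{n+1} = 1/w^{n+1}$ for $\tau$; so $\Phi$ is a $G$-equivariant map of degree $n+1$ over $\ol{k}$.

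The crux is to descend $\Phi$ to $k$. The coordinate change is defined over $k(\sqrt{\beta_n})$, which in fact equals $k(\omega_n)$ since $\omega_n = \alpha_n + \sqrt{\beta_n}$ and $\alpha_n \in k$. If $\beta_n$ is a square there is nothing to prove; otherwise $\Gal(k(\sqrt{\beta_n})/k)$ is generated by $g\colon \sqrt{\beta_n}\mapsto -\sqrt{\beta_n}$, and the key observation is that $g$ sends the function $w = (t-\sqrt{\beta_n})/(t+\sqrt{\beta_n})$ to $1/w$, i.e. $g$ acts on the $w$-line exactly as the involution $\tau$ does. Because $\Phi$ is $\tau$-equivariant, applying $g$ to its defining relation $w(\Phi(t)) = w(t)^{n+1}$ yields $1/w({}^g\Phi(t)) = 1/w(t)^{n+1}$, i.e. the same relation, so ${}^g\Phi = \Phi$ and Galois descent places $\Phi$ over $k$. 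Equivalently, one may simply clear the radical from $t \mapsto \sqrt{\beta_n}\,\frac{(t+\sqrt{\beta_n})^{n+1}+(t-\sqrt{\beta_n})^{n+1}}{(t+\sqrt{\beta_n})^{n+1}-(t-\sqrt{\beta_n})^{n+1}}$, exhibiting $\Phi$ directly as a ratio of polynomials in $k[t]$.

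Finally, since $\Phi$, $\sigma$ and $\tau$ are now all defined over $k$ and the commutation relations hold over $\ol{k}$, they hold over $k$, so $\Phi$ is a genuine $G$-compression defined over $k$; as $n \geq 3$ its degree $n+1 \geq 4$ is greater than $1$, so it is not birational, proving the lemma. I expect the descent step to be the only real obstacle, everything else being the standard diagonalization of a dihedral action together with an elementary equivariance check. The descent works precisely because the quadratic Galois action swapping the two square roots of $\beta_n$ coincides, on the diagonalizing coordinate, with the reflection $\tau$ that $\Phi$ already respects.
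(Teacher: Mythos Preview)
Your argument is correct and follows essentially the same route as the paper: diagonalize the action over $k(\sqrt{\beta_n})$ so that $\sigma$ becomes $w\mapsto\omega_n w$ and $\tau$ becomes $w\mapsto 1/w$, observe that $w\mapsto w^{n+1}$ commutes with both, and then verify that the conjugated map is defined over $k$ by writing it out explicitly. Your Galois-descent explanation (that the nontrivial element of $\Gal(k(\sqrt{\beta_n})/k)$ acts on the $w$-coordinate exactly as $\tau$ does) is a clean conceptual gloss on why the explicit formula lands in $k[t]$, but it is the same proof.
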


\begin{proof}
We need to exhibit a $G$-equivariant map $\fourIdx{}{\rho}{}{}{\PP^1} \to \fourIdx{}{\rho}{}{}{\PP^1}$ that is not injective. Select a square root of $\beta_n$ (possibly in a quadratic extension of $k$) and define 
$$
Q = \left(\begin{array}{cc}
1 & 1\\
-\beta_n^{-1/2} & \beta_n^{-1/2}
\end{array} \right),
$$
in such a way that
$$
Q^{-1}\rho(\sigma)Q = \left(\begin{array}{cc}
1+\omega_n & 0\\
0 & 1+\omega_n^{-1}  
\end{array} \right), \quad Q^{-1}\rho(\tau)Q = \left(\begin{array}{cc}
0 & 1\\
1 & 0  
\end{array} \right).
$$
Let $F\colon \PP^1 \to \PP^1$ be given by $F(x:y) = (x^{n+1}:y^{n+1})$. A calculation shows that
$$
F\circ (Q^{-1}\rho(\sigma)Q) = (Q^{-1}\rho(\sigma)Q) \circ F
$$
and
$$
F\circ (Q^{-1}\rho(\tau)Q) = (Q^{-1}\rho(\tau)Q) \circ F.
$$

It follows that $Q \circ F \circ Q^{-1}$ is a $G$-equivariant map $\fourIdx{}{\rho}{}{}{\PP^1} \to \fourIdx{}{\rho}{}{}{\PP^1}$ defined over $k(\beta_n^{1/2})$. Explicitly, note that $Q \circ F \circ Q^{-1}$ sends $(x:y)$ to
$$
\left((x+\beta_n^{1/2}y)^{n+1} + (x-\beta_n^{1/2}y)^{n+1}:\beta_n^{-1/2} ((x+\beta_n^{1/2}y)^{n+1} - (x-\beta_n^{1/2}y)^{n+1})\right).
$$
In particular, it follows that $Q \circ F \circ Q^{-1}$ is actually defined over $k$. Since it has degree $n+1$, it is not injective and we are done.
\end{proof}

\begin{remark} \label{Rem_Compr_Cyclic_Dihedral}
Restricting the embedding \eqref{Eq_Embedding_Dihedral_1} to $\ZZ/n\ZZ$, the above lemma proves {\em a fortiori} that the projective line is not strongly incompressible as a $\ZZ/n\ZZ$-variety.
\end{remark}

\begin{prop} \label{Prop_Cyclic_Versal_Incompressibility}
Let $n \geq 2$ be any integer and let $k$ be a field containing $\omega_n$. Then there are no strongly incompressible $\ZZ/n\ZZ$-varieties.
\end{prop}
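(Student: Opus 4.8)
The plan is to exploit the fact that, because $\omega_n \in k$, the group $\ZZ/n\ZZ$ has a faithful one\nobreakdash-dimensional representation, which furnishes a one\nobreakdash-dimensional \emph{versal} $\ZZ/n\ZZ$-variety equipped with a degree\nobreakdash-raising equivariant self\nobreakdash-map. Concretely, fix a generator $g$ of $\ZZ/n\ZZ$ and let $\AA^1$ carry the linear action $z \mapsto \omega_n z$ (i.e.\ $g$ acts via $\chi\colon \ZZ/n\ZZ \hookrightarrow \GG_m$, $g \mapsto \omega_n$). Since $\omega_n$ has order $n$, every nontrivial element acts on $\GG_m = \AA^1 \setminus \{0\}$ without fixed points, so this action is free on $\GG_m$ and $\AA^1$ is generically free. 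I would take $U = \GG_m$, a $\ZZ/n\ZZ$-invariant dense open subset of $\AA^1$ containing no fixed points.

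First I would record that $\AA^1$ is a versal $\ZZ/n\ZZ$-variety, being a generically free linear representation (see \cite[Example 5.4]{GMS03}). Hence, given an \emph{arbitrary} faithful $\ZZ/n\ZZ$-variety $X$, versality produces a $\ZZ/n\ZZ$-equivariant rational map $f\colon X \dashrightarrow U$. The key observation is that $f$ is automatically dominant: the closure of its image is a $\ZZ/n\ZZ$-invariant irreducible subvariety of the curve $\GG_m$, hence either a single point or all of $\GG_m$; an invariant point would be a fixed point, of which $\GG_m$ has none, so $f$ must be dominant onto $\GG_m$.

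It then remains only to raise the degree. I would set $F\colon \GG_m \to \GG_m$, $z \mapsto z^{n+1}$; since $n+1 \equiv 1 \pmod n$ we get $F(\omega_n z) = \omega_n^{\,n+1} z^{n+1} = \omega_n F(z)$, so $F$ is $\ZZ/n\ZZ$-equivariant of degree $n+1 > 1$ (this is exactly the self\nobreakdash-map employed in Lemma \ref{Lemma_Compr_Cyclic_Dihedral} and Remark \ref{Rem_Compr_Cyclic_Dihedral}). The composite $F \circ f\colon X \dashrightarrow \GG_m$ is then a $\ZZ/n\ZZ$-compression onto the faithful, generically free $\ZZ/n\ZZ$-variety $\GG_m$, and it cannot be birational: if $\dim X > 1$ the dimension drops, while if $\dim X = 1$ the composite is finite of degree divisible by $n+1 > 1$. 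In either case $X$ admits a non\nobreakdash-birational compression, so no faithful $\ZZ/n\ZZ$-variety is strongly incompressible.

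Once versality is in hand the argument is essentially bookkeeping; the single point demanding care is the dominance of $f$, and this is precisely where I rely on the target being a one\nobreakdash-dimensional variety with no invariant points. This is also what pins the hypothesis down: a one\nobreakdash-dimensional versal $\ZZ/n\ZZ$-variety exists exactly when $\ZZ/n\ZZ$ embeds in $\GG_m$ over $k$, i.e.\ precisely when $\omega_n \in k$.
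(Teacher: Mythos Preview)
Your proof is correct and follows essentially the same approach as the paper: both use the versal one-dimensional $\ZZ/n\ZZ$-variety arising from the character $g \mapsto \omega_n$ (you on $\GG_m \subset \AA^1$, the paper on $\fourIdx{}{\rho}{}{}{\PP^1}$ with $\rho(g) = \mathrm{diag}(\omega_n,1)$, which is the same action on a birational model) together with the equivariant self-map $z \mapsto z^{n+1}$. The only difference is cosmetic: you work with the affine model and spell out the dominance argument explicitly (via the absence of fixed points in $\GG_m$), whereas the paper packages this into the statement that a versal $\PP^1$ receives a compression from every faithful $\ZZ/n\ZZ$-variety.
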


\begin{proof}
This is proved in \cite[Example 5]{Re04}; we supply a short alternative proof. Recall that the embedding $\rho\colon \ZZ/n\ZZ \hookrightarrow \PGL_2(k)$ sending a generator of $\ZZ/n\ZZ$ to the diagonal matrix $\mathrm{diag}(\omega_n,1)$, is generic, i.e., $\fourIdx{}{\rho}{}{}{\PP^1}$ is versal. Any faithful $\ZZ/n\ZZ$-variety can thus be $\ZZ/n\ZZ$-compressed to $\fourIdx{}{\rho}{}{}{\PP^1}$. Moreover, $\fourIdx{}{\rho}{}{}{\PP^1}$ is not strongly incompressible, as shown by the nontrivial $\ZZ/n\ZZ$-compression $(x:y) \mapsto (x^{n+1}:y^{n+1})$.
\end{proof}

The techniques introduced above can be used to show that there are no strongly incompressible varieties for odd cyclic and odd dihedral groups if they act faithfully on the projective line.

\begin{prop} \label{Prop_Odd_Dihedral_Incompressibility}
Let $n \geq 3$ be an odd integer, let $k$ be a field containing $\alpha_n$, and let $G$ be either $\ZZ/n\ZZ$ or $D_{2n}$. Then there are no strongly incompressible $G$-varieties.
\end{prop}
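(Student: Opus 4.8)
The plan is to reduce the statement to the previously established incompressibility results by exhibiting an explicit nontrivial self-compression of a versal $G$-variety, exactly as in Proposition \ref{Prop_Cyclic_Versal_Incompressibility}. The key observation is that when $n$ is odd, every faithful $G$-variety can be $G$-compressed onto a suitable model of $\fourIdx{}{\rho}{}{}{\PP^1}$, because the relevant cohomological invariant $\Delta_\rho$ behaves well in the odd case. I would first treat the dihedral group $G = D_{2n}$; the cyclic case $G = \ZZ/n\ZZ$ will follow by restriction along $\ZZ/n\ZZ \hookrightarrow D_{2n}$, much as Remark \ref{Rem_Compr_Cyclic_Dihedral} derived the cyclic statement from Lemma \ref{Lemma_Compr_Cyclic_Dihedral}.

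The main steps are as follows. First I would fix the embedding $\rho\colon D_{2n} \hookrightarrow \PGL_2(k)$ from \eqref{Eq_Embedding_Dihedral_1} and invoke Lemma \ref{Lemma_Compr_Cyclic_Dihedral}, which already produces a $G$-equivariant self-map $\fourIdx{}{\rho}{}{}{\PP^1} \to \fourIdx{}{\rho}{}{}{\PP^1}$ of degree $n+1$ defined over $k$; since $n+1 \geq 4$, this map is not birational, so $\fourIdx{}{\rho}{}{}{\PP^1}$ itself is not strongly incompressible. The remaining task is to show that \emph{every} faithful $G$-variety $X$ admits a $G$-compression onto $\fourIdx{}{\rho}{}{}{\PP^1}$, so that no faithful $G$-variety can be strongly incompressible: given such an $X$, compressing it to $\fourIdx{}{\rho}{}{}{\PP^1}$ and then applying the degree-$(n+1)$ self-map yields a nonbirational $G$-compression out of $X$. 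By Corollary \ref{Cor_Maps_P1}, the existence of a $G$-compression $X \dashrightarrow \fourIdx{}{\rho}{}{}{\PP^1}$ is equivalent to the vanishing of $\Delta_\rho(X) \in \Br(k(X)^G)$, so the crux is to prove that $\Delta_\rho(X)$ is trivial for \emph{all} faithful $G$-varieties $X$ when $n$ is odd.

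The heart of the argument, and the step I expect to be the main obstacle, is this vanishing of $\Delta_\rho$. The invariant $\Delta_\rho$ is the connecting map arising from the central extension $1 \to \GG_m \to G' \to G \to 1$ of \eqref{Eq_Exact_seq}, where $G'$ is the preimage of $\rho(G)$ in $\GL_2$. Its image is a $2$-torsion class in the Brauer group, since it is controlled by the Schur multiplier / the class of the extension in $H^2(G, \GG_m)$, and the obstruction is killed precisely when this extension class becomes trivial after pullback. For $G = \ZZ/n\ZZ$ or $D_{2n}$ with $n$ odd, I expect the relevant cohomological obstruction to vanish universally: the Schur multiplier $H^2(D_{2n}, \CC^\times)$ is trivial for $n$ odd, and more to the point the central extension $G'$ splits as $\GG_m \times G$ over $k$ (equivalently, $\rho$ lifts to an honest linear representation $G \hookrightarrow \GL_2$), which forces $\Delta_\rho \equiv 1$ identically. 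I would verify the splitting directly from the explicit matrices in \eqref{Eq_Embedding_Dihedral_1}: because $n$ is odd one can normalize the lifts of $\rho(\sigma)$ and $\rho(\tau)$ to $\GL_2$ so that they satisfy the defining relations of $D_{2n}$ on the nose (the order of $\sigma$ being odd is exactly what lets one extract the needed scalar, since every element of $k^\times$ has an $n$-th root available up to the square ambiguity that the odd hypothesis removes). Once the lift to $\GL_2$ exists, $[X]$ lifts to a $G'$-torsor for every $X$, so $\Delta_\rho(X)$ is trivial by the discussion following \eqref{Eq_Exact_seq_cohom}, completing the proof.
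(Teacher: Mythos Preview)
Your proposal follows the same architecture as the paper's proof: both show that every faithful (irreducible) $G$-variety admits a $G$-compression to $\fourIdx{}{\rho}{}{}{\PP^1}$, and then invoke Lemma~\ref{Lemma_Compr_Cyclic_Dihedral} for the non-birational self-compression. The only difference is in how the first step is justified. The paper simply cites \cite[Thm.~8]{Le07} to assert that $\fourIdx{}{\rho}{}{}{\PP^1}$ is versal when $n$ is odd, whereas you argue directly that $\rho$ lifts to a linear representation $G \hookrightarrow \GL_2(k)$, so that the extension $1 \to \GG_m \to G' \to G \to 1$ splits, $\Delta_\rho$ vanishes identically, and Corollary~\ref{Cor_Maps_P1} produces the compression. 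These two justifications are essentially equivalent---a faithful linear $G$-action on $\AA^2$ projectivizes to a versal action on $\PP^1$---so your route amounts to a self-contained proof of the relevant case of the cited versality result.

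One point deserves care. Your phrase ``every element of $k^\times$ has an $n$-th root available up to the square ambiguity that the odd hypothesis removes'' is not correct as stated and does not explain why the lift exists over $k$ rather than merely over $\ol{k}$ (which is all the triviality of the Schur multiplier gives). The honest computation goes as follows: with the explicit lifts $S, T \in \GL_2(k)$ of $\rho(\sigma), \rho(\tau)$ from \eqref{Eq_Embedding_Dihedral_1}, one has $T^2 = I$, $T S T^{-1} = (\det S)\, S^{-1}$, and $S^n = cI$ where $c = (1+\omega_n)^n = (1+\omega_n^{-1})^n$ is fixed by $\omega_n \leftrightarrow \omega_n^{-1}$ and hence lies in $k$. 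A lift $(aS, T)$ satisfies the $D_{2n}$-relations iff $a^2 = (\det S)^{-1}$ and $a^n = c^{-1}$; since $\gcd(2,n)=1$ and $c^2 = (\det S)^n$, B\'ezout yields a unique $a \in k^\times$ solving both. So the lift does exist over $k$ and your approach goes through, but the reason is the coprimality of $2$ and $n$ combined with $c,\det S \in k^\times$, not the availability of $n$-th roots in $k$.
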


\begin{proof}
We focus on the case $G = D_{2n}$; the cyclic case follows along the same lines. Note that the embedding $\rho$ introduced in \eqref{Eq_Embedding_Dihedral_1} is generic for odd $n$, i.e., the $G$-variety $\fourIdx{}{\rho}{}{}{\PP^1}$ is versal (see \cite[Thm. 8]{Le07}). It follows that any faithful $G$-variety can be $G$-compressed to $\fourIdx{}{\rho}{}{}{\PP^1}$. It thus suffices to prove that $\fourIdx{}{\rho}{}{}{\PP^1}$ itself is not strongly incompressible. This follows directly from Lemma \ref{Lemma_Compr_Cyclic_Dihedral}.
\end{proof}

\section{Strongly incompressible curves for even cyclic groups}

Let $G$ = $\ZZ/n\ZZ$, where $n \geq 4$ is even, and let $k$ be a field containing $\alpha_n$. Define the embedding $\rho\colon G \hookrightarrow \PGL_2(k)$ by sending
\begin{equation} \label{Eq_embedding_cyclic}
\sigma \mapsto \left(\begin{array}{cc}
\alpha_n + 1 & \beta_n\\
1 & \alpha_n + 1
\end{array}\right),
\end{equation}
where $\sigma$ is a generator of $\ZZ/n\ZZ$ (see \cite{Beau10, Ga13}). Recall that this embedding is unique up to conjugacy. By the results in Proposition \ref{Prop_Cyclic_Versal_Incompressibility}, it remains to analyze the case where $\omega_n \not\in k$. (In this situation, $\fourIdx{}{\rho}{}{}{\PP^1}$ is not versal.) Interestingly, we will prove that there exist strongly incompressible $G$-curves under this assumption.

\begin{prop} \label{Prop_Even_Cyclic_Not_Versal_Incompressibility}
Let $k$ be a field such that $\alpha_n \in k$ and $\omega_n \not\in k$. Then there exists a strongly incompressible $G$-curve.
\end{prop}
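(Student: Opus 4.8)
The plan is to apply Lemma \ref{Lemma_NoCompression_Small_Genus}: it suffices to produce a faithful $G$-curve $X$, with $G=\ZZ/n\ZZ=\langle\sigma\rangle$, admitting no $G$-compression onto a curve of genus $\leq 1$. Note first that since $\sqrt{\beta_n}=(\omega_n-\omega_n^{-1})/2$ and $\alpha_n\in k$, one has $k(\sqrt{\beta_n})=k(\omega_n)$, a quadratic extension of $k$; in particular $\beta_n\notin k^{\times 2}$ because $\omega_n\notin k$. I would build $X$ with two properties: (P1) every element of $G$ fixes a geometric point of $X$; and (P2) the involution $\iota=\sigma^{n/2}$ fixes a point $P\in X$ whose residue field does not contain $\sqrt{\beta_n}$ (for instance a $k$-rational point). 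As $G$ is cyclic, (P1) is equivalent to asking that $\sigma$ fix a geometric point $Q$ (necessarily with $\omega_n\in k(Q)$), after which every power of $\sigma$ fixes $Q$.

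First I rule out compressions to genus $1$ using (P1). Suppose $f\colon X\to E$ is a $G$-compression with $E$ of genus $1$; fixing a base point makes $E_{\ol{k}}$ an elliptic curve. Each $g\in G$ fixes a geometric point of $X$, hence fixes its image on $E_{\ol{k}}$, so no nontrivial element of $G$ acts as a (fixed-point-free) translation. By the split sequence of Lemma \ref{Lemma_Facts_Elliptic}(a) the kernel of the $\Gamma_k$-equivariant projection $G\to\Aut_0(E_{\ol{k}})$ consists of such translations and is therefore trivial, so the projection is injective. Since $\Aut_0(E_{\ol{k}})\cong\mu_m$ with $m\in\{2,4,6\}$ (Lemma \ref{Lemma_Facts_Elliptic}(b)) and $n\geq 4$ is even, this forces $n\in\{4,6\}$ and $m=n$; moreover the image of $\sigma$ is an element of order $n$ in $\Aut_0(E)(k)\cong\mu_n(k)$, whence $\omega_n\in k$, contradicting the hypothesis.

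Next I rule out compressions to genus $0$ using (P2). Let $Y$ be any faithful genus-$0$ $G$-curve. Over $\ol{k}$ the generator $\sigma$ acts on $Y_{\ol{k}}\cong\P1$ with two fixed points whose multipliers are $\omega_n,\omega_n^{-1}$, so $\iota=\sigma^{n/2}$ is an involution with fixed locus exactly these two points. Because $\omega_n\notin k$, the nontrivial element of $\Gal(k(\omega_n)/k)$ inverts $\omega_n$ and therefore interchanges the two fixed points; hence $Y^\iota$ is a single closed point of degree $2$ with residue field $k(\omega_n)=k(\sqrt{\beta_n})$. Now a $G$-compression $X\to Y$ is a finite $G$-equivariant morphism, and it must carry the point $P$ of (P2) into $Y^\iota$ (as $\iota P=P$ implies $\iota f(P)=f(P)$); the induced inclusion of residue fields gives $k(\sqrt{\beta_n})\hookrightarrow k(P)$, so $\beta_n$ is a square in $k(P)$, contradicting (P2). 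Thus $X$ has no $G$-compression onto any genus-$0$ curve, split or not, and combined with the previous paragraph none onto a curve of genus $\leq 1$; Lemma \ref{Lemma_NoCompression_Small_Genus} then produces a strongly incompressible $G$-curve. (Incidentally, (P2) is a geometric incarnation of $\Delta_\rho(X)\neq 1$ in the sense of Corollary \ref{Cor_Maps_P1}, but it has the advantage of handling non-split conics uniformly.)

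The main obstacle is the construction of $X$ realizing (P1) and (P2) at once. I would take $X$ to be a $\ZZ/n\ZZ$-Galois cover of $\P1$ over $k$ with prescribed ramification: full inertia $G$ over a closed point of residue field $k(\omega_n)$ (yielding the totally ramified point $Q$ of (P1), for which indeed $\omega_n\in k(Q)$), together with inertia $\langle\iota\rangle$ over two $k$-rational points, which both produce $\iota$-fixed points and balance the product-of-inertia relation $\sigma\cdot\sigma^{-1}\cdot\iota\cdot\iota=1$ in $G$. This inertia assignment is compatible with the $k$-Galois action — the conjugate pair of branch points carries $\sigma,\sigma^{-1}$ because the nontrivial element of $\Gal(k(\omega_n)/k)$ inverts $\omega_n$, while $\iota$ is its own inverse — so by the branch-cycle/descent formalism such a cover exists over $k$. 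The delicate point is to guarantee a genuinely $\sqrt{\beta_n}$-free (e.g. $k$-rational) point in the fibre over an $\iota$-branch point, which can be arranged by controlling the decomposition of that fibre, in the spirit of the construction underlying Theorem \ref{Theorem_Curve_FixedPoints}. I expect this existence statement, valid over an arbitrary field of characteristic $0$, to be the technical heart of the proof; the incompressibility itself then follows formally from (P1), (P2) and the two fixed-point dichotomies above.
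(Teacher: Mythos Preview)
Your reduction arguments are correct and take a genuinely different, more geometric route than the paper. For genus $0$, the paper computes the Brauer-class invariant $\Delta_{\rho'}(X)=[(s(x),\beta_n)_2]$ via Corollary~\ref{Cor_Maps_P1} and Lemma~\ref{Lemma_Square_Quaternions}; your argument bypasses this machinery by directly identifying the residue field of $Y^{\iota}$ as $k(\omega_n)$ (the multiplier argument is valid: the tangent eigenvalue at a $\sigma$-fixed point is Galois-covariant, so $k(p_1)=k(\omega_n)$). For genus $1$, the paper does not assume (P1); it runs a case analysis to show that $\sigma^{m}$ must land in the translation subgroup, using only that $\sigma^m$ fixes a $k$-point. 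Your version uses (P1) to force $G\cap E_{\ol k}=1$ and hence $G\hookrightarrow\Aut_0(E)(k)\cong\mu_m(k)$, which is shorter but requires the stronger hypothesis on $X$. (One small imprecision: since your $E$ need not have a $k$-point, the $\Gamma_k$-equivariant projection should be phrased via the Jacobian, $\Aut(E)\to\Aut_0(\mathrm{Jac}(E))$, which is defined over $k$; the conclusion $\omega_n\in k$ then follows from Lemma~\ref{Lemma_Facts_Elliptic}(b),(c).)

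The genuine gap is exactly where you locate it: the construction of $X$ satisfying (P1) and (P2). Your branch-cycle/descent sketch is plausible over $\ol{k}$, but since $\omega_n\notin k$ you cannot use naive Kummer theory, and the existence of a $k$-rational point in an $\iota$-fibre is not automatic from the inertia data alone --- it is a decomposition condition, not a ramification condition, and twisting by $H^1(k,\ZZ/2\ZZ)$ (via the centre $\langle\iota\rangle$) is needed to arrange it. The paper does precisely this: it realises $X$ as a quadratic twist of $\fourIdx{}{\rho}{}{}{\PP^1}$ along the exact sequence $1\to\ZZ/2\ZZ\to G\to\ZZ/m\ZZ\to 1$, choosing the twisting class $f(t)\in K^\times/K^{\times2}$ so that the resulting hyperelliptic curve $y^2=s(x)$ has $s(a)=0$ for some $a\in k$, yielding the $k$-rational $\iota$-fixed point $(a,0)$. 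It is worth noting that the paper's curve in fact also satisfies your (P1): the branch locus of $X\to\PP^1_x$ contains $x=\pm\sqrt{\beta_n}$ (the zeros of $x^2-\beta_n$), and these are exactly the fixed points of $\bar\sigma$ on $\PP^1_x$, so the unique point of $X$ above $x=\sqrt{\beta_n}$ is fixed by $\sigma$. Thus your reduction arguments could be applied verbatim to the paper's explicit $X$, giving a complete alternative proof that avoids the Brauer-group computation.
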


\begin{proof}
Our goal is to construct a hyperelliptic curve endowed with a faithful $G$-action that cannot be $G$-compressed to any curve of genus $\leq 1$. The result then follows from Lemma \ref{Lemma_NoCompression_Small_Genus}.

Let $m = n/2$, let $K = k(t)$ be the rational function field and consider the exact sequence $1 \to \ZZ/2\ZZ \to G \to \ZZ/m\ZZ \to 1$. Then we obtain an exact sequence in cohomology
$$
\xymatrix{
1 \ar[r] & H^1(K,\ZZ/2\ZZ) \ar[r] & H^1(K,G) \ar[r] & H^1(K,\ZZ/m\ZZ).
}
$$
Consider the class $[\fourIdx{}{\rho}{}{}{\PP^1}] \in H^1(K,G)$. Its image in $H^1(K,\ZZ/m\ZZ)$ is equal to the class of the $\ZZ/m\ZZ$-variety $Y = \fourIdx{}{\rho}{}{}{\PP^1}/(\ZZ/2\ZZ)$ (which is abstractly isomorphic to $\PP^1$). Recall that $\sigma^m$ maps to 
$$
\left(\begin{array}{cc}
0 & \beta_n\\
1 & 0
\end{array}\right)
$$
under the embedding $\rho$. It thus follows that the quotient map $\fourIdx{}{\rho}{}{}{\PP^1} \to Y$ is given by $(x:y) \mapsto (x^2+\beta_n y^2 : 2xy)$.

Note that $H^1(K,\ZZ/2\ZZ) = K^\times/K^{\times 2}$ acts on the fiber of $[Y]$. To describe this action, write $L = k(\fourIdx{}{\rho}{}{}{\PP^1}) = k(u)$, which is a $G$-Galois extension of $K$. Then the subextension $L_0 = k(Y) = k(x)$ of $L$, where $x = \frac{u^2 + \beta_n}{2u}$, corresponds to the $\ZZ/m\ZZ$-torsor $Y \to Y/(\ZZ/m\ZZ)$. A short computation shows that $L$ can be obtained from $L_0$ by adjoining $\sqrt{x^2-\beta_n}$. An element $c \in K^\times/K^{\times 2}$ acts on the torsor $L/K$ by replacing $L = L_0(\sqrt{x^2-\beta_n})$ with $L_0(\sqrt{(x^2-\beta_n)f(t)})$, where $f(t) \in K$ is any representative of the class $c$. Recall that $t$ is a rational function of $x$, say $t = p(x)/q(x)$. Thus, in general we obtain the function field of a hyperelliptic curve $X$, which is naturally endowed with a faithful $G$-action. Our goal is to make a clever selection of the class $c$. Take $a \in k$, not a zero of $q$ or $q p' - p q'$, and let $c$ be the class of $f(t) = q(a)t - p(a)$. An explicit hyperelliptic equation for $X$ is given by $y^2 = s(x)$, where $s(x)$ is the squarefree part of 
$$
(x^2 - \beta_n) (q(a) p(x) - p(a) q(x)) q(x).
$$ 
By assumption, $s(a) = 0$, whence $(x,y) = (a,0)$ is a $k$-rational point of $X$.

We claim that $X$ cannot be $G$-compressed to any curve of genus $0$. First of all, such a curve would be forced to be $\fourIdx{}{\rho}{}{}{\PP^1}$ since $X$ has $k$-rational points. For the sake of contradiction, suppose that there exists a $G$-compression $X \to \fourIdx{}{\rho}{}{}{\PP^1}$. Regard this map as a $\ZZ/2\ZZ$-compression. As we saw before, $\fourIdx{}{\rho}{}{}{\PP^1}$ is isomorphic to $\fourIdx{}{\beta_n}{}{}{\PP^1}$ as a $\ZZ/2\ZZ$-variety. On the other hand, if we regard $X$ as a $\ZZ/2\ZZ$-variety, its class $[X] \in H^1(L_0,\ZZ/2\ZZ) = L_0^\times/L_0^{\times 2}$ is given by $\ol{s(x)}$. If we denote the restriction of $\rho$ to $\ZZ/2\ZZ$ by $\rho '$, we conclude using Corollary \ref{Corollary_Cohom_2group} that $\Delta_{\rho '}(X) = [(s(x),\beta_n)_2]$. This class must be trivial over $L_0 = k(x)$ by Corollary \ref{Cor_Maps_P1}. If we apply Lemma \ref{Lemma_Square_Quaternions} to the root $a$ of $s$, we obtain that $\beta_n \in k^{\times 2}$, i.e., $\omega_n - \omega_n^{-1} \in k$. However, this contradicts the fact that $\omega_n \not\in k$.

Next, we prove that $X$ cannot be $G$-compressed to any $G$-curve of genus $1$. Since $X$ has $k$-rational points, it suffices to prove that there is no $G$-compression $X \to E$, where $E$ is an elliptic curve endowed with a faithful $G$-action. Suppose there is such a $G$-compression and regard $G$ as a subgroup of $\Aut(E)$. By Lemma \ref{Lemma_Facts_Elliptic}(a), we can write $G \cong G_0 \times \pi(G)$, where $G_0 = G \cap E$ and $\pi(G) \subset \Aut_0(E)$. Since $G$ is cyclic, we conclude that $G_0$ and $\pi(G)$ are cyclic groups of relatively prime order. We claim that $\sigma^m$ (the unique element of order $2$ inside $G$) belongs to $G_0$, or equivalently that $G_0$ has even order. Suppose on the contrary that the order of $\pi(G)$ is even, i.e., $\pi(G) \cong \ZZ/d\ZZ$ for $d = 2$, $4$, or $6$. By Lemma \ref{Lemma_Facts_Elliptic}(d), the translation by $P_0 \in E$ and the automorphism $\alpha \in \Aut_0(E)$ commute if and only if $\alpha(P_0) = P_0$. Since $\pi(G)$ has even order, it contains the inversion map $P \mapsto -P$. Therefore any point of $E$ fixed by $\pi(G)$ has order dividing $2$. Since we are assuming that $G_0$ is a cyclic group of odd order that commutes with $\pi(G)$, it must be trivial. It follows that $G = \pi(G) \cong \Aut_0(E) \cong \ZZ/n\ZZ$ for $n = 4$ or $6$. By Lemma \ref{Lemma_Facts_Elliptic}(c), this contradicts the assumption that $k$ does not contain the appropriate roots of unity. We have proved that $\sigma^m \in G_0$, and hence acts as a translation on $E$. On the other hand, note that $\sigma^m$ fixes a $k$-rational point in $X$, namely $(x,y) = (a,0)$. Hence, it must also fix a point in $E$. This contradiction completes the proof.
\end{proof}

\begin{example}
We will explicitly construct the curve $X$ from the above proposition when $n=4$. In this case, $\alpha_4 = (\omega_4 + \omega_4^{-1})/2 = 0$ and $\beta_4 = -1$. It suffices to construct $X$ over the field of rational numbers. Note that the field extension $L_0/K$ satisfies $L_0 = k(x)$, $K = k(t)$ and $t = \frac{x^2-1}{2x}$. Taking $a=1$, the above construction yields the function field
$$
k(X) = L_0\left(\sqrt{(x^2+1) \frac{x^2-1}{x}}\right),
$$
whose corresponding hyperelliptic equation is $y^2 = x^5-x$. The $\ZZ/4\ZZ$-action on $X$ is given by $\sigma \cdot (x,y) \mapsto (-1/x,y/x^3)$, where $\sigma$ is a generator of $\ZZ/4\ZZ$. Note that this curve has genus $2$ and it was proved above that it does not map $G$-equivariantly to any curve of genus $\leq 1$. Hence it is an explicit example of a strongly incompressible $\ZZ/4\ZZ$-curve (recall that we are assuming $\omega_4 \not\in k$ throughout). In general, this procedure will not necessarily yield a strongly incompressible $G$-curve, but a $G$-curve that can be $G$-compressed to a strongly incompressible one. 
\end{example}

\section{Strongly incompressible curves for even dihedral groups}

\subsection{The Klein $4$-group}

Throughout this subsection, let $G$ denote the Klein $4$-group with generators $e_1, e_2$. Recall that $G$ acts faithfully on $\PP^1$ over any field $k$, but such an action is never versal. Our goal is to prove the following proposition.

\begin{prop} \label{Prop_Klein_Incompressibility}
The following are equivalent:

\begin{enumerate} [\upshape(i)]
\item There are no strongly incompressible $G$-curves over $k$.
\item $\mathrm{cd}_2(k) = 0$.
\end{enumerate}
\end{prop}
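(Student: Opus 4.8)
The plan is to prove the equivalence by establishing both implications, leaning heavily on the cohomological invariant $\Delta_\rho$ and the quaternion computations from Section~\ref{Sect_Actions_Proj_Line}. The key observation is that $G \cong (\ZZ/2\ZZ)^2$ acts faithfully on $\PP^1$ via the embeddings $\rho_{(a,b)}$, so by Corollary~\ref{Cor_Maps_P1} a faithful $G$-curve $X$ can be $G$-compressed to some $\fourIdx{}{(a,b)}{}{}{\PP^1}$ precisely when $\Delta_{\rho_{(a,b)}}(X)$ vanishes. By Lemma~\ref{Lemma_Cohom_Kleingroup}, writing $[X] = (\ol{c},\ol{d}) \in (L_0^\times/L_0^{\times 2})^2$ where $L_0 = k(X)^G$, this invariant equals $[(ac,bd)_2]$. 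Since curves of genus $0$ are exactly the images that threaten strong incompressibility (together with genus $1$, which I expect to dispose of as in the cyclic case using Lemma~\ref{Lemma_Facts_Elliptic}), the whole question reduces to understanding when one can force \emph{all} the relevant quaternion classes to be nontrivial.

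For the implication $(ii) \Rightarrow (i)$, suppose $\mathrm{cd}_2(k) = 0$, so every algebraic (hence every finitely generated, after passing to Brauer-triviality) extension is quadratically closed in the relevant sense; the point is that $H^2$ with $\GG_m$-coefficients is severely constrained and $K^\times/K^{\times 2}$ collapses over function fields in a controlled way. I would argue that any faithful $G$-curve $X$ has $[X]$ representable so that the quaternion invariants $(ac,bd)_2$ can be split over $L_0$ for a suitable choice of $(a,b)$, yielding a $G$-compression $X \to \fourIdx{}{(a,b)}{}{}{\PP^1}$; then, since $G$ is nontrivial and $\fourIdx{}{(a,b)}{}{}{\PP^1}$ admits nontrivial self-compressions (degree-raising equivariant self-maps, as constructed in Lemma~\ref{Lemma_Compr_Cyclic_Dihedral} for the dihedral/cyclic case, adapted to the Klein group), no faithful $G$-curve can be strongly incompressible. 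The cohomological vanishing is what guarantees the target genus-$0$ curve exists over $k$ in the first place.

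For the contrapositive direction $(i) \Rightarrow (ii)$, equivalently $\mathrm{cd}_2(k) > 0 \Rightarrow$ strongly incompressible $G$-curves exist, I would exhibit an explicit faithful $G$-curve $X$ that cannot be $G$-compressed to any curve of genus $\leq 1$, and then invoke Lemma~\ref{Lemma_NoCompression_Small_Genus}. The hypothesis $\mathrm{cd}_2(k) > 0$ furnishes a nontrivial quaternion algebra $(a,b)_2$ over $k$ (or over a suitable function field), and I would build a hyperelliptic $X$ analogous to the construction in Proposition~\ref{Prop_Even_Cyclic_Not_Versal_Incompressibility}: choose the class $[X] = (\ol{c},\ol{d})$ so that for every admissible pair $(a,b)$ parametrizing an embedding into $\PGL_2$, the invariant $[(ac,bd)_2]$ is nonzero over $L_0$, using Lemma~\ref{Lemma_Square_Quaternions} applied at a $k$-rational ramification point to detect nonsplitting. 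Ruling out genus-$1$ targets would proceed as in the cyclic proof: the involutions in $G$ fix the $k$-rational point of $X$, so they cannot act as fixed-point-free translations on an elliptic curve, while the structure of $\Aut_0(E) \cong \mu_n$ forbids the non-translation alternative.

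The main obstacle I anticipate is the $(i)\Rightarrow(ii)$ direction: unlike the cyclic case, the Klein group embeds into $\PGL_2$ in a \emph{family} of inequivalent ways parametrized by pairs $(\ol a,\ol b)$ with $(a,b)_2$ split, so I must simultaneously obstruct compression to \emph{every} such $\fourIdx{}{(a,b)}{}{}{\PP^1}$ rather than a single distinguished target. This requires choosing the cohomology class $[X]$ cleverly enough that $(ac,bd)_2$ fails to split over $k(X)^G$ for all admissible $(a,b)$ at once; extracting a genuine obstruction from the hypothesis $\mathrm{cd}_2(k)>0$ and verifying it against the full family via Lemma~\ref{Lemma_Square_Quaternions} is where the real work lies, and where I would need to be careful about which residues to evaluate.
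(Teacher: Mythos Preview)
Your overall strategy matches the paper's: for $(ii)\Rightarrow(i)$ you use Brauer-triviality over $k(X)^G$ (which has $\mathrm{cd}_2\le 1$) to kill $\Delta_\rho(X)$ and then exhibit a non-birational self-compression of $\fourIdx{}{\rho}{}{}{\PP^1}$; for the converse you build an explicit $X$ that cannot be $G$-compressed to genus $\le 1$ and invoke Lemma~\ref{Lemma_NoCompression_Small_Genus}. You also correctly identify the main difficulty, namely obstructing the whole family $\{\rho_{(a,b)}\}$ simultaneously. The paper does exactly this, choosing $\xi\in\ol{k}$ not a square in $k(\xi)$ (available by Lemma~\ref{Lemma_Fields_cd2}) and tailoring the defining polynomials so that Lemma~\ref{Lemma_Square_Quaternions} at well-chosen roots forces $a\in k^{\times 2}$ and $a\xi\in k(\xi)^{\times 2}$, a contradiction.

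There is, however, a genuine gap in your genus-$1$ argument. You write that ``the involutions in $G$ fix the $k$-rational point of $X$,'' but $(\ZZ/2\ZZ)^2$ can never fix a common smooth point on a curve: a faithful action on the tangent line at such a point would embed $G$ into $\GG_m$, forcing $G$ cyclic. What one actually needs is that \emph{each} of the three nontrivial elements $e_1$, $e_2$, $e_1e_2$ fixes \emph{some} (generally different) point of $X$; then, since $\Aut_0(E)$ is cyclic, at least one nontrivial element of $G$ lands in the translation subgroup of $\Aut(E)$ and would have to act freely, contradicting its fixed point on $X$. Arranging this is not automatic from a ``hyperelliptic'' construction \`a la Proposition~\ref{Prop_Even_Cyclic_Not_Versal_Incompressibility}: the paper instead builds $X$ as a \emph{biquadratic} cover of $\PP^1$ with function field $k(x)(\sqrt{xP(x)},\sqrt{xQ(x)})$ (Proposition~\ref{Prop_Curve_Klein}), imposing conditions on $P,Q$ that give smooth fixed points for $e_1$ and $e_2$ directly, and then carrying out a blow-up computation at the singular origin to produce a smooth fixed point for $e_1e_2$. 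That construction, not a hyperelliptic one, is what makes the genus-$1$ exclusion (Lemma~\ref{Lemma_Curve_Constr_Genus1}) go through.
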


\begin{proof}[Proof of (ii) $\Rightarrow$ (i)] Suppose that $k$ has cohomological $2$-dimension $0$ and let $X$ be any faithful $G$-curve. The fixed field $K = k(X)^G$ is a transcendence degree $1$ extension of $k$, whence $\mathrm{cd}_2(K) \leq 1$ by \cite[Prop. II.4.2.11]{Se02}. Then, by \cite[Prop. II.2.3.4]{Se02}, it follows that $\Br_2(K)$ is trivial. Let $\rho\colon (\ZZ/2\ZZ)^2 \hookrightarrow \PGL_2(k)$ be any embedding. We claim that $X$ can be $G$-compressed to $\fourIdx{}{\rho}{}{}{\PP^1}$. Indeed, note that $\Delta_\rho(X)$ is the class of a quaternion algebra defined over $K$ and therefore trivial. The result then follows from Corollary \ref{Cor_Maps_P1}. To conclude the proof of the sufficiency, we need to prove that $\fourIdx{}{\rho}{}{}{\PP^1}$ is not strongly incompressible. We are free to select $\rho$ conveniently, so we may assume that $\rho$ is as in \eqref{Eq_embedding_Klein2} with $b = 1$. Then, it is obvious that $(x:y) \mapsto (x^3:y^3)$ is a $G$-compression of $\fourIdx{}{\rho}{}{}{\PP^1}$ to itself that is not birational.
\end{proof}

It remains to prove that (i) $\Rightarrow$ (ii). To achieve this, we need the following result.

\begin{prop} \label{Prop_Curve_Klein}
Let $P, Q \in k[x]$ be separable polynomials of degree $\geq 1$ satisfying the following conditions.
\begin{enumerate}[\upshape(i)]
\item $P$ and $Q$ have no common roots.
\item $P(0) \neq 0$, $Q(0) \neq 0$. 
\item There exists a root $x_1 \in k$ of $P$ (resp. $x_2 \in k$ of $Q$) such that $x_1Q(x_1) \in k^{\times 2}$ (resp. $x_2 P(x_2) \in k^{\times 2}$).
\end{enumerate}
Then the curve $X$ with function field $L = K(\sqrt{x P(x)},\sqrt{x Q(x)})$, where $K = k(x)$ is a rational function field, can be endowed with a faithful $G$-action such that every element of $G$ fixes at least one geometric point of $X$. 
\end{prop}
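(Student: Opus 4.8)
The plan is to realize the required $G$-action as the Galois action of $\Gal(L/K) \cong (\ZZ/2\ZZ)^2$ on the smooth projective model $X$ of $L$, and then to produce the fixed points by examining the fibers of the quotient map $X \to X/G = \PP^1$ over three carefully chosen values of $x$. The identity fixes everything, so it suffices to fix one geometric point for each of the three involutions $e_1$, $e_2$, $e_1e_2$.

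First I would set $u = \sqrt{xP(x)}$ and $v = \sqrt{xQ(x)}$, so that $L = K(u,v)$, and check that this is genuinely a biquadratic extension: by hypothesis (ii) the functions $xP(x)$ and $xQ(x)$ each contain the factor $x$ to first order and are otherwise separable, hence are non-squares in $k(x)$; by (i) the product $P(x)Q(x)$ is separable of positive degree, so $uv = x\sqrt{P(x)Q(x)}$ generates the third quadratic subextension. The same computations performed over $\ol{k}(x)$ show that $X$ is a geometrically irreducible curve. I then define the faithful $G$-action by $e_1\colon u \mapsto -u,\ v \mapsto v$ and $e_2\colon u \mapsto u,\ v \mapsto -v$; with this choice $X/G = \PP^1$ has coordinate $x$, and the subgroups $\langle e_1\rangle$, $\langle e_2\rangle$, $\langle e_1 e_2\rangle$ fix the intermediate fields $K(v)$, $K(u)$, $K(uv)$ respectively.

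For $e_1$ I would use the root $x_1 \in k$ of $P$ supplied by (iii). Over $x = x_1$ one has $u^2 = x_1 P(x_1) = 0$, forcing $u = 0$, while $v^2 = x_1 Q(x_1)$ is a nonzero square in $k$ (nonzero since $x_1 \neq 0$ by (ii) and $Q(x_1) \neq 0$ by (i)). Hence $X$ carries the two $k$-points with $x = x_1$, $u = 0$, $v = \pm\sqrt{x_1 Q(x_1)}$, and since $e_1$ merely changes the sign of $u = 0$ it fixes both. A symmetric argument using $x_2$ produces $k$-points fixed by $e_2$. Note that condition (iii) is precisely what makes these fixed points $k$-rational; geometric fixed points of $e_1$, resp. $e_2$, exist over any root of $P$, resp. $Q$, in any case.

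The crux is $e_1 e_2$, which I would handle through the fiber over $x = 0$. Here $u^2 = v^2 = 0$, so naively $u = v = 0$; the delicate point is that this fiber consists of two distinct points, not one. Indeed $u/v$ satisfies $(u/v)^2 = P(x)/Q(x)$, which by (ii) takes the nonzero finite value $P(0)/Q(0)$ at $x = 0$, so there are exactly two points $p_\pm$ over $x = 0$, distinguished by $u/v \to \pm\sqrt{P(0)/Q(0)}$ (each ramified of index $2$, since $P(0) \neq 0$ makes $u$ a uniformizer with $x \sim u^2/P(0)$). Because $e_1 e_2$ negates both $u$ and $v$ it preserves the ratio $u/v$ and therefore fixes each $p_\pm$, whereas $e_1$ and $e_2$ negate the ratio and interchange $p_+$ with $p_-$. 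This yields the geometric fixed points of $e_1e_2$ and finishes the argument. I expect this fiber over $x = 0$ to be the only genuinely subtle step: it is the point where the two double covers branch simultaneously, and one must track the ratio $u/v$ to recognize that the common stabilizer of the two ramified points is $\langle e_1 e_2\rangle$ rather than $\langle e_1\rangle$ or $\langle e_2\rangle$.
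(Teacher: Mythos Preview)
Your argument is correct and pins down exactly the same fixed points as the paper. The paper, however, takes a more explicitly geometric route for the $e_1e_2$ step: it works with the affine model $Y \subset \AA^3$ cut out by $y^2 = xP(x)$ and $z^2 = xQ(x)$, observes that the unique affine fixed point of $e_1e_2$ is the singular origin, and then blows up $\AA^3$ at the origin, computing the strict transform in the chart with coordinate $z/y$ to find two smooth points on the exceptional divisor fixed by $e_1e_2$ (and checking smoothness via the Jacobian criterion). Your function-field argument via the ratio $u/v$ is doing the same computation---indeed $z/y$ in the paper's blowup chart is precisely your $v/u$---but packages it as a ramification analysis of the $(\ZZ/2\ZZ)^2$-cover rather than an explicit resolution. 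Your approach is shorter and avoids coordinates; the paper's makes the smoothness of the fixed points on the model visibly explicit.
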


\begin{proof}
Let $\AA^3$ be the affine $3$-space over $k$ and let $Y \subset \AA^3$ be the affine variety cut out by the ideal $I = \langle y^2 - xP(x), z^2 - xQ(x)\rangle$. Note that $Y$ is an irreducible affine curve having a unique singularity at $(0,0,0)$ and its function field is precisely $L$. We can endow $Y$ with a faithful $G$-action by setting $e_1\cdot (x,y,z) = (x,-y,z)$ and $e_2\cdot (x,y,z) = (x,y,-z)$. This action can be lifted to the unique nonsingular projective curve $X$ which is birational to $Y$, in such a way that the natural birational isomorphism $X \dashrightarrow Y$ is $G$-equivariant. Note also that $X$ can be seen as a Galois $G$-cover of $\PP^1$ induced by the inclusion $K \hookrightarrow L$.

We claim that every element of $G$ fixes at least one geometric point of $X$. We first prove the assertion for $e_1 \in G$ to illustrate the procedure. Note that $A = (x_1,0,\sqrt{x_1Q(x_1)})$ is a nonsingular $k$-rational point of $Y$ fixed by $e_1$. Therefore, the natural $G$-equivariant rational map $Y \dashrightarrow X$ must be defined at the point $A$ and its image in $X$ is fixed by $e_1$ as desired. Analogously, we see that $B = (x_2,\sqrt{x_2P(x_2)},0)$ is a nonsingular $k$-rational point of $Y$ fixed by $e_2$ and the result follows along the same lines. 

It remains to prove that $e_1 e_2$ fixes a point in $X$. Unfortunately, the only fixed point of $e_1 e_2$ in $Y$ is $O = (0,0,0)$, which is not smooth. To overcome this difficulty, we consider the blowup of $\AA^3$ at the origin $O$ with exceptional divisor $E$ and consider the strict transform $Y'$ of $Y$. The $G$-action lifts naturally to $Y'$, in such a way that the birational morphism $Y' \to Y$ is $G$-equivariant. We claim that $Y'$ has a smooth point fixed by $e_1 e_2$, which has to be contained in $Y' \cap E$. Recall that 
$$
\mathrm{Bl}_O\AA^3 = \{((x, y, z), (t_0:t_1:t_2)) \in \AA^3 \times \PP^2\:|\: x t_1 = y t_0, x t_2 = z t_0, y t_2 = z t_1\}
$$
is covered by three affine charts $U_i = \{t_i \neq 0\}$ isomorphic to $\AA^3$. We pick coordinates $y, u = t_0/t_1, v = t_2/t_1$ in $U_1$ (so that $x = y u$ and $z = y v$) and compute $Y'$ in this coordinates. Any point in $Y' \cap U_1$ must satisfy the equations $y - uP(yu) = 0$ and $yv^2 - uQ(yu) = 0$. Moreover, note that the polynomial $Q(0)(y - uP(yu)) - P(0) (y v^2 - u Q(yu))$ is divisible by $y$ and consequently we obtain that
$$
Q(0) - P(0) v^2 - u^2 Q(0) P_1(y u) + u^2 P(0) Q_1(y u) = 0,
$$
for all points $(y,u,v) \in Y' \cap U_1$, where $P_1(x) = (P(x) - P(0))/x$ and $Q_1(x) = (Q(x) - Q(0))/x$. Then it is easy to see that the above three equations define $Y' \cap U_1$ and that $Y' \cap U_1 \cap E = \{(0, 0, \pm\sqrt{Q(0)/P(0)})\}$. (Actually one can see by looking at the other two charts that $Y' \cap E$ consists only of these two points.) We now look at the $G$-action on $Y' \cap U_1$. Note that $e_1 e_2 \cdot (y,u,v) = (-y,-u,v)$, since $e_1 e_2 \cdot (x,y,z) = (x,-y,-z)$ in $Y$. Therefore, the points $(0, 0, \pm\sqrt{P(0)/Q(0)})$ are fixed by $e_1 e_2$. Moreover, by applying the Jacobian criterion to the three polynomials defining $Y' \cap U_1$, one can show that both points are smooth; the details are left to the reader. Since the $G$-equivariant rational map $Y' \to Y \dashrightarrow X$ is defined at all smooth points, it follows that $e_1 e_2$ has a fixed point in $X$. 
\end{proof}

\begin{lemma} \label{Lemma_Curve_Constr_Genus1}
Let $X$ be any (smooth projective) $G$-curve obtained from Proposition \ref{Prop_Curve_Klein}. Then $X$ cannot be $G$-compressed to any curve of genus $1$.
\end{lemma}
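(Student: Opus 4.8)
The plan is to prove the contrapositive-style statement directly: assume for contradiction that a curve $X$ constructed in Proposition \ref{Prop_Curve_Klein} admits a $G$-compression $X \to E$ onto a genus $1$ curve $E$ carrying a faithful $G$-action, and derive a contradiction. The key structural input is Lemma \ref{Lemma_Facts_Elliptic}: since $G = (\ZZ/2\ZZ)^2$ embeds in $\Aut(E_{\ol k})$, we use the split exact sequence \eqref{Eq_Exact_Seq_Elliptic} to decompose the image of $G$. Because $\Aut_0(E)$ is cyclic (isomorphic to some $\mu_n$), it has a unique subgroup of order $2$, so the noncyclic group $G$ cannot inject into $\Aut_0(E)$. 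Hence $G \cap i(E_{\ol k}) \neq \{1\}$, i.e.\ at least one nontrivial element of $G$ acts as a translation on $E$.

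\medskip
\noindent\textbf{The main contradiction.} A nontrivial translation of an elliptic curve has no fixed geometric points; it acts freely. This clashes directly with the fixed-point property built into $X$ by Proposition \ref{Prop_Curve_Klein}: every element of $G$ fixes a geometric point of $X$, and a $G$-compression $X \to E$ sends a fixed point of $g$ on $X$ to a fixed point of $g$ on $E$ (a dominant $G$-equivariant morphism of curves carries $g$-fixed points forward). So whichever nontrivial $g \in G$ lands in $i(E_{\ol k})$ would have to fix a point of $E$ and simultaneously act freely on it --- impossible. This is the heart of the argument and it mirrors the proof of Theorem \ref{Thm_Incompressible_No_Action_Genus0}.

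\medskip
\noindent\textbf{The delicate point.} The subtlety, and where I expect the real work to lie, is that $G = (\ZZ/2\ZZ)^2$ has \emph{three} nontrivial elements, and a priori only \emph{one} of them need be a translation: the image of $G$ in $\Aut_0(E_{\ol k})$ can be a cyclic group of order $2$, so the intersection $G \cap i(E_{\ol k})$ is a subgroup of order $2$ rather than all of $G$. I would therefore first determine which element of $G$ is forced to be a translation. Since $G \cap i(E_{\ol k})$ is the kernel of $\pi|_G \colon G \to \Aut_0(E_{\ol k})$ and has order $2$, it is generated by exactly one of $e_1$, $e_2$, $e_1 e_2$. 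The construction in Proposition \ref{Prop_Curve_Klein} guarantees that \emph{all three} of these elements fix a geometric point of $X$ (the points $A$, $B$, and the two exceptional points found on the strict transform $Y'$, respectively). Consequently, no matter which nontrivial element generates $G \cap i(E_{\ol k})$, that element fixes a point of $X$ and hence a point of $E$, contradicting the freeness of nontrivial translations. This uniformity across all three involutions is precisely why Proposition \ref{Prop_Curve_Klein} was stated with the full fixed-point conclusion for every element of $G$, and it is the ingredient that closes the genus $1$ case cleanly.

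\medskip
\noindent\textbf{Loose ends.} Two routine verifications remain. First, I should pass to the algebraic closure at the outset (as in Theorem \ref{Thm_Incompressible_No_Action_Genus0}) so that $\Aut(E_{\ol k})$ and Lemma \ref{Lemma_Facts_Elliptic} apply, noting that a $G$-compression over $k$ base-changes to a $G$-compression over $\ol k$. Second, I should confirm the elementary fact that a dominant $G$-equivariant morphism $f \colon X \to E$ of curves maps a $g$-fixed point of $X$ to a $g$-fixed point of $E$: if $f(p)$ were moved by $g$, then $f(p) = f(g\cdot p) = g \cdot f(p)$ forces $f(p)$ to be fixed, so this is immediate from equivariance. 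With these in place the contradiction is complete, proving that $X$ admits no $G$-compression to a genus $1$ curve.
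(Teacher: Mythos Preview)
Your argument is correct and follows essentially the same route as the paper: assume a $G$-compression $X \to E$ exists, observe via Lemma~\ref{Lemma_Facts_Elliptic}(a)(b) that since $\Aut_0(E)$ is cyclic it cannot contain $(\ZZ/2\ZZ)^2$, hence some nontrivial element of $G$ acts as a translation and therefore freely on $E$, contradicting the fixed-point property of $X$ from Proposition~\ref{Prop_Curve_Klein}. The paper's proof is more terse (three sentences) and handles the origin on $E$ by noting that $X$ has $k$-rational points rather than by base-changing to $\ol k$, but the substance is the same.
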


\begin{proof}
Suppose that such a $G$-compression $X \to E$ exists. We may assume that $E$ is an elliptic curve since $X$ has $k$-rational points. By parts (a) and (b) of Lemma \ref{Lemma_Facts_Elliptic}, some element of $G$ must act freely on $E$. This contradicts the fact that every element of $G$ fixes a point in $X$.
\end{proof}

We are ready to prove that (i) $\Rightarrow$ (ii) in Proposition \ref{Prop_Klein_Incompressibility}. Suppose that $k$ does not have cohomological $2$-dimension $0$. We will produce a faithful $G$-curve that cannot be $G$-compressed to any $G$-curve of genus $\leq 1$ by using Proposition \ref{Prop_Curve_Klein}. The following well known lemma provides more manageable conditions on $k$.

\begin{lemma} \label{Lemma_Fields_cd2}
Let $k$ be a field. The following are equivalent:
\begin{enumerate} [\upshape(i)]
\item $\mathrm{cd}_2(k) = 0$.
\item $k$ is hereditarily quadratically closed, i.e., every algebraic extension of $k$ is quadratically closed.
\item $\xi$ is a square in $k(\xi)$ for every $\xi \in \ol{k}$.
\item $\Br_2(k(x)) = 0$.
\end{enumerate}
\end{lemma}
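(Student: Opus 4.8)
The plan is to establish the four-way equivalence by combining three citable facts with a single genuinely new argument, arranged as the cycle (i) $\Leftrightarrow$ (ii) $\Leftrightarrow$ (iii) together with (i) $\Rightarrow$ (iv) $\Rightarrow$ (iii). Since any one of (i), (ii), (iii) then yields all of them, splicing (iv) in via these two implications suffices. The equivalence (i) $\Leftrightarrow$ (ii) is precisely \cite[Lemma 2]{EW87}, already recalled in the introduction, so I would take it as given and concentrate on the remaining links.

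First I would dispatch (ii) $\Leftrightarrow$ (iii) by elementary field theory. For (ii) $\Rightarrow$ (iii): given $\xi \in \ol{k}$, the field $k(\xi)$ is an algebraic extension of $k$, hence quadratically closed, so in particular $\xi$ is a square in $k(\xi)$. For the converse, let $E/k$ be any algebraic extension and $\eta \in E$; then $\eta \in \ol{k}$, so (iii) gives $\eta \in k(\eta)^{\times 2}$, and since $k(\eta) \subseteq E$ we obtain $\eta \in E^{\times 2}$. As $\eta$ is arbitrary, $E$ is quadratically closed. Next, (i) $\Rightarrow$ (iv) is the same argument used in the (ii) $\Rightarrow$ (i) part of Proposition \ref{Prop_Klein_Incompressibility}: if $\mathrm{cd}_2(k) = 0$, then $k(x)$ has transcendence degree $1$ over $k$, so $\mathrm{cd}_2(k(x)) \le 1$ by \cite[Prop. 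II.4.2.11]{Se02}, and therefore $\Br_2(k(x)) = 0$ by \cite[Prop. II.2.3.4]{Se02}.

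The crux is (iv) $\Rightarrow$ (iii), which I would prove by contraposition using Lemma \ref{Lemma_Square_Quaternions}. Suppose (iii) fails, so there is some $\xi \in \ol{k}$ that is not a square in $k(\xi)$; note that $\xi \neq 0$, since $0$ is a square. Let $g \in k[x]$ be the minimal polynomial of $\xi$ over $k$, which is separable because $k$ has characteristic $0$, and set $f(x) = x$, which is also separable. I claim the quaternion algebra $(x, g(x))_2$ is nonsplit over $k(x)$: if it were split, then Lemma \ref{Lemma_Square_Quaternions} applied to the root $\xi$ of $g$ would force $f(\xi) = \xi$ to be a square in $k(\xi)$, contradicting the choice of $\xi$. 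Since a nonsplit quaternion algebra represents a nontrivial $2$-torsion class in the Brauer group, we conclude $\Br_2(k(x)) \neq 0$, i.e., (iv) fails. This closes the cycle (i) $\Rightarrow$ (iv) $\Rightarrow$ (iii) $\Leftrightarrow$ (ii) $\Leftrightarrow$ (i).

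I expect (iv) $\Rightarrow$ (iii) to be the only nonroutine step, everything else being either cited or immediate. The difficulty is to manufacture a nontrivial element of $\Br_2(k(x))$ out of the mere existence of a non-square element; the decisive move is to feed the minimal polynomial $g$ into Lemma \ref{Lemma_Square_Quaternions} together with the deliberate choice $f(x) = x$, so that the lemma's conclusion ``$f(\xi)$ is a square in $k(\xi)$'' specializes to exactly the statement ``$\xi$ is a square in $k(\xi)$'' that we wish to negate.
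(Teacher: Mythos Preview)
Your proposal is correct and follows essentially the same route as the paper: both use \cite[Lemma 2]{EW87} for (i) $\Leftrightarrow$ (ii), treat (ii) $\Leftrightarrow$ (iii) as elementary, deduce (i) $\Rightarrow$ (iv) from the Serre cohomological-dimension bounds, and for (iv) $\Rightarrow$ (iii) apply Lemma \ref{Lemma_Square_Quaternions} to the quaternion algebra $(x, g(x))_2$ with $g$ the minimal polynomial of a hypothetical non-square $\xi$. The only cosmetic difference is that you spell out the (ii) $\Leftrightarrow$ (iii) step, which the paper leaves to the reader.
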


\begin{proof}
The equivalence (ii) $\Leftrightarrow$ (iii) is straightforward and left to the reader, while (i) $\Leftrightarrow$ (ii) follows directly from \cite[Lemma 2]{EW87}. We now prove that (i) $\Rightarrow$ (iv). If $\mathrm{cd}_2(k) = 0$, it follows from \cite[Prop. II.4.1.11]{Se02} that $\mathrm{cd}_2(k(x)) \leq 1$. Then we conclude that $\Br_2(k(x)) = 0$ by \cite[Prop. II.2.3.4]{Se02}. To complete the proof, it suffices to show that (iv) $\Rightarrow$ (iii). Suppose that (iv) holds, but there exists $\xi \in \ol{k}$, which is not a square in $k(\xi)$. Let $h \in k[x]$ be the minimal polynomial of $\xi$ over $k$. The quaternion algebra $(x,h(x))_2$ must be split over $k(x)$, which implies that $\xi$ is a square over $k(\xi)$ by Lemma \ref{Lemma_Square_Quaternions}. This contradiction completes the proof.
\end{proof}

\begin{constr} \label{Constr_Curve_Klein}
In view of Lemma \ref{Lemma_Fields_cd2}, given that $\mathrm{cd}_2(k) \neq 0$, we can choose an element $\xi$ algebraic over $k$, which is not a square in $k(\xi)$. Let $h \in k[x]$ be the minimal polynomial of $\xi$ over $k$ and define polynomials
\begin{eqnarray*}
P(x) &=& (x-\alpha) \left(\frac{(x-\alpha-1)h(x-\alpha) + (\alpha+1)h(-\alpha)}{(\alpha+1) h(-\alpha) x}\right),\\
Q(x) &=& \alpha(\alpha+1-x)h(0)h(x-\alpha),
\end{eqnarray*}
where $\alpha \in k$ is taken such that $P$ has no multiple roots, $P(0) \neq 0$ and $Q(0) \neq 0$. (It is not hard to see that such a selection of $\alpha$ is always possible.) We conclude that $P$ and $Q$ satisfy the conditions of Proposition \ref{Prop_Curve_Klein}; in what follows, let $X$ denote the corresponding curve.
\end{constr}

\begin{lemma} \label{Lemma_Curve_Constr_Genus0}
Let $X$ be the curve obtained in Construction \ref{Constr_Curve_Klein}. Then there is no $G$-compression $X \to Y$, where $Y$ is a curve of genus $0$.
\end{lemma}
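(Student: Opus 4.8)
The plan is to reduce the statement to the non-splitting of one explicit quaternion algebra over $k(x)$, and then to squeeze out a contradiction with the defining property of $\xi$ via Lemma~\ref{Lemma_Square_Quaternions}. First, since $X$ carries $k$-rational points (produced in Proposition~\ref{Prop_Curve_Klein}) and a $G$-compression of curves is a surjective $k$-morphism, any genus $0$ image $Y$ would be a faithful $G$-curve possessing a $k$-point, hence $Y \cong \fourIdx{}{\rho_{(a,b)}}{}{}{\PP^1}$ for some embedding $\rho_{(a,b)}\colon G \hookrightarrow \PGL_2(k)$ with $(a,b)_2$ split. By Corollary~\ref{Cor_Maps_P1}, such a compression exists if and only if $\Delta_{\rho_{(a,b)}}(X) = 1$. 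Writing $K = k(x)$, identifying $H^1(K,G)$ with $(K^\times/K^{\times 2})^2$ via the generators $e_1, e_2$, and reading off from the $G$-action $e_1\cdot(x,y,z) = (x,-y,z)$, $e_2\cdot(x,y,z) = (x,y,-z)$ on $L = K(\sqrt{xP(x)}, \sqrt{xQ(x)})$ that $[X] = (\overline{xP(x)}, \overline{xQ(x)})$, Lemma~\ref{Lemma_Cohom_Kleingroup} gives $\Delta_{\rho_{(a,b)}}(X) = [(axP(x), bxQ(x))_2]$. It therefore suffices to show that $(axP(x), bxQ(x))_2$ is non-split over $k(x)$ for every $a, b \in k^\times$.

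Second, I would assume for contradiction that this algebra splits and apply Lemma~\ref{Lemma_Square_Quaternions} with $f(x) = axP(x)$ and $g(x) = bxQ(x)$ — both separable, since $P, Q$ are separable, share no roots, and are nonzero at $0$ — at two carefully chosen roots of $g$. Evaluating at the $k$-rational root $x = \alpha+1$ of $Q$ and using the construction's identity $P(\alpha+1) = 1/(\alpha+1)$ yields $f(\alpha+1) = a$, so the lemma forces $a \in k^{\times 2}$. Evaluating instead at the root $x = \alpha+\xi$ of $Q$ (arising from the factor $h(x-\alpha)$, as $h(\xi)=0$) and using $P(\alpha+\xi) = \xi/(\alpha+\xi)$ yields $f(\alpha+\xi) = a\xi$, so the lemma forces $a\xi \in k(\alpha+\xi)^{\times 2} = k(\xi)^{\times 2}$. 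Combining the two gives $\xi \in k(\xi)^{\times 2}$, contradicting the choice of $\xi$ in Construction~\ref{Constr_Curve_Klein}. The genericity conditions imposed on $\alpha$ — in particular $Q(0) \neq 0$, which forces $\alpha \neq -1$ and $h(-\alpha) \neq 0$ — guarantee that $\alpha+1$ and $\alpha+\xi$ are genuine nonzero roots of $g$, distinct from the common root $x=0$, so that $f$ is nonzero there and both conclusions are non-vacuous.

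The main difficulty is bookkeeping rather than conceptual: the whole argument rests on the two evaluations $P(\alpha+1) = 1/(\alpha+1)$ and $P(\alpha+\xi) = \xi/(\alpha+\xi)$, which is exactly what the apparently opaque numerator $(x-\alpha-1)h(x-\alpha) + (\alpha+1)h(-\alpha)$ of $P$ was engineered to produce: the term $(x-\alpha-1)h(x-\alpha)$ vanishes at both $x=\alpha+1$ and $x=\alpha+\xi$, so at these points the bracketed fraction collapses to $1/x$ and $P$ reduces to $(x-\alpha)/x$. I would verify these two identities by direct substitution, separately confirm the separability of $f,g$ and the claimed form of $[X]$, and note that the hypothesis $(a,b)_2$ split is never actually used — the contradiction rules out splitting of $(axP(x), bxQ(x))_2$ for \emph{all} $a,b \in k^\times$, which is strictly stronger than what is needed.
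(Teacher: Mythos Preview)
Your proposal is correct and follows essentially the same route as the paper's proof: reduce to $Y\cong\fourIdx{}{\rho_{(a,b)}}{}{}{\PP^1}$ using the $k$-rational points on $X$, identify $[X]=(\ol{xP(x)},\ol{xQ(x)})$, invoke Lemma~\ref{Lemma_Cohom_Kleingroup} and Corollary~\ref{Cor_Maps_P1} to obtain the split quaternion algebra $(axP(x),bxQ(x))_2$, and then apply Lemma~\ref{Lemma_Square_Quaternions} at the roots $\alpha+1$ and $\alpha+\xi$ of $Q$ to force $a\in k^{\times 2}$ and $a\xi\in k(\xi)^{\times 2}$. Your explicit verification of the identities $P(\alpha+1)=1/(\alpha+1)$ and $P(\alpha+\xi)=\xi/(\alpha+\xi)$, and your remark that the engineered numerator of $P$ was designed precisely so that the bracketed fraction collapses to $1/x$ at these two points, are helpful additions that the paper leaves implicit.
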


\begin{proof}
By construction, $X$ has $k$-rational points. Hence, such a $G$-compression could only be possible if $Y \cong \fourIdx{}{\rho}{}{}{\PP^1}$ for some embedding $\rho\colon G \hookrightarrow \PGL_2(k)$.
From Proposition \ref{Prop_Curve_Klein}, we observe that $k(X)^G = K = k(x)$. Moreover, note that the class $[X] \in H^1(K,G)$ corresponds to $(\ol{xP(x)},\ol{xQ(x)}) \in (K^\times/K^{\times 2})^2$. By Lemma \ref{Lemma_Cohom_Kleingroup}, we obtain that $\Delta_\rho(X) = [(a x P(x),b x Q(x))_2] \in \Br(K)$ for some $(\ol{a},\ol{b}) \in (k^\times/k^{\times 2})^2$ such that $(a,b)_2$ is split.

Suppose that there exists a $G$-compression $X \to \fourIdx{}{\rho}{}{}{\PP^1}$. Then, by Corollary \ref{Cor_Maps_P1}, the quaternion algebra $(a x P(x), b x Q(x))_2$ must be split over $K$. Applying Lemma \ref{Lemma_Square_Quaternions} to the roots $\alpha+1$ and $\alpha + \xi$ of $b x Q(x)$, we obtain that $a \in k^{\times 2}$ and $a \xi \in k(\xi)^{\times 2}$, respectively. This contradicts the assumption that $\xi$ is not a square in $k(\xi)$.
\end{proof}

To finish the proof of Proposition \ref{Prop_Klein_Incompressibility}, we use Lemma \ref{Lemma_Curve_Constr_Genus1} and Lemma \ref{Lemma_Curve_Constr_Genus0} to conclude that $X$ cannot be $G$-compressed to any curve of genus $\leq 1$. Thus, it follows from Lemma \ref{Lemma_NoCompression_Small_Genus} that there exist strongly incompressible $G$-curves if $\mathrm{cd}_2(k) > 0$. The proof is now complete.

\subsection{Even dihedral groups of order $\geq 8$}

In this subsection, $G$ will always denote the dihedral group $D_{2n}$, where $n \geq 4$ is an even integer. A result similar to Proposition \ref{Prop_Klein_Incompressibility} holds in this case.

\begin{prop} \label{Prop_Even_Dihedral_Incompressibility} Let $k$ be a field such that $\alpha_n \in k$. Then there exist no strongly incompressible $G$-curves defined over $k$ if and only if $\mathrm{cd}_2(k)=0$.
\end{prop}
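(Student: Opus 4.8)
The plan is to prove both implications through Lemma~\ref{Lemma_NoCompression_Small_Genus}, in close analogy with the Klein case (Proposition~\ref{Prop_Klein_Incompressibility}). For the direction $\mathrm{cd}_2(k)=0\Rightarrow$ (no strongly incompressible $G$-curves) I would repeat the argument proving (ii)$\Rightarrow$(i) there. Given any faithful $G$-curve $X$, the field $K=k(X)^G$ has transcendence degree $1$ over $k$, so $\mathrm{cd}_2(K)\le 1$ and $\Br_2(K)=0$ by \cite[Prop.~II.4.2.11, Prop.~II.2.3.4]{Se02}. Fixing the embedding $\rho$ of \eqref{Eq_Embedding_Dihedral_1} (available since $\alpha_n\in k$), the class $\Delta_\rho(X)$ lies in the image of $H^1(K,\PGL_2)\to\Br(K)$, hence is a quaternion class, hence $2$-torsion, hence trivial. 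Corollary~\ref{Cor_Maps_P1} then gives a $G$-compression $X\dashrightarrow\fourIdx{}{\rho}{}{}{\PP^1}$, and composing it with the self-$G$-compression of degree $n+1$ furnished by Lemma~\ref{Lemma_Compr_Cyclic_Dihedral} yields a $G$-compression of $X$ of degree $\ge n+1$, which is not birational; thus $X$ is not strongly incompressible.

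For the converse I would assume $\mathrm{cd}_2(k)>0$ and fix, via Lemma~\ref{Lemma_Fields_cd2}, an element $\xi$ algebraic over $k$ with $\xi\notin k(\xi)^{\times2}$. The aim is a faithful $G$-curve $X$ with a $k$-rational point such that (A) every element of $G$ fixes a geometric point and (B) $X$ admits no $G$-compression to any $\fourIdx{}{\rho}{}{}{\PP^1}$. Then Lemma~\ref{Lemma_NoCompression_Small_Genus} finishes the proof: a $k$-point forces every genus-$0$ image to be some $\fourIdx{}{\rho}{}{}{\PP^1}$, excluded by (B), while genus-$1$ images are excluded by (A) exactly as in Lemma~\ref{Lemma_Curve_Constr_Genus1}. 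Indeed, a compression to an elliptic curve $E$ would embed $D_{2n}$ into $\Aut(E)=E\rtimes\Aut_0(E)$ with $\Aut_0(E)$ cyclic (Lemma~\ref{Lemma_Facts_Elliptic}); since $D_{2n}$ is not cyclic we get $G\cap E\neq 1$, producing a translation acting freely on $E$, contradicting that every element of $G$ fixes a point of $X$.

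The heart of (B) is a reduction to the Klein $4$-subgroup $V=\langle\sigma^{n/2},\tau\rangle$. Because the embedding of $\langle\sigma\rangle\cong\ZZ/n\ZZ$ is unique up to conjugacy, for \emph{every} embedding $\rho\colon D_{2n}\hookrightarrow\PGL_2(k)$ the central involution $\rho(\sigma^{n/2})$ is conjugate to $\left(\begin{smallmatrix}0&\beta_n\\1&0\end{smallmatrix}\right)$, so $\rho|_V=\rho_{(a,\beta_n)}$ with the parameter $\beta_n$ forced by $\sigma^{n/2}$ and $a\in k^\times$ the free parameter attached to $\tau$. Any $G$-compression to $\fourIdx{}{\rho}{}{}{\PP^1}$ restricts to a $V$-compression, so by Proposition~\ref{Prop_Maps_PV}(a) and Lemma~\ref{Lemma_Cohom_Kleingroup} its existence forces the quaternion algebra $(a\,c,\beta_n\,d)_2$ to split over $F=k(X)^V$, where $(\ol c,\ol d)=[X]_V$. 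I would therefore build $X$ so that $F=k(x)$ is rational and $[X]_V=(\ol{xP(x)},\ol{xQ(x)})$ for the very polynomials $P,Q$ of Construction~\ref{Constr_Curve_Klein} (with $\sigma^{n/2}$ and $\tau$ negating $\sqrt{xP}$ and $\sqrt{xQ}$ respectively). The computation of Lemma~\ref{Lemma_Curve_Constr_Genus0} then applies verbatim: evaluating the splitting relation at the roots $\alpha+1$ and $\alpha+\xi$ of $\beta_n\,xQ(x)$ via Lemma~\ref{Lemma_Square_Quaternions} gives $a\in k^{\times2}$ and $a\xi\in k(\xi)^{\times2}$ simultaneously, whence $\xi\in k(\xi)^{\times2}$ --- a contradiction valid for every value of $a$ at once, so no enumeration of the embeddings $\rho$ is needed.

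The main obstacle, and the only genuinely new work, is the geometric construction itself: producing a single faithful $D_{2n}$-curve $X$ that simultaneously (i) has $k(X)^V=k(x)$ with the prescribed restricted class $[X]_V=(\ol{xP},\ol{xQ})$, (ii) has a $k$-rational point, and (iii) realizes every cyclic subgroup of $D_{2n}$ --- the rotations $\sigma^i$ and \emph{both} conjugacy classes of reflections --- as a fixed-point (inertia) group. Since $V$ is not normal in $D_{2n}$ for $n>4$, one cannot simply twist a class over $X/V$ as in Proposition~\ref{Prop_Even_Cyclic_Not_Versal_Incompressibility}; instead I would produce an explicit $D_{2n}$-equivariant model over the quotient line, modeled on the affine curve and blow-up of Proposition~\ref{Prop_Curve_Klein} but adapted to the dihedral symmetry (using that, after the coordinate change of Lemma~\ref{Lemma_Compr_Cyclic_Dihedral}, the rotation-fixed orbit $\{0,\infty\}$ and the two reflection-fixed loci are defined over $k$), and then verify the fixed-point property for the remaining elements by a direct ramification computation and check that the $V$-restriction computes to the desired Klein class. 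Granting this construction, (A) and (B) hold and the proof is complete.
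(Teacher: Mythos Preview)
Your forward direction ($\mathrm{cd}_2(k)=0\Rightarrow$ no strongly incompressible $G$-curves) is correct and is exactly the paper's argument.

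For the converse, there is a genuine gap. You yourself identify ``the only genuinely new work'' as the construction of a $D_{2n}$-curve $X$ with $k(X)^V=k(x)$ rational, with restricted class $[X]_V=(\ol{xP(x)},\ol{xQ(x)})$ for the \emph{specific} polynomials $P,Q$ of Construction~\ref{Constr_Curve_Klein}, and with every element of $D_{2n}$ fixing a geometric point---and then you do not carry it out. This is not a routine verification. Since $V=\langle\sigma^{n/2},\tau\rangle$ is not normal in $D_{2n}$ for $n>4$, there is no mechanism for \emph{prescribing} the $V$-class and then extending the $V$-action to a $D_{2n}$-action; the affine model $y^2=xP(x)$, $z^2=xQ(x)$ of Proposition~\ref{Prop_Curve_Klein} has no visible dihedral symmetry, and ``adapting it to the dihedral symmetry'' may well force changes to $P,Q$ that destroy the arithmetic coincidences (conditions (i)--(iii) and the role of $\xi$) on which the argument of Lemma~\ref{Lemma_Curve_Constr_Genus0} depends. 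Without an actual construction, the proof is incomplete.

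The paper proceeds differently and avoids this obstacle. It always builds $X$ by twisting $\fourIdx{}{\rho}{}{}{\PP^1}$ along the \emph{central} (hence normal) subgroup $\langle\sigma^{n/2}\rangle$, so that $X$ is automatically a hyperelliptic $D_{2n}$-curve and its $V$-class over $k(X)^V$ can be \emph{computed} rather than prescribed. Two cases arise. If $\omega_n\notin k$, the obstruction to compressing to genus $0$ is already visible on the center $\langle\sigma^{n/2}\rangle$: by Corollary~\ref{Corollary_Cohom_2group} one would need $\beta_n\in k^{\times2}$. If $\omega_n\in k$, the paper chooses the twist parameter so that the computed $V$-class over $k(s)$ (with $s=(x+x^{-1})/2$) involves $f(T_m(s))$ for a polynomial $f$ vanishing at $T_m(\gamma)$ and $T_m(\delta)$, where $\gamma^2=1+\xi$ and $\delta^2=1+\xi^{-1}$; applying Lemma~\ref{Lemma_Square_Quaternions} at $s=\gamma$ and $s=\delta$ then forces $\xi\in k(\xi)^{\times2}$. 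For genus $1$, the paper does \emph{not} require every element of $G$ to have a fixed point: it only uses that the hyperelliptic involution $\sigma^{n/2}$ fixes a $k$-point (immediate from the construction), and then shows via Lemma~\ref{Lemma_Facts_Elliptic} that in any embedding $D_{2n}\hookrightarrow\Aut(E)$ the element $\sigma$ itself must act by translation, so $\sigma^{n/2}$ cannot fix any point of $E$. This is both weaker as a hypothesis on $X$ and sharper as a conclusion than your property~(A).
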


\begin{proof}
Suppose first that $\mathrm{cd}_2(k)=0$. Similarly to the proof of Proposition \ref{Prop_Klein_Incompressibility}, it follows that any faithful $G$-curve $X$ can be $G$-compressed to $\fourIdx{}{\rho}{}{}{\PP^1}$, where the embedding $\rho\colon G \hookrightarrow \PGL_2(k)$ is as in \eqref{Eq_Embedding_Dihedral_1}. Moreover, it follows from Lemma \ref{Lemma_Compr_Cyclic_Dihedral} that $\fourIdx{}{\rho}{}{}{\PP^1}$ is not strongly incompressible.

To prove the converse, assume that $\mathrm{cd}_2(k)>0$. We must show that there exists a strongly incompressible $G$-curve under this assumption. We first study the special case where $\omega_n \not\in k$, i.e., $\beta_n$ is not a square in $k$. We only sketch the argument, as it is very similar to the proof of Proposition \ref{Prop_Even_Cyclic_Not_Versal_Incompressibility}. Using the cohomology sequence associated to the central exact sequence $1 \to \ZZ/2\ZZ \to G \to D_n \to 1$, one can construct a hyperelliptic $G$-curve $X$, having a $k$-rational point fixed by the hyperelliptic involution. Suppose that $X$ can be $G$-compressed to $\fourIdx{}{\eta}{}{}{\PP^1}$, where $\eta$ is any embedding $G \hookrightarrow \PGL_2(k)$. Regard the $G$-compression as a $\ZZ/2\ZZ$-compression \emph{with respect to the center of $G$}. As a $\ZZ/2\ZZ$-variety, $\fourIdx{}{\eta}{}{}{\PP^1}$ is isomorphic to $\fourIdx{}{\beta_n}{}{}{\PP^1}$. As in the proof of Proposition \ref{Prop_Even_Cyclic_Not_Versal_Incompressibility}, we must have $\beta_n \in k^{\times 2}$, contradicting our assumption. Similarly, it follows that $X$ cannot be $G$-compressed to any curve of genus $1$. By Lemma \ref{Lemma_NoCompression_Small_Genus}, there exists a strongly incompressible $G$-curve in this case.

In what follows, assume that $\omega_n \in k$. By Lemma \ref{Lemma_Fields_cd2}, there exists $\xi \in \ol{k}$ such that $\xi$ is not a square in $k(\xi)$. Using this information, we construct a hyperelliptic $G$-curve $X$ that cannot be $G$-compressed to any curve of genus $\leq 1$. Let $m=n/2$, let $K=k(t)$ be the rational function field and consider the exact sequence $1 \to \ZZ/2\ZZ \to G \to D_{2m} \to 1$, which yields an exact sequence in cohomology
$$
\xymatrix{
1 \ar[r] & H^1(K,\ZZ/2\ZZ) \ar[r] & H^1(K,G) \ar[r] & H^1(K,D_{2m}).
}
$$
Also, let $\rho\colon G \hookrightarrow \PGL_2(k)$ be given by
\begin{equation*}
\sigma \mapsto \left(\begin{array}{cc}
\omega_n & 0\\
0 & 1
\end{array}\right), \quad
\tau \mapsto \left(\begin{array}{cc}
0 & 1\\
1 & 0
\end{array}\right).
\end{equation*}
(Note that this embedding is conjugate to the one in \eqref{Eq_Embedding_Dihedral_1}.) The image of the class $[\fourIdx{}{\rho}{}{}{\PP^1}] \in H^1(K,G)$ in $H^1(K,D_{2m})$ is the class of the $D_{2m}$-variety $Y = \fourIdx{}{\rho}{}{}{\PP^1}/(\ZZ/2\ZZ)$ and the quotient map $\fourIdx{}{\rho}{}{}{\PP^1} \to Y$ is simply $(x:y) \mapsto (x^2:y^2)$.

Let $L = k(\fourIdx{}{\rho}{}{}{\PP^1}) = k(u)$, which is a $G$-Galois extension of $K$. The $D_{2m}$-torsor $Y \to Y/D_{2m}$ corresponds to a $D_{2m}$-Galois extension $L_0/K$, where $L_0 = L^{\ZZ/2\ZZ}$. Hence, we can write $L_0 = k(x)$, where $x = u^2$. An element $c \in K^\times/K^{\times 2}$ acts on the fibre of $[Y]$ by replacing $L = L_0(\sqrt{x})$ by $L_0(\sqrt{x f(t)})$, where $f(t)$ is any representative of $c$. 

A simple computation shows that we can take $t = (x^m + x^{-m})/2$, which is a polynomial in $(x+x^{-1})/2$. Denote such polynomial by $T_m$. Select $\gamma, \delta \in \ol{k}$ such that $\gamma^2 = 1+\xi$ and $\delta^2=1+\xi^{-1}$. Then, let $f \in k[t]$ be a separable polynomial vanishing at the points $T_m(\gamma)$ and $T_m(\delta)$. We define $c$ to be the class of $f(t)$ in $K^\times/K^{\times 2}$ and denote by $X$ the corresponding hyperelliptic curve, whose function field is equal to $L_0(\sqrt{x f(t)})$. Multiplying $f$ by an element of $k$ if necessary, we may assume that $X$ has a $k$-rational point fixed by the hyperelliptic involution.

The conjugacy classes of embeddings $D_{2n} \hookrightarrow \PGL_2(k)$ are parametrized by the set $D(\langle 1,-\beta_n\rangle)$ of nonzero square classes represented by the binary quadratic form $x^2-\beta_n y^2$. The correspondence is as follows: to the class $\ol{a}$ of the element $a = x^2-\beta_n y^2$ ($x, y \in k$), we assign
\begin{equation} \label{Eq_embedding_dihedral_general}
\rho_a\colon \sigma \mapsto \left(\begin{array}{cc}
\alpha_n + 1 & \beta_n\\
1 & \alpha_n + 1
\end{array}\right), \quad
\tau \mapsto \left(\begin{array}{cc}
x & -y\beta_n\\
y & -x
\end{array}\right),
\end{equation}
where $\sigma, \tau \in D_{2n}$ satisfy $\sigma^n = \tau^2 = (\sigma\tau)^2 = 1$ (see \cite{Beau10, Ga13}). We claim that $X$ cannot be $G$-equivariantly compressed to  $\fourIdx{}{\eta}{}{}{\PP^1}$, where $\eta$ is the embedding \eqref{Eq_embedding_dihedral_general} for some class $\ol{a}$. Note that we are assuming that $\beta_n$ is a square in $k$, so the binary form $\langle 1, -\beta_n \rangle$ is universal and therefore, $\ol{a}$ can be any element of $k^\times/k^{\times 2}$. Since we have $\omega_n \in k$, the embedding $\eta$ can be conjugated to
$$
\sigma \mapsto \left(\begin{array}{cc}
\omega_n & 0\\
0 & 1
\end{array}\right), \quad
\tau \mapsto \left(\begin{array}{cc}
0 & a\\
1 & 0
\end{array}\right),
$$
so we may assume that $\eta$ is of this form. Suppose for the sake of contradiction that there exists a $G$-compression $X \to \fourIdx{}{\eta}{}{}{\PP^1}$ and regard it as a $(\ZZ/2\ZZ)^2$-compression. It follows that $\Delta_{\eta}(X) = 1$. We compute this Brauer class explicitly. For simplicity, write the function field of $X$ in the form 
$$
L' = k(x,y)/(y^2 - x f((x^m + x^{-m})/2)).
$$ 
It is not hard to see that 
\begin{eqnarray*}
L'^{\langle \sigma^m \rangle} &=& k(x),\\ 
L'^{\langle \tau \rangle} &=& k((x + x^{-1})/2, y(1+x^{-1}))/(y^2 - x f((x^m + x^{-m})/2)),\\ 
L'^{\langle \sigma^m, \tau \rangle} &=& k((x + x^{-1})/2).
\end{eqnarray*}
Let $s = (x + x^{-1})/2$. Note that
$$
y^2(1+x^{-1})^2 = x f((x^m + x^{-m})/2) (1 + 2 x^{-1} + x^{-2}) = (2s+2) f(T_m(s)),
$$ 
whence $L'^{\langle \tau \rangle}$ is obtained from $k(s)$ by adjoining $\sqrt{(2s+2) f(T_m(s))}$. On the other hand, note that $L'^{\langle \sigma^m \rangle}$ is obtained from $k(s)$ by adjoining $\sqrt{s^2-1}$. Regarding $X$ as a $(\ZZ/2\ZZ)^2$-variety, its class $[X] \in H^1(k(s),(\ZZ/2\ZZ)^2)$ is thus equal to $(\ol{2(s+1) f(T_m(s))},\ol{s^2-1})$. By Lemma \ref{Lemma_Cohom_Kleingroup}, we easily conclude that $\Delta_\eta(X) = [(2a(s+1) f(T_m(s)), s^2-1)_2]$. (In the notation of Lemma \ref{Lemma_Cohom_Kleingroup}, note that $\fourIdx{}{\eta}{}{}{\PP^1} \cong \fourIdx{}{(a,1)}{}{}{\PP^1}$ as a $(\ZZ/2\ZZ)^2$-variety.) We may assume that $(s+1) f(T_m(s))$ is separable, by replacing $\xi$ by another element in $\xi\cdot k^{\times 2}$ if necessary. Applying Lemma \ref{Lemma_Square_Quaternions} to $\gamma$, we obtain that $\xi$ is a square in $k(\gamma)$. It follows that $k(\gamma) = k(\sqrt{\xi})$, since $[k(\sqrt{\xi}):k(\xi)] = 2$ by assumption. Hence, we can write $\gamma = l_1 + l_2 \sqrt{\xi}$ for some $l_1, l_2 \in k(\xi)$. Squaring, we obtain that $\xi + 1 = l_1^2 + l_2^2 \xi + 2 l_1 l_2 \sqrt{\xi}$, whence $l_1 l_2 = 0$. If $l_2 = 0$, it follows that $\xi + 1$ is a square in $k(\xi)$, contradicting the fact that $[k(\gamma):k(\xi)] = 2$. Therefore, we must have $l_1 = 0$, which implies that $1 + \xi^{-1}$ is a square in $k(\xi)$, i.e., $\delta \in k(\xi)$. However, applying Lemma \ref{Lemma_Square_Quaternions} to $\delta$ implies that $\xi^{-1}$ (and hence $\xi$) is a square in $k(\delta) = k(\xi)$, which contradicts our initial assumption. This proves that a $G$-compression $X \to \fourIdx{}{\eta}{}{}{\PP^1}$ is not possible. Since $X$ has $k$-rational points, we conclude that $X$ cannot be $G$-compressed to any curve of genus $0$.

It remains to prove that $X$ cannot be $G$-compressed to any curve of genus $1$. Suppose there is such a $G$-compression $X \to E$. By construction, the hyperelliptic involution of $X$, namely $\sigma^m$, fixes some $k$-rational point of $X$. Hence, $\sigma^m$ must fix some $k$-rational point of $E$, which we may assume to be an elliptic curve. We adopt the notation of Lemma \ref{Lemma_Facts_Elliptic}(a), where $\pi\colon \Aut(E) \to \Aut_0(E)$ denotes the natural projection. Since $\Aut_0(E)$ is abelian, the relation $(\sigma \tau)^2 = 1$ implies that $\pi(\sigma \tau)^2 = \pi(\sigma^2) \pi(\tau^2) = \pi(\sigma^2) = 1$. It follows that $\sigma^2$ acts as a translation on $E$. We claim that $\sigma$ acts as a translation as well. By Lemma \ref{Lemma_Facts_Elliptic}(a), we may write $\sigma = \tau_{P_0} \circ \alpha$, where $\tau_{P_0}$ denotes the translation by $P_0 \in E$ and $\alpha \in \Aut_0(E)$. Since $\sigma^2$ is a translation, it follows that $\sigma^2(P) - P = \alpha^2(P) - P + \alpha(P_0) + P_0$ must be constant for all $P \in E$. This implies that the isogeny $\alpha^2 - \mathrm{id}$ is constant, so it is the zero map. This proves that $\alpha$ has order $2$ in $\Aut_0(E)$, whence $\alpha$ is the inversion map $P \mapsto -P$. This implies that $\sigma^2 = \mathrm{id}$ in $\Aut(E)$, which is a contradiction because $\sigma$ has order $n \geq 4$. We have proved that $\sigma$ acts as a translation on $E$, and therefore so does $\sigma^m$. Hence $\sigma^m$ cannot fix any point of $E$.
\end{proof}

\section{Polyhedral groups} \label{Sect_Polyhedral}

It remains to study the incompressibility of curves endowed with a faithful action of a polyhedral group $G$, i.e., $G = A_4$, $S_4$, or $A_5$.

\subsection{Serre's cohomological invariant} 

Let $\widehat{G}$ be the binary polyhedral group associated to $G$. If $G$ is an alternating group, then $\widehat{G}$ coincides with the unique nontrivial central extension of $G$ by $\ZZ/2\ZZ$. If $G = S_4$, then $\widehat{G}$ is the unique central extension of $G$ by $\ZZ/2\ZZ$ in which transpositions and products of disjoints transpositions lift to elements of order $4$. (This is not the double cover studied in \cite{Se84}, in which transpositions lift to involutions). We have a central exact sequence
$$
\xymatrix{
1 \ar[r] & \ZZ/2\ZZ \ar[r] & \widehat{G} \ar[r] & G \ar[r]  & 1
},
$$
which yields a corresponding sequence in cohomology
$$
\xymatrix{H^1(K,\widehat{G}) \ar[r]  & H^1(K,G) \ar[r]^-{\widehat{\Delta}} & \Br_2(K),
}
$$
for any field extension $K/k$. Note that $\widehat{\Delta} \colon H^1(K,G) \to H^2(K,\ZZ/2\ZZ) = \Br_2(K)$ defines a cohomological invariant. If $X$ is a faithful primitive $G$-variety and $L = k(X)^G$, we denote the Brauer class associated to $[X] \in H^1(L,G)$ by $\widehat{\Delta}(X)$. Note that $\widehat{\Delta}(X)$ is trivial if and only if $[X]$ lifts to a $\widehat{G}$-torsor $[\widehat{X}] \in H^1(L,\widehat{G})$. The following result follows from the definition of cohomological invariant.

\begin{prop} \label{Prop_Invariant_Polyhedral}
Let $X, Y$ be faithful primitive $G$-varieties and suppose that there exists a $G$-compression $f\colon X \dashrightarrow Y$. Let $i\colon k(Y)^G \hookrightarrow k(X)^G$ be the natural inclusion induced by $f$ and define $i_*\colon \Br_2(k(Y)^G) \to \Br_2(k(X)^G)$ as the corresponding functorial map. Then $i_*(\widehat{\Delta}(Y)) = \widehat{\Delta}(X)$.
\end{prop}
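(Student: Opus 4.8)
The plan is to deduce the identity directly from the naturality of the connecting map $\widehat{\Delta}$ together with the standard compatibility of the classes $[X]$ and $[Y]$ under the induced field inclusion. Write $K = k(Y)^G$ and $L = k(X)^G$, so that $f$ induces the $k$-embedding $i\colon K \hookrightarrow L$. Since $\widehat{\Delta}$ arises as the connecting map in the cohomology sequence of the central extension $1 \to \ZZ/2\ZZ \to \widehat{G} \to G \to 1$, it is a cohomological invariant; concretely, this means that for the field homomorphism $i$ the square
\[
\xymatrix{
H^1(K,G) \ar[r]^-{\widehat{\Delta}} \ar[d]_-{i_*} & \Br_2(K) \ar[d]^-{i_*} \\
H^1(L,G) \ar[r]^-{\widehat{\Delta}} & \Br_2(L)
}
\]
commutes, where the vertical arrows are the usual base-change homomorphisms in Galois cohomology. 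This commutativity is exactly the formal content I would invoke; it is the same mechanism already used in Case 1 of the proof of Proposition \ref{Prop_Maps_PV}(a) for the invariant $\Delta_\rho$.

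The only remaining input is the assertion $i_*([Y]) = [X]$ in $H^1(L,G)$. I would verify it as follows. The $G$-compression $f$ corresponds to a $G$-equivariant inclusion of function fields $k(Y) \hookrightarrow k(X)$, and both $k(X)/L$ and $k(Y)/K$ are $G$-Galois because the $G$-actions are faithful. Viewing everything inside $k(X)$, every element of $L = k(X)^G$ is $G$-fixed, so $k(Y) \cap L \subseteq k(Y)^G = K$, whence $k(Y) \cap L = K$. Therefore the compositum $k(Y)\cdot L$ is $G$-Galois over $L$ with $\Gal(k(Y)\cdot L/L) \cong \Gal(k(Y)/K) = G$, and by construction it represents $i_*([Y])$. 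Comparing the compatible $G$-actions shows that the restriction $\Gal(k(X)/L) \to \Gal(k(Y)\cdot L/L)$ is an isomorphism, forcing $k(X) = k(Y)\cdot L$ and hence $[X] = i_*([Y])$.

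Combining the two ingredients then yields
\[
\widehat{\Delta}(X) = \widehat{\Delta}^L([X]) = \widehat{\Delta}^L(i_*[Y]) = i_*\bigl(\widehat{\Delta}^K[Y]\bigr) = i_*\bigl(\widehat{\Delta}(Y)\bigr),
\]
as required. The argument is essentially formal, so I do not anticipate a serious obstacle: the single point needing care is the identification $i_*([Y]) = [X]$, and even there the substance reduces to the elementary compositum computation above, which is why the statement is recorded as an immediate consequence of the definition of a cohomological invariant.
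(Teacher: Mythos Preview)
Your proposal is correct and is exactly the argument the paper has in mind: the paper records this proposition as an immediate consequence of $\widehat{\Delta}$ being a cohomological invariant and explicitly leaves the proof to the reader, having already invoked the identity $i_*([Y]) = [X]$ as ``well known'' in Case~1 of Proposition~\ref{Prop_Maps_PV}(a). You have simply filled in those details, and the compositum computation you give for $i_*([Y]) = [X]$ is fine.
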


\begin{proof}
Left to the reader.
\end{proof}

J.-P. Serre has described an effective way to compute $\widehat{\Delta}$. An element of $H^1(K,G)$ can be viewed as (the isomorphism class of) an {\' e}tale $K$-algebra $E$, which has trivial discriminant if $G$ is alternating. Then we have the following result.

\begin{prop} [{cf. \cite[Th. 1]{Se84}}] \label{Prop_Computation_Invar_Polyhedral}
Let $q_{E}$ is the trace form of $E/K$. Then
$$
\widehat{\Delta}(E) = w_2(q_E) + [(-2, d_E)_2],
$$
where $w_2$ denotes the second Stiefel-Whitney class and $d_E$ is the discriminant of $E$.
\end{prop}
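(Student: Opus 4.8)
This statement is Serre's computation of the Hasse--Witt invariant of the trace form, and the plan is to deduce it from \cite[Th.~1]{Se84} after matching the relevant double-cover conventions. First I would exhibit $q_E$ as a twisted form. An element of $H^1(K,G)$ corresponds, via the natural permutation representation $G \hookrightarrow S_n$ (with $n=4$ for $A_4, S_4$ and $n=5$ for $A_5$), to an \'etale $K$-algebra $E$ of rank $n$; in the alternating cases $[E]$ lifts to $H^1(K,A_n)$, so $d_E$ is a square. Equipping $K^n$ with the split form $\langle 1,\dots,1\rangle$, the permutation action realizes $S_n$ as a subgroup of $\mathrm{O}_n$ preserving this form, and the twist of $\langle 1,\dots,1\rangle$ by $[E]$ is precisely $q_E$. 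Functoriality of Stiefel--Whitney classes then computes $w_i(q_E)$ as the image of $[E]$ under $H^1(K,S_n) \to H^1(K,\mathrm{O}_n) \xrightarrow{w_i} H^i(K,\ZZ/2\ZZ)$; in particular $w_1(q_E) = d_E$, recovering the classical identity $\disc(q_E) = d_E$.

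Next I would identify $\widehat{\Delta}$ with a spinorial obstruction attached to this orthogonal representation. Under the permutation embedding, a transposition maps to a reflection of $(K^n, \langle 1,\dots,1\rangle)$, and the defining property of $\widehat{G}$ for $G = S_4$ (transpositions lift to elements of order $4$) matches the cover $\mathrm{Pin}^-_n \to \mathrm{O}_n$, in which reflections square to $-1$; for $G = A_4, A_5$ the group $\widehat{G}$ is the pullback of $\mathrm{Spin}_n \to \mathrm{SO}_n$. Hence $\widehat{\Delta}(E)$ is the obstruction to lifting the image of $[E]$ in $H^1(K,\mathrm{O}_n)$ along the appropriate $\mathrm{Pin}$- or $\mathrm{Spin}$-cover, and I would feed this into Serre's formula.

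Finally I would assemble the identity. Serre's theorem gives $w_2(q_E) = \mathrm{sp}(E) + [(2,d_E)_2]$, where $\mathrm{sp}(E)$ denotes the obstruction for the double cover in which transpositions lift to involutions. Passing from that cover to the order-$4$ cover $\widehat{G}$ changes the obstruction by $w_1(q_E)^2 = [(d_E, d_E)_2] = [(-1, d_E)_2]$, so that
$$
\widehat{\Delta}(E) = \mathrm{sp}(E) + [(-1,d_E)_2] = w_2(q_E) + [(2,d_E)_2] + [(-1,d_E)_2] = w_2(q_E) + [(-2,d_E)_2],
$$
using bilinearity of the quaternion symbol in its first argument. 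In the alternating cases $d_E$ is a square, the correction term vanishes, and the formula reduces to $\widehat{\Delta}(E) = w_2(q_E)$, as it must.

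The step I expect to be the main obstacle is the precise bookkeeping underlying the last paragraph: checking that the group $\widehat{G}$ defined above really is the pullback of the indicated $\mathrm{Pin}$- or $\mathrm{Spin}$-cover, and tracking the exact quaternion symbols that both distinguish the two double covers and account for the discrepancy between the permutation form and the standard spin normalization. This symbol-and-sign analysis is the technical heart of \cite[Th.~1]{Se84}, and everything above is a reduction to it.
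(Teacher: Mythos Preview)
Your proposal is correct and is in fact more detailed than the paper's own treatment: the paper's proof of this proposition consists entirely of the citation ``See \cite[Th.~1]{Se84} or \cite[\S 2]{Vi88}.'' What you have written is precisely the unpacking of those references---identifying $\widehat{G}$ with the pullback of the appropriate $\mathrm{Pin}^-$/$\mathrm{Spin}$ cover under the permutation representation, applying Serre's formula for the other double cover $\tilde{S}_n$ (transpositions lifting to involutions), and then correcting by $[(d_E,d_E)_2] = [(-1,d_E)_2]$ to pass between the two covers. The sign bookkeeping you flag as the main obstacle is indeed the content of \cite{Se84} and \cite{Vi88}, and your reduction to it is sound.
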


\begin{proof}
See \cite[Th. 1]{Se84} or \cite[\S 2]{Vi88}.
\end{proof}

\begin{remark} \label{Remark_Equivalence_Invariants}
If the field $k$ satisfies some additional conditions, we may view $\widehat{\Delta}$ as a particular case of the cohomological invariant defined in Section \ref{Sect_Equiv_ProjSpaces}. Suppose that the following assumptions hold.

\begin{enumerate} [\upshape(i)]
\item There exists an embedding $\rho\colon G \hookrightarrow \PGL_2$. This is the case if and only if $-1$ is the sum of two squares over $k$, with the additional requirement that $\sqrt{5} \in k$ if $G = A_5$ (see \cite{Beau10}).
\item There exists an embedding $\ol{\rho}\colon \widehat{G} \hookrightarrow \GL_2$ that fits in a commutative diagram
$$
\xymatrix{1 \ar[r] & \GG_m \ar[r] & \GL_2 \ar[r]  & \PGL_2 \ar[r] & 1\\
1 \ar[r] & \ZZ/2\ZZ \ar[r] \ar@{^{ (}->}[u]  & \widehat{G} \ar@{^{ (}->}[u]_-{\ol{\rho}} \ar[r] & G \ar@{^{ (}->}[u]_-{\rho} \ar[r] & 1}
$$
This is automatic if $G$ is alternating. In the case $G = S_4$, it is true if and only if $\sqrt{2} \in k$. 
\end{enumerate}

Passing to cohomology in the above diagram, we conclude that $\widehat{\Delta}$ coincides with $\Delta_{\rho}$, if we regard both their images to lie in the Brauer group. 
\end{remark}

\subsection{Computation of the invariant for curves of genus $\leq$ 1}

We first compute the cohomological invariant $\widehat{\Delta}$ for polyhedral actions on curves of genus $0$. Recall that, up to equivariant birational isomorphism, there is only one action of a polyhedral group $G$ on a curve of genus $0$ (see \cite{Beau10, Ga13}). In what follows, let $q_0(x,y,z) = x^2+y^2+z^2$ and denote by $X_0 \subset \PP^2$ the corresponding quadric. Then, $G$ acts on $X_0$ via the standard embedding $\rho\colon G \hookrightarrow \SO(q_0)$ as a rotation group. If $G = A_4$ or $S_4$, the action is defined over any field $k$, while for $G = A_5$ the action is defined over $k$ if and only if $\sqrt{5} \in k$. Recall also that $K:=k(X_0)^G$ is isomorphic to a rational function field, i.e., $X_0/G \cong \PP^1$.

\begin{prop} \label{Prop_Invariant_Polyhedral_0}
\begin{enumerate}[\upshape(a)]
\item If $G$ is alternating, then $\widehat{\Delta}(X_0) = [(-1,-1)_2]$ in $\Br_2(K)$.
\item If $G = S_4$, then $\widehat{\Delta}(X_0) = [(-1,-1)_2] + [(2,t)_2]$ in $\Br_2(K)$, where $t$ is some generator of $K/k$.
\end{enumerate}
\end{prop}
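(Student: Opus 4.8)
The plan is to evaluate Serre's formula (Proposition \ref{Prop_Computation_Invar_Polyhedral}) on the \'etale algebra $E/K$ attached to $\gamma := [X_0] \in H^1(K,G)$. Here $E = k(X_0)^H$, where $H \le G$ is the stabilizer of a vertex (for $A_4$), a diagonal (for $S_4$), or an inscribed tetrahedron (for $A_5$) under the natural permutation action $G \hookrightarrow S_n$, so that $E$ has degree $n = 4, 4, 5$ respectively. Then $\widehat{\Delta}(X_0) = w_2(q_E) + [(-2,d_E)_2]$, and the whole problem reduces to computing the trace form $q_E$ and the discriminant $d_E$. The basic observation is that $q_E$ is the twist by $\gamma$ of the split trace form $\langle 1,\dots,1\rangle$ of $K^n$ along $S_n \hookrightarrow O_n$; decomposing the permutation module as $\mathbf{1} \oplus W$ with $W$ its standard summand, this reads $q_E \cong \langle 1 \rangle \perp {}^{\gamma}q_W$, where $q_W$ is the $G$-invariant form on $W$.

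The crucial auxiliary computation is the twist ${}^{\gamma}q_0$ of the rotation form. Because the generic point of $X_0$ provides a tautological $K$-point of the twisted conic ${}^{\gamma}X_0$, the ternary form ${}^{\gamma}q_0$ is isotropic; since $G \subset \SO(q_0)$ acts by isometries of determinant $1$, twisting preserves the determinant, so ${}^{\gamma}q_0$ has trivial determinant and hence ${}^{\gamma}q_0 \cong \langle 1, -1, -1 \rangle$. For $G = A_4$ the standard summand $W$ is the unique three-dimensional irreducible representation, so it is $G$-isomorphic to the rotation representation; by Schur's lemma $q_W \cong \langle \lambda \rangle q_0$ for some $\lambda \in k^{\times}$, whence ${}^{\gamma}q_W \cong \langle \lambda \rangle {}^{\gamma}q_0 \cong \langle \lambda, -\lambda, -\lambda\rangle$ and $q_E \cong \langle 1, \lambda, -\lambda, -\lambda\rangle$. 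Since $\det q_E \equiv \lambda$ and the discriminant of an $A_n$-torsor is a square, we get $\lambda \in k^{\times 2}$; thus $q_E \cong \langle 1,1,-1,-1\rangle$, a short Stiefel--Whitney computation gives $w_2(q_E) = [(-1,-1)_2]$, and the term $[(-2,d_E)_2]$ vanishes. This proves part (a) for $A_4$.

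For $G = S_4$ the rotation representation lies in $\SO_3$ and so has trivial determinant character, while the standard summand $W$ has determinant equal to the sign character; hence $W$ is $G$-isomorphic to the rotation representation twisted by $\varepsilon$. Twisting then scales the form by the discriminant class $\delta := \varepsilon_{*}\gamma = d_E \in K^{\times}/K^{\times 2}$, giving ${}^{\gamma}q_W \cong \langle \lambda\delta\rangle {}^{\gamma}q_0 \cong \langle \lambda\delta, -\lambda\delta, -\lambda\delta\rangle$ and $q_E \cong \langle 1, \lambda\delta, -\lambda\delta, -\lambda\delta\rangle$; comparing $\det q_E \equiv \lambda\delta$ with $d_E = \delta$ forces $\lambda \in k^{\times 2}$, so $q_E \cong \langle 1, \delta, -\delta, -\delta\rangle$. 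Expanding the total Stiefel--Whitney class yields $w_2(q_E) = [(-\delta,-\delta)_2]$, so $\widehat{\Delta}(X_0) = [(-\delta,-\delta)_2] + [(-2,\delta)_2]$; bimultiplicativity, the symmetry of the symbol, and the identity $(a,a)_2 \cong (a,-1)_2$ collapse this to $[(-1,-1)_2] + [(2,\delta)_2]$. It remains to recognize $\delta$ as a generator of $K/k$: by Riemann--Hurwitz the double cover $X_0/A_4 \to X_0/S_4 \cong \PP^1$ is ramified over exactly two points, namely the branch points of ramification orders $2$ and $4$ (the order-$3$ one stays unramified, its stabilizer lying in $A_4$), and these are $k$-rational; placing them at $0$ and $\infty$ gives $\delta \equiv t$ for a coordinate $t$, so $[(2,\delta)_2] = [(2,t)_2]$ and part (b) follows.

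Finally, $A_5$ is deduced from $A_4$ by transfer rather than a direct degree-five computation. Restriction along $A_4 \hookrightarrow A_5$ identifies $\gamma$ over $K_4 := k(X_0)^{A_4}$ with the pushforward of the $A_4$-torsor, and since the binary cover $\widehat{A_4}$ is the pullback of $\widehat{A_5}$, functoriality of $\widehat{\Delta}$ (Proposition \ref{Prop_Invariant_Polyhedral}) gives $\widehat{\Delta}_{A_5}(X_0)|_{K_4} = \widehat{\Delta}_{A_4}(X_0) = [(-1,-1)_2]$. As $[K_4 : K] = 5$ is odd, the composite $\mathrm{cor}\circ\mathrm{res}$ is multiplication by $5$, which is the identity on the $2$-torsion group $\Br_2(K)$, so restriction $\Br_2(K) \hookrightarrow \Br_2(K_4)$ is injective and $\widehat{\Delta}_{A_5}(X_0) = [(-1,-1)_2]$. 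I expect the main obstacle to be the second step: correctly tracking the Schur scalar $\lambda$ and, for $S_4$, the sign-twist that produces $\delta$, together with the geometric identification of $\delta$ with a coordinate on $X_0/S_4$; once these are in place, the transfer argument disposes of the otherwise cumbersome $A_5$ case cleanly.
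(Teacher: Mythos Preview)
Your argument is correct and takes a genuinely different route from the paper's. The paper proves part (b) by a brute-force computation: it writes down explicit generators for an $S_3$-fixed field, finds the degree-four minimal polynomial over $K$, diagonalizes the resulting trace form, and simplifies the Hilbert symbols by hand. For part (a) the paper argues indirectly: writing $\widehat{\Delta}(X_0) = [(u,v)_2] + [(-1,-1)_2]$, it shows $(u,v)_2$ splits over the function field of $\langle 1,1,1\rangle$, invokes the Cassels--Pfister subform theorem to conclude that either $(u,v)_2$ is split or $(u,v)_2 \cong (-1,-1)_2$, and then rules out the second option by a separate contradiction involving the genus parity of hyperelliptic curves whose automorphism group contains $\SL_2(\mathbb{F}_3)$ or $\SL_2(\mathbb{F}_5)$.

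Your approach replaces all of this with a single representation-theoretic identification. The key step---that the twisted conic ${}^{\gamma}X_0$ acquires a tautological $K$-point, forcing ${}^{\gamma}q_0 \cong \langle 1,-1,-1\rangle$---does in one stroke what the paper achieves through Cassels--Pfister plus the hyperelliptic argument, and it works uniformly for all three groups. The $S_4$ case then falls out of the same framework once you track the sign twist, and your geometric identification of $\delta$ (via the two $k$-rational branch points of $X_0/A_4 \to X_0/S_4$, distinguished by their ramification indices) is cleaner than the paper's explicit coordinate computation. Finally, handling $A_5$ by restriction--corestriction along the odd-index inclusion $A_4 \subset A_5$ avoids any five-dimensional trace-form calculation entirely; the paper instead treats $A_4$ and $A_5$ in parallel throughout. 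One small remark: your appeal to Proposition~\ref{Prop_Invariant_Polyhedral} in the $A_5$ step is not quite the right citation, since that proposition concerns $G$-compressions rather than restriction to a subgroup; what you are really using is the elementary compatibility of connecting maps with the pullback square $\widehat{A_4} = A_4 \times_{A_5} \widehat{A_5}$, together with the fact that $\mathrm{res}_{K_4/K}[X_0]_{A_5}$ is the pushforward of $[X_0]_{A_4}$ along $A_4 \hookrightarrow A_5$.
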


\begin{proof}
(a) Let $k'/k$ be a field extension, and suppose that $q_0$ is isotropic over $k'$. We claim that $\widehat{\Delta}(X_0')$ is trivial in $\Br_2(k'(X_0')^G)$, where $X_0' = X_0 \times_{\Spec(k)} \Spec(k')$. Indeed, note that $\PGL_2 \cong \SO(q_0)$ over $k'$, whence there exists an embedding $\rho\colon G \hookrightarrow \PGL_2$ defined over $k'$ and a $G$-equivariant isomorphism $X_0' \cong \fourIdx{}{\rho}{}{}{\PP^1}$. It follows from Remark \ref{Remark_Equivalence_Invariants} that $\widehat{\Delta}(X_0') = \Delta_\rho(X_0') = \Delta_\rho(\fourIdx{}{\rho}{}{}{\PP^1})$, which is trivial by Lemma \ref{Lemma_Class_PV}. This completes the proof of the claim.

Let $E$ be the \' etale algebra corresponding to the class $[X_0] \in H^1(K,G)$. Then $q_E \cong \langle 1, a, b, ab \rangle$ for some $a,b \in K$ if $G = A_4$ (resp. $q_E \cong \langle 1, a, b, c, abc \rangle$ for some $a,b,c \in K$ if $G = A_5$). It follows that $\widehat{\Delta}(X_0) = w_2(q_E) = [(-a,-b)_2]+[(-1,-1)_2]$ if $G = A_4$ (resp. $[(-ac,-bc)_2]+[(-1,-1)_2]$ if $G = A_5$). In any case, we can write $\widehat{\Delta}(X_0) = [(u,v)_2]+[(-1,-1)_2]$ for some $u,v \in K$, so it suffices to prove that $(u,v)_2$ is split over $K$. Since $q_0$ is isotropic over $k':=k(s,t)/(s^2+t^2+1)$ and $(-1,-1)_2$ splits over $k'$, it follows from the previous paragraph that $(u,v)_2$ splits over $k'(X_0')^G \cong K(s,t)/(s^2+t^2+1)$. Equivalently, the Pfister form $\langle 1, -u, -v, uv \rangle$ is hyperbolic over $K(s,t)/(s^2+t^2+1)$, which is the function field of the quadratic form $\langle 1,1,1 \rangle$ defined over $K$. By \cite[Th. X.4.5]{Lam05}, either $\langle 1, -u, -v, uv \rangle$ is isotropic (hence hyperbolic) over $K$, or $\langle 1, -u, -v, uv \rangle \cong \langle 1, 1, 1, 1 \rangle$ over $K$. Equivalently, either $(u,v)_2$ splits, or $(u,v)_2 \cong (-1,-1)_2$. The former case yields the desired result, while the latter implies that $\widehat{\Delta}(X_0)$ is trivial. Hence, it suffices to prove that $\widehat{\Delta}(X_0)$ is nontrivial whenever $(-1,-1)_2$ is not split over $K$ (equivalently over $k$, since $K$ is purely transcendental over $k$). 

Assume for the sake of contradiction that $(-1,-1)_2$ is not split over $k$ and $\widehat{\Delta}(X_0)$ is trivial. This implies that $[X_0]$ comes from a class in $H^1(K,\widehat{G})$, i.e., there exists a faithful primitive $\widehat{G}$-variety $\widehat{X_0}$ such that $\widehat{X_0}/(\ZZ/2\ZZ)$ is birationally isomorphic to $X_0$ as a $G$-variety. Note that $\widehat{X_0}$ must be geometrically irreducible since $1 \to \ZZ/2\ZZ \to \widehat{G} \to G \to 1$ is not split. Thus, we may assume without loss of generality that $\widehat{X_0}$ is a (smooth projective) $\widehat{G}$-curve, endowed with a $2$-$1$ quotient morphism $\widehat{X_0} \to X_0$. It follows that $\widehat{X_0}$ is a hyperelliptic curve (in the sense that its canonical divisor is not very ample). Moreover, note that $\Aut(\widehat{X_0})(\ol{k})$ contains $\widehat{G}$, which equals $\SL_2(\mathbb{F}_3)$ if $G = A_4$ (resp. $\SL_2(\mathbb{F}_5)$ if $G = A_5$). By \cite[Table 1]{Sh03}, it follows that the genus of $\widehat{X_0}$ is even. However, it is well known that this implies that $\widehat{X_0}/(\ZZ/2\ZZ) = X_0$ has a $k$-rational point (see, e.g., \cite[\S 2.1]{Me91}), which is equivalent to the splitting of $(-1,-1)_2$ over the field $k$. This contradiction concludes the proof.

(b) Note that $S_4$ embeds into $\SO(q_0)$ as the matrices of the form $DP$, where $D$ is diagonal with entries $\pm 1$ and $P$ is a permutation matrix. (There are $24$ such matrices of determinant $1$.) The \'etale $K$-algebra corresponding to $[X_0] \in H^1(K,S_4)$ is the field extension $k(X_0)^{H}/K$, where $H$ is any copy of $S_3$ inside $S_4$. For convenience, we choose the subgroup $H$ generated by
$$
\sigma = \left(\begin{array}{ccc}
0 & 1 & 0\\
0 & 0 & 1\\
1 & 0 & 0
\end{array}\right), \quad
\tau = \left(\begin{array}{ccc}
0 & 0 & -1\\
0 & -1 & 0\\
-1 & 0 & 0
\end{array}\right).
$$
Note that $S_4 = V \rtimes H$, where $V$ is the subgroup of diagonal matrices inside $S_4$. 

We write $k(X_0) = k(a,b)/(a^2+b^2+1)$, where $a = x/z$ and $b = y/z$ in the usual coordinates of $X_0$. Note that $\sigma(a) = b/a$ and $\sigma(b) = 1/a$, while $\tau(a)=1/a$ and $\tau(b) = b/a$. An easy computation then shows that $k(X_0)^H = k(\alpha)$, where $\alpha = a + b/a + 1/b + 1/a + b + a/b$. By Galois Theory, the minimal polynomial of $\alpha$ over $K$ is equal to 
$$
P(Y) = \prod_{g \in V} (Y - g(\alpha)) = Y^4 - 6 Y^2 + 8 Y + t + 24,
$$
where
$$
t = \frac{(a-1)^2 (a+1)^2 (2 a^2+1)^2 (a^2+2)^2}{a^4 (a^2+1)^2}
$$
is a generator of $K/k$, which proves that $k(X_0)^H = K[Y]/(p(Y))$. By a simple computation, the trace form of $K[Y]/(p(Y))$ over $K$ is isomorphic to $\langle 1, 3, -(t+27), -3t(t+27) \rangle$. It follows that its Stiefel-Whitney class is equal to $[(-3t,t(t+27))_2] + [(-1,-t)_2]$. The first quaternion algebra is split over $K$ because $(-3t) 3^2 + t(t+27) = t^2$. It follows that $\widehat{\Delta}(X_0) = [(-1,-t)_2] + [(-2,t)_2] = [(-1,-1)_2] + [(2,t)_2]$. The proof is complete.
\end{proof}

We now focus our attention on polyhedral actions on curves of genus $1$. In this case, we only need to consider $A_4$-actions, since $S_4$ and $A_5$ cannot act faithfully on curves of genus $1$.

\begin{prop} \label{Prop_Genus_1_Alternating}
Let $C$ be a curve of genus $1$ endowed with a faithful $A_4$-action defined over a field $k$. Then the following properties hold.
\begin{enumerate} [\upshape(a)]
\item The Jacobian $E \cong \mathrm{Pic}^0(C)$ has $j$-invariant $0$.
\item The elliptic curve $E$ can be endowed with a faithful $A_4$-action defined over $k$. 
\item The curve $C$ is $A_4$-equivariantly isomorphic to $E$ over some extension $k'/k$ of odd degree.
\item We have the equality $\widehat{\Delta}(C) = [(-1,-1)_2]$ in 
$\Br_2(k(C)^{A_4})$.
\end{enumerate}
\end{prop}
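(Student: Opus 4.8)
The plan is to analyze the faithful $A_4$-action through the exact sequence of Lemma \ref{Lemma_Facts_Elliptic}(a) applied to $E = \mathrm{Pic}^0(C)$, and to bootstrap the four statements in order. For part (a) I would pass to $\ol{k}$, where $C_{\ol{k}} \cong E_{\ol{k}}$ and $\Aut(C_{\ol{k}}) = \Aut(E_{\ol{k}})$ sits in \eqref{Eq_Exact_Seq_Elliptic}. Composing $A_4 \hookrightarrow \Aut(E_{\ol{k}})$ with $\pi$ lands in the abelian group $\Aut_0(E_{\ol{k}}) \cong \mu_n$, so it factors through $A_4^{\mathrm{ab}} \cong \ZZ/3\ZZ$; its image is thus trivial or $\ZZ/3\ZZ$. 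It cannot be trivial, as otherwise $A_4$ would embed into the abelian translation group $E_{\ol{k}}$. Hence the image is $\ZZ/3\ZZ$, forcing $3 \mid n$, so $n = 6$ and $j(E) = 0$ by Lemma \ref{Lemma_Facts_Elliptic}(b). The same computation identifies the kernel $A_4 \cap i(E_{\ol{k}})$ with the subgroup $V \cong (\ZZ/2\ZZ)^2$ of $A_4$, acting by translation by $E[2]$.

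For part (b) the two key points are that $\omega_3 \in k$ and $E[2] \subset E(k)$. The functorial action of $A_4$ on $\mathrm{Pic}^0(C) = E$ is defined over $k$ and, by part (a), has image the order-$3$ subgroup of $\Aut_0(E)$; thus an order-$3$ automorphism $\alpha$ is defined over $k$, which gives $\omega_3 \in k$ by Lemma \ref{Lemma_Facts_Elliptic}(c). For the $2$-torsion, each $v \in V$ acts on $C$ over $k$ as a translation $\tau_{T_v}$ with $T_v \in E[2]$ (an automorphism inducing the identity on $\mathrm{Pic}^0$ is a translation); since $v$ is a $k$-automorphism and $V$ is a constant group, $v \mapsto T_v$ is Galois-equivariant with trivial Galois action on $V$, so each $T_v$ is Galois-fixed and $E[2] \subset E(k)$. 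I then define the standard action $A_4 = E[2] \rtimes \langle\alpha\rangle$ on $E$ over $k$, noting that $\alpha$ permutes $E[2]\setminus\{O\}$ cyclically (the only order-$3$ element of $\GL_2(\mathbb{F}_2)$), so the action is faithful.

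For part (c) I would use the fixed points of an order-$3$ element $g \in A_4$. Over $\ol{k}$ it has exactly $\deg(\alpha - \mathrm{id}) = 3$ fixed points on $C$, forming a Galois-stable set (as $g$ is defined over $k$), hence a degree-$3$ \'etale $k$-algebra. Such an algebra has a factor of odd degree ($1$ or $3$), yielding an odd-degree extension $k'/k$ with a point $P_0 \in C(k')$ fixed by $g$. Taking $P_0$ as origin, the map $Q \mapsto [Q - P_0]$ is an isomorphism $C_{k'} \xrightarrow{\sim} E_{k'}$ carrying the action of $g$ to the functorial automorphism $\alpha$ and each $v \in V$ to $\tau_{T_v}$; choosing the standard action on $E$ with $g \mapsto \alpha$ (the semidirect relation $T_{gvg^{-1}} = \alpha(T_v)$ holds on both sides) makes this isomorphism $A_4$-equivariant.

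For part (d) I would first reduce to $E$. Writing $L = k(C)^{A_4}$, the restriction $\res\colon \Br_2(L) \to \Br_2(L')$ along the odd-degree extension $L' = L\cdot k'$ is injective, since $\mathrm{cores}\circ\res$ is multiplication by the odd integer $[L':L]$, invertible on $2$-torsion; and by part (c), $\widehat{\Delta}(C)$ restricts to $\widehat{\Delta}(E_{k'})$. Thus it suffices to show $\widehat{\Delta}(E) = [(-1,-1)_2]$ over $k$. By Proposition \ref{Prop_Computation_Invar_Polyhedral}, $\widehat{\Delta}(E) = w_2(q_F)$, since the discriminant of the quartic \'etale algebra $F = k(E)^{\langle\alpha\rangle}$ is trivial for the alternating group; I would compute this trace form explicitly from a Weierstrass model $y^2 = x^3 + d$ with $\alpha(x,y) = (\omega_3 x, y)$ and extract $w_2$. \textbf{The main obstacle} is precisely this last computation: unlike the genus-$0$ case of Proposition \ref{Prop_Invariant_Polyhedral_0}(a), the curve $E$ is not rational, so the Cassels--Pfister shortcut available there does not apply, and the trace form of the quartic resolvent must be evaluated directly to confirm that $w_2(q_F) = [(-1,-1)_2]$.
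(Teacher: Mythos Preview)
Your proposal is correct, and parts (a), (b), and (d) follow essentially the same route as the paper: the structure of $\Aut(E)$ forces $j(E)=0$ and $V\subset E[2](k)$, $\omega_3\in k$; the reduction from $C$ to $E$ in (d) goes via injectivity of restriction to an odd-degree extension; and the final step is the explicit trace-form computation you flag as the obstacle. The paper carries this out by showing $k(E)^{A_4}=k(t)$ with $t=(y^4+18by^2-27b^2)/y^3$, so that the quartic \'etale algebra is $k(t)[Y]/(Y^4-tY^3+18bY^2-27b^2)$; its trace form is $\langle 1,A,B,AB\rangle$ with $A=3t^2-144b$, $B=(192b-3t^2)(144b-3t^2)$, and the identity $(144b-3t^2)\cdot 4+(192b-3t^2)\cdot(-3)=-3t^2$ (together with $-3\in k^{\times 2}$) splits $(-A,-B)_2$, giving $w_2(q)=[(-1,-1)_2]$.

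Your argument for part (c) is genuinely different from, and more elementary than, the paper's. The paper shows that the class $\{C/k\}\in H^1(k,E)$ is $3$-torsion (via $(2+\alpha)(1-\alpha)=3$) and then invokes Lang--Tate to find a $k'$-rational point over a $3$-power-degree extension, after which a further cubic extension may be needed to produce a $g$-fixed point. Your approach bypasses the cohomology: the fixed scheme of the order-$3$ element $g$ is a reduced closed $k$-subscheme of length $\deg(1-\alpha)=3$, hence a degree-$3$ \'etale $k$-algebra, which always has a factor of odd degree. This gives directly a $g$-fixed point over an extension of degree at most $3$, and the Abel--Jacobi map based at that point is immediately $A_4$-equivariant. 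Your route is shorter and yields a sharper bound on $[k':k]$; the paper's approach, on the other hand, makes the torsor-theoretic reason for the result transparent.
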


\begin{proof}
We will extensively use the results and notation from \cite[\S X.3]{Sil09} (see also \cite{LT58}). Recall that $C$ is a principal homogeneous space under $E$. A $k$-automorphism $g\colon C \to C$ induces a group automorphism of $\mathrm{Pic}^0(C)$, hence also a $k$-automorphism $\hat{g}\colon E \to E$ fixing the origin. Explicitly, it is not hard to see that $\hat{g}(P) = g(p_0+P)-g(p_0)$, where the definition is independent of $p_0 \in C(\ol{k})$. Note also that $\hat{g}$ is the identity if and only if $g$ is a translation by an element of $E(k)$. This proves that we have an exact sequence
$$
\xymatrix{
1 \ar[r] & E(k) \ar[r] & \Aut(C)(k) \ar[r]^-\pi & \Aut_0(E)(k).
}
$$
Regard $A_4$ as a subgroup of $\Aut(C)(k)$. It follows that $E(k) \cap A_4 \cong (\ZZ/2\ZZ)^2$ and $\pi(A_4) = \ZZ/3\ZZ \subset \Aut_0(E)(k)$. By Lemma \ref{Lemma_Facts_Elliptic}(b), it follows that $j(E) = 0$.

We now proceed with the proof of part (b). Note that $E(k)$ contains a subgroup isomorphic to $(\ZZ/2\ZZ)^2$, whence the $2$-torsion points of $E$ are $k$-rational. Using Lemma \ref{Lemma_Facts_Elliptic}(b), we conclude from part (a) that $k$ contains a primitive third root of unity $\omega_3$ and $\Aut_0(E)(k) = \ZZ/6\ZZ$. We now explicitly construct the $A_4$-action on $E$. Since $E$ has $j$-invariant $0$, it has a Weierstrass equation $y^2 = x^3 + b$ for some $b \in k^\times$. Let the normal subgroup $V = (\ZZ/2\ZZ)^2 \subset A_4$ act on $E$ via translation by $2$-torsion points (as it does on $C$ as well). Then we can write $A_4 = V \rtimes H$, and let $H \cong \ZZ/3\ZZ$ act on $E$ by $\alpha\cdot (x,y) = (\omega_3 x,y)$, where $\alpha$ is a generator of $H$. For convenience, we fix the above notation for the remainder of the proof. 

To prove part (c), we first show that $C$ has a $k'$-rational point over some extension $k'/k$ of odd degree. Fix an element $g \in A_4 \subset \Aut(C)(k)$ of order $3$ and assume without loss of generality that $\hat{g} = \alpha$. Note that $g(q) = g(p) + \alpha(q-p)$ for any $p, q \in C(\ol{k})$. Taking $q = p^{\sigma}$ for any $\sigma \in \Gal(\ol{k}/k)$ and using the fact that $g$ is defined over $k$, we obtain that $g(p)^{\sigma}-g(p) = \alpha(p^\sigma - p)$, i.e., $(1 - \alpha)(p^\sigma - p) = P^\sigma - P$ for $P = p-g(p) \in E(\ol{k})$. By \cite[Thm. X.3.6]{Sil09}, it follows that the class $\{C/k\} \in H^1(k,E)$ belongs to the kernel of the map $(1 - \alpha)_*\colon H^1(k,E) \to H^1(k,E)$ induced by $1 - \alpha \in \mathrm{End}(E)$. However, note that $(2 + \alpha) \circ (1-\alpha) = 3$, which implies that the class $\{C/k\}$ is $3$-torsion. It follows that there exists an extension $k'/k$ such that $[k':k]$ is a power of $3$ and $C$ has a $k'$-rational point (see \cite[Prop. 5]{LT58} and the remark that follows). 

We claim that after possibly taking a cubic extension of $k'$, we can find an $A_4$-equivariant isomorphism $C \to E$. Fix a point $p_0 \in C(k')$. We would like to find $P_0 \in E(\ol{k})$ such that $(1-\alpha)(P_0) = g(p_0)-p_0 \in E(k')$. It is not hard to see that such a point $P_0$ can be found over some cubic extension of $k'$. (For example, this can be done by noting that the coordinates of $P_0$ satisfy cubic polynomials with coefficients in $k'$.) Without loss of generality, assume that $P_0 \in E(k')$ and define $q_0 = p_0+P_0 \in C(k')$. Note that $g(q_0) = g(p_0) + \alpha(P_0) = p_0 + P_0 = q_0$. We claim that the $k'$-isomorphism $\varphi\colon C \to E$ defined by $q \mapsto q-q_0$ is $A_4$-equivariant. Since it clearly commutes with translations, it suffices to show that $\varphi(g(q)) = \alpha (\varphi(q))$. We compute $\varphi(g(q)) = g(q) - q_0 = g(q) - g(q_0) = \alpha(\varphi(q))$, which completes the proof of the claim.

It remains to prove part (d). We reduce the problem to curves of genus $1$ with $k$-rational points. Assume the result is true in this case. Then we must have $\widehat{\Delta}(E) = [(-1,-1)_2]$ in $\Br_2(k(E)^{A_4})$, where $E$ is the Jacobian of $C$. By part (c), we can find an odd degree extension $k'/k$ such that $E_{k'} \cong C_{k'}$ as $A_4$-varieties. Therefore, we must have $\widehat{\Delta}(C_{k'}) = [(-1,-1)_2]$ in $\Br_2(k'(C)^{A_4})$. The natural map $\Br_2(k(C)^{A_4}) \to \Br_2(k'(C)^{A_4})$ is injective since $[k'(C)^{A_4}:k(C)^{A_4}]$ is odd, so it follows that $\widehat{\Delta}(C) = [(-1,-1)_2]$ in $\Br_2(k(C)^{A_4})$. This implies that it suffices to prove the statement for $E$.

We explicitly compute $\widehat{\Delta}(E) \in \Br(k(E)^{A_4})$. It is easy to check that the rational map $E \dashrightarrow \PP^1$ given by
$$
(x,y) \mapsto t = \frac{(y^4 + 18 b y^2 - 27b^2)}{y^3}
$$
is an $A_4$-invariant map of degree $12$, so it coincides with the rational quotient map $E \dashrightarrow E/A_4$. We may view the element $[E] \in H^1(k(t),A_4)$ as the $A_4$-Galois field extension $k(E)/k(t)$. Therefore, its corresponding {\' e}tale $k(t)$-algebra is (isomorphic to) the fixed field $k(E)^{H} = k(y)$ (recall that $A_4 = V \rtimes H$).  Note that $y$ is a root of 
$$
p(Y) = Y^4 - t Y^3 + 18 b Y^2 - 27 b^2,
$$  
so it follows that $k(y) = k(t)[Y]/(p(Y))$. A computation shows that the trace form of this {\' e}tale algebra is isomorphic to $\langle 1, A, B, AB\rangle$, where $A = 3 t^2 - 144 b$ and $B = (192 b - 3 t^2)(144 b - 3 t^2)$. It follows that its Stiefel-Whitney class is equal to $[(-A,-B)_2] + [(-1,-1)_2]$. By Proposition \ref{Prop_Computation_Invar_Polyhedral}, it suffices to show that $(-A,-B)_2$ is split over $k(t)$. Note that we have an isomorphism
$$
(-A,-B)_2 \cong (144 b - 3 t^2, 192 b - 3 t^2)_2.
$$
Recall that $-3$ is a square in $k$ because $k$ contains a primitive third root of unity. Hence the identity
$$
(144 b - 3 t^2) 2^2 + (192 b - 3 t^2) (\sqrt{-3})^2 = (\sqrt{-3}\: t)^2
$$
holds over $k(t)$, which proves that the above quaternion algebra is split.
\end{proof}

\subsection{Strong incompressibility}

\begin{prop} \label{Prop_Incompressibility_Polyhedral}
Let $G$ be a polyhedral group. The following are equivalent:
\begin{enumerate} [\upshape(i)]
\item There are no strongly incompressible $G$-curves defined over $k$.
\item $\mathrm{cd}_2(k)=0$.
\end{enumerate}
\end{prop}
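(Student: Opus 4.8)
The plan is to prove the two implications separately, in the spirit of Propositions~\ref{Prop_Klein_Incompressibility} and \ref{Prop_Even_Dihedral_Incompressibility}, but using the binary-cover invariant $\widehat{\Delta}$ of this section to control compressions, since $G$ need not embed into $\PGL_2$ over $k$. For (ii) $\Rightarrow$ (i), suppose $\mathrm{cd}_2(k)=0$. By Lemma~\ref{Lemma_Fields_cd2} the field $k$ is hereditarily quadratically closed, so $-1$ is a square and $\sqrt{2},\sqrt{5}\in k$; the hypotheses of Remark~\ref{Remark_Equivalence_Invariants} are therefore met, giving an embedding $\rho\colon G\hookrightarrow\PGL_2(k)$ with $\widehat{\Delta}=\Delta_\rho$. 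For any faithful $G$-curve $X$ the field $K=k(X)^G$ has transcendence degree $1$, so $\mathrm{cd}_2(K)\le 1$ and $\Br_2(K)=0$ by \cite{Se02}; hence $\Delta_\rho(X)=1$ and Corollary~\ref{Cor_Maps_P1} yields a $G$-compression $X\dashrightarrow\fourIdx{}{\rho}{}{}{\PP^1}$. It then suffices to show that $\fourIdx{}{\rho}{}{}{\PP^1}$ is not strongly incompressible, for then no faithful $G$-curve can be. Here I would produce a non-birational $G$-equivariant self-morphism of $\PP^1$: lifting $\rho$ to $\ol{\rho}\colon\widehat{G}\hookrightarrow\GL_2$ and taking a nonzero relative invariant binary form $F$ of the finite linear group $\ol{\rho}(\widehat{G})$ (a classical vertex, edge, or face covariant, defined over $k$ because its zero locus is a Galois-stable $G$-orbit on $\PP^1$), the covariant $(x:y)\mapsto(\partial_y F:-\partial_x F)$ is $G$-equivariant of degree $\deg F-1\ge 2$.

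For (i) $\Rightarrow$ (ii), suppose $\mathrm{cd}_2(k)>0$. By Lemma~\ref{Lemma_NoCompression_Small_Genus} it is enough to exhibit a faithful $G$-curve $X$ admitting no $G$-compression onto a curve of genus $\le 1$. The obstruction is $\widehat{\Delta}$: by Proposition~\ref{Prop_Invariant_Polyhedral} any such compression $X\to Y$ forces $\widehat{\Delta}(X)=i_*\widehat{\Delta}(Y)$, and Propositions~\ref{Prop_Invariant_Polyhedral_0} and \ref{Prop_Genus_1_Alternating}(d) evaluate $\widehat{\Delta}(Y)$ on every admissible target: it is $[(-1,-1)_2]$ for a genus-$0$ curve when $G$ is alternating and for a genus-$1$ curve when $G=A_4$ (recall $S_4$ and $A_5$ do not act on genus $1$), and $[(-1,-1)_2]+[(2,t)_2]$ for a genus-$0$ curve when $G=S_4$. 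For alternating $G$ these classes come from $\Br_2(k)$, so all their pullbacks equal the single constant class $[(-1,-1)_2]\in\Br_2(K)$; thus for $A_4$ and $A_5$ it suffices to construct a faithful $G$-curve $X$ with $\widehat{\Delta}(X)\ne[(-1,-1)_2]$, while for $S_4$ one must keep $\widehat{\Delta}(X)$ out of the family $[(-1,-1)_2]+[(2,f)_2]$, $f\in K^\times$.

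To build such an $X$ I would write $K=k(t)$ and realize a class in $H^1(k(t),G)$ as an explicit $G$-Galois cover of $\PP^1_t=\Spec k(t)$, then compute $\widehat{\Delta}(X)$ from the trace form of its associated \'etale algebra via Serre's formula (Proposition~\ref{Prop_Computation_Invar_Polyhedral}), exactly as in the genus-$0$ and genus-$1$ computations. The hypothesis $\mathrm{cd}_2(k)>0$ enters, as in Construction~\ref{Constr_Curve_Klein}, through an element $\xi\in\ol{k}$ with $\xi\notin k(\xi)^{\times 2}$ furnished by Lemma~\ref{Lemma_Fields_cd2}; the cover is to be chosen so that the quaternion summand of $\widehat{\Delta}(X)$ carries $\xi$, and Lemma~\ref{Lemma_Square_Quaternions}, applied at a suitable root, shows this summand to be non-split and not of the excluded shape. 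A convenient way to organize both the construction and the verification is to restrict everything to the Klein four-subgroup $W\cong(\ZZ/2\ZZ)^2\subset G$, whose preimage in $\widehat{G}$ is in each case the quaternion group $Q_8$: a $G$-compression restricts to a $W$-compression, so it is enough to arrange that the $W$-curve underlying $X$ is of the type produced in Proposition~\ref{Prop_Curve_Klein} and Construction~\ref{Constr_Curve_Klein}, whose quaternion invariant is kept non-split by Lemma~\ref{Lemma_Curve_Constr_Genus0} and all of whose $W$-elements fix geometric points, blocking genus $1$ by Lemma~\ref{Lemma_Curve_Constr_Genus1}.

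The hard part will be the explicit equivariant construction. The Klein-type square-root data must be stabilized by the complementary symmetry $G/W$ — a $3$-cycle for $A_4$, a copy of $S_3$ for $S_4$, and, since $W$ is not normal in the simple group $A_5$, the entire $A_5$-cover must be produced directly — which forces a symmetric choice of the branch polynomials over a possibly non-split cubic extension of $k(t)$ and makes the fixed-point bookkeeping delicate. A secondary difficulty is the extra term $[(2,t)_2]$ in the $S_4$ invariant, which is nontrivial precisely when $\sqrt{2}\notin k$: when $-1$ is a sum of two squares one can instead embed $S_4\hookrightarrow\PGL_2(k)$ and argue with $\Delta_\rho$ and Lemma~\ref{Lemma_Square_Quaternions} as in Proposition~\ref{Prop_Even_Dihedral_Incompressibility}, and otherwise one must verify directly that the constructed class is never of the form $[(-1,-1)_2]+[(2,f)_2]$.
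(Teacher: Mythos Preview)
Your plan for (ii) $\Rightarrow$ (i) matches the paper: the hypotheses of Remark~\ref{Remark_Equivalence_Invariants} hold, $\Br_2(K)$ vanishes, Corollary~\ref{Cor_Maps_P1} gives the compression, and one finishes by exhibiting a non-birational $G$-equivariant self-map of $\PP^1$. The paper does this last step by writing down explicit covariants of degrees $7$ and $11$ (Lemma~\ref{Lemma_Comp_Polyhedral_P1}); your Jacobian-covariant idea is a valid conceptual version of the same construction.

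For (i) $\Rightarrow$ (ii) your overall strategy is right (obstruct with $\widehat{\Delta}$, using Propositions~\ref{Prop_Invariant_Polyhedral}, \ref{Prop_Invariant_Polyhedral_0}, \ref{Prop_Genus_1_Alternating}(d)), but the construction of $X$ is a genuine gap, and your proposed route via the Klein subgroup does not close it. Restricting to $W$ tells you what the answer should look like, but it does not produce a $G$-curve: for $A_4$ and $S_4$ you would need to show that the bi-quadratic data of Construction~\ref{Constr_Curve_Klein} can be made $G/W$-symmetric, which is far from automatic, and for $A_5$ you yourself note that $W$ is not normal, so the reduction collapses entirely and you are back to building the $A_5$-cover from scratch with no mechanism offered.

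The paper bypasses all of this. It never tries to write down an explicit cover or to pass through $W$; instead it works directly with the \'etale-algebra description of $H^1(K,G)$. Over $K=k(x)$ it first uses Lemma~\ref{Lemma_Etale_Algebra} to manufacture an $n$-dimensional \'etale $K$-algebra $E$ whose trace form has prescribed $w_2$ (and, for $G=S_4$, prescribed nontrivial discriminant), so that Serre's formula forces $\widehat{\Delta}=[A]+[(-1,-1)_2]$ for a chosen nonsplit quaternion algebra $A$. It then invokes the realization theorem of Epkenhans--Kr\"uskemper \cite{EK94} (together with the Hilbertianity of $K$) to replace $E$ by a genuine degree-$n$ field extension $L/K$ with the same trace form and with Galois closure of group $G$; this is the step your proposal is missing. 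For $S_4$ the extra $[(2,t)_2]$ term is handled not by a case split on whether $-1$ is a sum of two squares, but uniformly by choosing $A$ nonsplit over $k(\sqrt{2})(x)$ and base-changing the putative compression to $k(\sqrt{2})$, where the $(2,t)$ term dies and the contradiction with Proposition~\ref{Prop_Invariant_Polyhedral_0}(b) is immediate.
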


\begin{proof}[Proof of (ii) $\Rightarrow$ (i)]
Suppose that $\mathrm{cd}_2(k)=0$. By Lemma \ref{Lemma_Fields_cd2}, it follows that $k$ satisfies the hypotheses of Lemma \ref{Lemma_Comp_Polyhedral_P1} below. In particular, there exists an embedding $\rho\colon G \hookrightarrow \PGL_2$ defined over $k$. We claim that any faithful $G$-curve $X$ can be $G$-compressed to $\fourIdx{}{\rho}{}{}{\PP^1}$. Indeed, the field $K = k(X)^G$ satisfies $\mathrm{cd}_2(K) = 1$ and therefore, $\Br_2(K) = 1$. Hence $\Delta_\rho(X) = 1$ and the claim follows from Corollary \ref{Cor_Maps_P1}. To finish the proof, we must show that $\fourIdx{}{\rho}{}{}{\PP^1}$ is not strongly incompressible. This is achieved in Lemma \ref{Lemma_Comp_Polyhedral_P1}.
\end{proof}

\begin{lemma} \label{Lemma_Comp_Polyhedral_P1}
Let $G$ be a polyhedral group. Suppose that $\omega_4 \in k$ if $G = A_4$ or $S_4$ (resp. $\omega_5 \in k$ if $G = A_5$), and let $\rho\colon G \hookrightarrow \PGL_2$ be an embedding defined over $k$ (it is unique up to conjugacy). Then the $G$-variety $\fourIdx{}{\rho}{}{}{\PP^1}$ is not strongly incompressible.
\end{lemma}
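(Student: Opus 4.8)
The plan is to show that $\fourIdx{}{\rho}{}{}{\PP^1}$ admits a non-birational $G$-equivariant self-morphism defined over $k$. Such a morphism is a $G$-compression of $\fourIdx{}{\rho}{}{}{\PP^1}$ onto the faithful $G$-variety $\fourIdx{}{\rho}{}{}{\PP^1}$ itself which is not an isomorphism, so its existence immediately establishes that $\fourIdx{}{\rho}{}{}{\PP^1}$ is not strongly incompressible. The mechanism I would use is the classical observation that the gradient of a $G$-semi-invariant binary form yields a $G$-equivariant self-map of $\PP^1$; this recovers, in a uniform way, the explicit maps used for cyclic and dihedral groups in Lemma \ref{Lemma_Compr_Cyclic_Dihedral}.

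First I would produce a suitable semi-invariant. Since $\mathrm{char}(k)=0$, the field $k$ is infinite, and every nonidentity element of $G \subset \PGL_2(k)$ fixes at most two geometric points of $\PP^1$; hence the locus of points with nontrivial stabiliser is finite and I may choose a point $p \in \PP^1(k)$ whose $G$-stabiliser is trivial. Its orbit $Gp$ then consists of $N=|G|$ distinct $k$-rational points, and I let $F \in k[u,v]$ be the squarefree homogeneous form of degree $N$ vanishing exactly on $Gp$. Because $Gp$ is $G$-invariant and every element of $G$ lifts to $\GL_2(k)$ by Hilbert's Theorem $90$, for each such lift $\gamma$ the form $F\circ\gamma$ has the same divisor as $F$; thus $F$ is a semi-invariant, i.e. $F\circ\gamma=\mu(\gamma)\,F$ for some $\mu(\gamma)\in k^\times$.

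The key computation is that the assignment $[u:v]\mapsto[-F_v:F_u]$ is $G$-equivariant. Writing $\nabla F=(F_u,F_v)^{T}$ and $\Psi=(-F_v,F_u)^{T}=J^{-1}\nabla F$ with $J=\left(\begin{smallmatrix}0&1\\-1&0\end{smallmatrix}\right)$, I would differentiate the relation $F(\gamma w)=\mu(\gamma)F(w)$ to obtain $(\nabla F)(\gamma w)=\mu(\gamma)\,(\gamma^{T})^{-1}(\nabla F)(w)$, and then combine it with the identity $(\gamma^{T})^{-1}=(\det\gamma)^{-1}J\gamma J^{-1}$ to deduce $\Psi(\gamma w)=\mu(\gamma)(\det\gamma)^{-1}\,\gamma\,\Psi(w)$. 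As $\Psi(\gamma w)$ and $\gamma\,\Psi(w)$ are proportional, the induced map on $\PP^1$ commutes with every $\gamma$, so it is a $G$-equivariant self-map of $\fourIdx{}{\rho}{}{}{\PP^1}$ over $k$. To compute its degree I would invoke Euler's relation $uF_u+vF_v=NF$: any common zero of $F_u$ and $F_v$ would be a root of $F$ at which both partials vanish, hence a multiple root, contradicting that $F$ is squarefree. Therefore $F_u$ and $F_v$ are coprime, the map is a morphism, and its degree is exactly $N-1=|G|-1\ge 11$.

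The lemma then follows, since a self-morphism of $\PP^1$ of degree $\ge 2$ is never birational. I expect the main obstacle to be bookkeeping over the ground field rather than anything geometric: one must verify that the orbit $Gp$, and hence the form $F$, is genuinely defined over $k$ and that each element of $G$ admits a $\GL_2(k)$-lift, both of which reduce to $k$ being infinite and to Hilbert's Theorem $90$. I would emphasise that the hypotheses $\omega_4\in k$ (resp. $\omega_5\in k$) enter \emph{only} through the existence of the embedding $\rho$ over $k$; once $\fourIdx{}{\rho}{}{}{\PP^1}$ is defined over $k$, the construction above applies uniformly to $A_4$, $S_4$, and $A_5$.
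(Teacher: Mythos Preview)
Your argument is correct and gives a uniform, conceptual proof. The paper, by contrast, proceeds by brute force: it writes down specific generators of $\rho(S_4)$ and $\rho(A_5)$ in $\PGL_2(k)$ and then exhibits the explicit self-maps
\[
(x:y)\mapsto(7x^4y^3+y^7:-x^7-7x^3y^4)\quad\text{and}\quad (x:y)\mapsto(x^{11}+66x^6y^5-11xy^{10}:-11x^{10}y-66x^5y^6+y^{11}),
\]
leaving the equivariance as a computation; $A_4$ is handled by restriction from $S_4$. In fact these formulas are themselves instances of your gradient construction, applied not to a generic orbit form but to the classical low-degree invariants $x^8+14x^4y^4+y^8$ and $xy(x^{10}+11x^5y^5-y^{10})$ attached to the exceptional orbits. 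So the underlying mechanism is the same; the difference is that your choice of $F$ (the form cut out by a free orbit) makes the argument self-contained and uniform across all three groups, at the cost of producing maps of larger degree $|G|-1$ rather than the minimal degrees $7$ and $11$. Your approach also makes transparent why the maps are defined over $k$, whereas in the paper this is buried in the verification.
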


\begin{proof}
As the group $A_4$ is contained in $S_4$, it suffices to find non-birational compressions for $S_4$ and $A_5$. 

Case 1: Suppose that $G = S_4$. The matrices
$$
\left(\begin{array}{cc}
\omega_4 & 0\\
0 & 1
\end{array}\right), \quad
\left(\begin{array}{cc}
\omega_4 & \omega_4\\
-1 & 1
\end{array}\right),
$$
generate a subgroup isomorphic to $S_4$ inside $\PGL_2(k)$, whence we may assume that $\rho(G)$ is this particular subgroup. Then an easy computation shows that
$$
(x:y) \mapsto (7 x^4 y^3 + y^7: -x^7 - 7 x^3 y^4)
$$
is a $G$-compression $\fourIdx{}{\rho}{}{}{\PP^1} \to \fourIdx{}{\rho}{}{}{\PP^1}$, which is clearly not birational.

Case 2: Suppose that $G = A_5$. Consider the matrices
$$
\left(\begin{array}{cc}
\omega_5 & 0\\
0 & 1
\end{array}\right), \quad
\left(\begin{array}{cc}
\omega_5+\omega_5^{-1} & 1\\
1 & -\omega_5-\omega_5^{-1}
\end{array}\right);
$$
they generate a subgroup isomorphic to $A_5$ inside $\PGL_2(k)$. Again, assume that $\rho(G)$ coincides with this subgroup. Then the morphism
$$
(x:y) \mapsto (x^{11} + 66 x^6 y^5 - 11 x y^{10} : -11 x^{10} y - 66 x^5 y^6 + y^{11})
$$
is a non-birational $G$-compression $\fourIdx{}{\rho}{}{}{\PP^1} \to \fourIdx{}{\rho}{}{}{\PP^1}$.
\end{proof}

It remains to prove (i) $\Rightarrow$ (ii) in Proposition \ref{Prop_Incompressibility_Polyhedral}. The following lemma will be useful in the sequel.

\begin{lemma} \label{Lemma_Etale_Algebra}
Let $k$ be a field, let $(a,b)_2$ be a quaternion algebra defined over $k$ and let $n \geq 4$ be an integer. Then we have the following properties.
\begin{enumerate} [\upshape(a)]
\item There exists an $n$-dimensional {\' e}tale $k$-algebra $E_1$ with trivial discriminant such that the Stiefel-Whitney class $w_2(q_{E_1}) = [(a,b)_2] + [(-1,-1)_2]$,
\item If $(a,b)_2 \not\cong (-1,-1)_2$, there exists an $n$-dimensional {\' e}tale $k$-algebra $E_2$ with nontrivial discriminant $d_{E_2}$ such that $w_2(q_{E_2}) = [(a,b)_2] + [(-1,-d_{E_2})_2]$.
\end{enumerate}
\end{lemma}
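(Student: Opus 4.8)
The plan is to build both algebras from an explicit degree-$4$ piece and then pad with split factors. The underlying reduction is that if $F$ is any \'etale $k$-algebra and $E = F \times k^{\,m}$, then $q_E = q_F \perp \langle 1,\dots,1\rangle$, so by the Whitney sum formula $w(q_E) = w(q_F)\,w(\langle 1\rangle)^{m} = w(q_F)$; in particular the discriminant $d_E$ and the class $w_2(q_E)$ are unchanged. Hence it suffices to produce a $4$-dimensional $F$ with the prescribed discriminant and Stiefel--Whitney class and then set $E_i = F \times k^{\,n-4}$ for any $n \geq 4$. Throughout I will use that for a diagonal form $\langle c_1,\dots,c_r\rangle$ one has $w_2 = \sum_{i<j}[(c_i,c_j)_2]$, together with the identities $(c,-c)_2 = 0$, $(c,c)_2 = (c,-1)_2$ and the Steinberg relation $(c,1-c)_2 = 0$.

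For part (a) I would take the biquadratic algebra $F = k[\sqrt{-a},\sqrt{-b}]$. In the orthogonal basis $1,\sqrt{-a},\sqrt{-b},\sqrt{ab}$ its trace form is $\langle 1,-a,-b,ab\rangle$, i.e. the $2$-fold Pfister form attached to $(a,b)_2$; its discriminant is $a^2b^2 \equiv 1$, hence trivial. The very computation used in Proposition \ref{Prop_Invariant_Polyhedral_0}(a), namely $w_2(\langle 1,u,v,uv\rangle) = [(-u,-v)_2]+[(-1,-1)_2]$ applied with $u=-a$, $v=-b$, then yields $w_2(q_F) = [(a,b)_2]+[(-1,-1)_2]$, as required. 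The degenerate cases (one of $-a,-b,ab$ a square) only split $F$ further and are checked by hand.

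For part (b) the discriminant must be nontrivial, so $F$ cannot be biquadratic (its Galois group would sit in $A_4$ and force $d_F$ to be a square); instead I would take a quartic field $F = k[x]/(x^4+px^2+r)$. Writing the Gram matrix of $q_F$ in the power basis $1,\theta,\theta^2,\theta^3$ and applying Newton's identities, the odd and even parts decouple (all odd power sums vanish), giving $q_F \cong \langle 1,\,p^2-4r,\,-2p,\,-2pr(p^2-4r)\rangle$ with $d_F \equiv r$. A direct expansion, using the relations above, yields $w_2(q_F) = [(p^2-4r,-r)_2] + [(-2p,\,-r(p^2-4r))_2]$. Taking $d_{E_2}=r$ and demanding $w_2(q_F) = [(a,b)_2]+[(-1,-r)_2]$ reduces, after simplification, to the single symbol identity
\[
[(a,b)_2] = [(\,2p(p^2-4r),\,-r\,)_2] + [(\,-2p,\,p^2-4r\,)_2].
\]
The plan is to solve this for $p,r \in k$ with $r$ a nonsquare.

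The hard part will be exactly this solvability, which is the real content of the lemma. I expect to argue that the hypothesis $(a,b)_2 \not\cong (-1,-1)_2$ — equivalently, that $[(a,b)_2]+[(-1,-1)_2]$ is nonzero, i.e. $\langle 1,-a,-b,ab\rangle$ is not hyperbolic — is precisely what lets one pick a nonsquare $r$ and a value $p$ realizing the right-hand side as $(a,b)_2$. Conversely the hypothesis cannot be dropped: over a real closed field every nonsquare $r$ is negative, so $-r$ and $p^2-4r$ are both positive, both symbols on the right split, and the right-hand side is identically trivial; thus $(-1,-1)_2$ is unrealizable in dimension $4$ there, and this obstruction already surfaces at $n=4$, which is why the condition must be imposed for all $n\geq 4$. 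I would also note that for larger $n$ there is extra room (one may introduce a nontrivial discriminant through additional factors such as $k(\sqrt d)$ or $\CC$-like quadratic fields), so the binding case is genuinely $n=4,5$, where the quartic field above is essentially the only construction available. Finally I would dispose of the degenerate parameter values — where the quartic factors as a product of quadratics or as a cubic times a line — by separate, easier direct computations.
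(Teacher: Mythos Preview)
Your reduction to dimension $4$ by padding with copies of $k$ is correct and is exactly what the paper does. For part~(a), your biquadratic construction $F=k[x,y]/(x^2+a,y^2+b)$ is a valid and in fact cleaner alternative to the paper's approach: the paper uses the one--parameter family $E[A,B]=k[X]/(X^4-2AX^2+B)$ with a rather delicate choice of $A,B$ to force $B$ into $k^{\times 2}$, whereas your $F$ has trace form $\langle 1,-a,-b,ab\rangle$ directly. So part~(a) is fine.

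Part~(b), however, has a real gap. Your family $k[x]/(x^4+px^2+r)$ is, up to reparametrization $A=-p/2$, $B=r$, precisely the paper's $E[A,B]$, so you are on the same track. But you stop at the symbol equation
\[
[(a,b)_2] \;=\; [(\,2p(p^{2}-4r),\,-r\,)_2]\;+\;[(\,-2p,\,p^{2}-4r\,)_2]
\]
and declare that the hypothesis $(a,b)_2\not\cong(-1,-1)_2$ ``is precisely what lets one pick'' suitable $p,r$. That expectation is correct, but it is the entire content of the lemma, and you have not supplied it. The paper's resolution is a specific trick you have not found: first change the presentation of $(a,b)_2$ so that $-b\notin k^{\times 2}$ and $b\neq 1$ (this is where the hypothesis is actually used), then set $p=2a$ and $r=-4ba^{2}/(b-1)^{2}$. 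The point of this choice is that
\[
p^{2}-4r \;=\; 4a^{2}\,\frac{(b+1)^{2}}{(b-1)^{2}}\;\in\; k^{\times 2},
\]
which kills the second symbol in your identity and turns the first into $[(2p,-r)_2]=[(a,b)_2]$. The discriminant is $r\equiv -b$, nontrivial by construction. Without this (or an equivalent explicit choice), ``I expect to argue'' is not a proof; your observation about real closed fields shows that the hypothesis is necessary, but it does not show sufficiency.
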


\begin{proof}
It suffices to prove the results for $n=4$, as adding copies of the trivial {\' e}tale algebra $k$ to $E$ does not change the discriminant of $E$, or $w_2(q_E)$. By \cite[Lemma 31.19]{GMS03}, the $k$-algebra $E[A,B] = k[X]/(X^4-2 A X^2 + B)$ is {\' e}tale when $AB(A^2-B) \neq 0$, has discriminant $64 B (A^2 - B)^2$, and its trace form is isomorphic to $\langle 1, A, A^2-B, AB(A^2-B)\rangle$. An easy computation shows that 
$$
w_2(q_{E[A,B]}) = [(-A, -B(A^2-B))_2] + [(-1,-B)_2].
$$ 

To prove part (a), select $c \in k^{\times}$ such that $b^2 c^4 - 1 \neq 0$, and put $A = -a(b c^2 - 1)^2$ and $B = a^2(b^2 c^4-1)^2$. It is easy to see that $-A \equiv a \mod k^{\times 2}$ and $-B(A^2-B) \equiv b \mod k^{\times 2}$, whence $E_1 = E[A,B]$ satisfies the required properties.

To prove part (b), we may assume without loss of generality that $-b \not\in k^{\times 2}$ and $b \neq 1$, by changing the presentation of $(a,b)_2$ if necessary. Define $A = -a$ and $B = -4 b a^2/(b-1)^2$; then we obtain that $A^2 - B \in k^{\times 2}$. The algebra $E_2 = E[A,B]$ has discriminant $-b \not\in k^{\times 2}$ and satisfies $w_2(q_{E_2}) = [(a,b)_2] + [(-1,b)_2]$.  
\end{proof}

\begin{remark}
The conclusion in part (b) of the above theorem might fail if $(a,b)_2 \cong (-1,-1)_2$. Indeed, suppose that $k = \mathbb{R}$. By \cite[Thm. 31.18]{GMS03}, we observe that the trace form of any $4$-dimensional {\' e}tale algebra $E$ has the form $q_E = \langle 1, A, A^2-B, A B (A^2-B) \rangle$, which has second Stiefel-Whitney invariant $w_2(q_E) = [(-A, -B(A^2-B))_2] + [(-1,-B)_2]$. Since we want the discriminant to be nontrivial, $B$ must be negative, so $w_2(q_E) = [(-A, A^2-B)_2]$. This class is obviously trivial because $A^2-B > 0$, so we cannot obtain $[(-1,-1)_2]$.
\end{remark}

\begin{proof}[Proof of (i) $\Rightarrow (ii)$ in Proposition \ref{Prop_Incompressibility_Polyhedral}] Suppose that $\mathrm{cd}_2(k)>0$ and let $K = k(x)$. Note in particular that the field $K$ is Hilbertian (see \cite[Prop. 13.2.1]{FJ08}).

Case 1: Suppose that $G = A_n$, where $n = 4$ or $5$. By Lemma \ref{Lemma_Fields_cd2}, there exists a nonsplit quaternion algebra $A$ defined over $K$. Using Lemma \ref{Lemma_Etale_Algebra}(a), we can construct an $n$-dimensional {\' e}tale $K$-algebra $E$ with trivial discriminant such that $w_2(q_E) = [A] +[(-1,-1)_2]$. By \cite[Thm. 1]{EK94}, there exists a field extension $L/K$ of degree $n$ whose trace form is isometric to $q_E$; moreover, we may assume that its Galois closure $\widetilde{L}/K$ has Galois group $G$. Therefore, the class of $L$ (viewed as an {\' e}tale $K$-algebra) in $H^1(K,G)$ corresponds to a faithful $G$-curve $X$ defined over $k$ with function field $\widetilde{L}$. By Proposition \ref{Prop_Computation_Invar_Polyhedral}, we must have $\widehat{\Delta}(X) = [A]+[(-1,-1)_2]$.

We claim that $X$ cannot be $G$-compressed to any curve of genus $\leq 1$. Any faithful $G$-curve of genus $0$ is $G$-equivariantly isomorphic to $X_0$. Suppose that there exists a $G$-compression $X \to X_0$. By Proposition \ref{Prop_Invariant_Polyhedral}, the image of $\widehat{\Delta}(X_0)$ in $\Br_2(K)$ under the induced map is equal to $\widehat{\Delta}(X) = [A] + [(-1,-1)_2]$. By Proposition \ref{Prop_Invariant_Polyhedral_0}(a), it follows that $[A]$ is trivial, which is a contradiction. 

If $G = A_5$, the claim follows because $A_5$ does not act on any curve of genus $1$. On the other hand, suppose that there exists a $A_4$-compression $X \to C$, where $C$ has genus $1$. (A word of warning: Here we cannot assure that $C$ is an elliptic curve because it might not have $k$-rational points.) As before, it follows that $\widehat{\Delta}(C)$ maps to $\widehat{\Delta}(X) \in \Br_2(K)$ under the map induced by the compression. However, Lemma \ref{Prop_Genus_1_Alternating}(d) contradicts the fact that $A$ is not split. This completes the proof of the claim.                                                                                                                                                                                                                                                                                                                                                                                                                                                                                                                                                                                                                                                                                                                                                                                                                                                                                                                                                                                                                                                                                                                                                                                                                                                                                                                                                                                                                                                                                                                                                                                                                                                                                                                                                                                                                                                                                                                                                                                                                                                                                                                                                                                                                                                        By Lemma \ref{Lemma_NoCompression_Small_Genus}, there exist strongly incompressible $G$-curves.

Case 2: Suppose that $G = S_4$. We claim that there exists a quaternion algebra $A \not\cong (-1,-1)_2$ over $K$ which does not split over $k'(x)$, where $k' = k(\sqrt{2})$. If $2$ is a square in $k$ and $(-1,-1)_2$ is split over $K$, the result follows immediately from Lemma \ref{Lemma_Fields_cd2}. If $2$ is a square but $(-1,-1)_2$ is not split over $K$, we choose $A = (-1,x)_2$. Note that $A \cong (-1,-1)_2$ over $K$ if and only if $(-1,-x)_2$ is split. By Lemma \ref{Lemma_Square_Quaternions}, if either $A$ is split or $A \cong (-1,-1)_2$, it would follow that $-1$ is a square in $k$, which contradicts our assumption that $(-1,-1)_2$ is not split. 

Finally, if $2$ is not a square over $k$, we choose $A = (x, x^2 - 4 x + 2)_2$. Suppose for the sake of contradiction that $A$ splits over $k'(x)$. By Lemma \ref{Lemma_Square_Quaternions}, $2 +\sqrt{2}$ is a square over $k'$, i.e., $2 + \sqrt{2} = (l_1 + l_2 \sqrt{2})^2$ for some $l_1, l_2 \in k$. Taking norms with respect to $k'/k$ yields $2 = (l_1^2 - 2 l_2^2)^2$, which contradicts our assumption. We now prove that $A \not\cong (-1,-1)_2$, where we may assume that $(-1,-1)_2$ is not split. Indeed, such an isomorphism would imply that the quadratic forms $\langle 1, 1, 1 \rangle$
and $\langle -x, -(x^2-4x+2), x(x^2-4x+2) \rangle$ are isomorphic over $K$. It follows that $\langle 1, 1, 1 \rangle$ represents $-x$, i.e., there exist coprime polynomials $p ,q, r, s \in k[x]$ such that $p(x)^2+q(x)^2+r(x)^2 = -x s(x)^2$. Making $x = 0$ yields $p(0) = q(0) = r(0) = 0$ since we are assuming that $\langle 1, 1, 1 \rangle$ is anisotropic over $k$. This implies that $p(x) ,q(x), r(x)$ are divisible by $x$, whence $s(x)$ is divisible by $x$ as well. This contradicts the fact that $p, q, r, s$ are coprime.

By Lemma \ref{Lemma_Etale_Algebra}(b), we can construct a $4$-dimensional {\' e}tale $K$-algebra $E$ with nontrivial discriminant $d_E$ such that $w_2(q_E) = [A]+[(-1,-d_E)]$. By \cite[Thm. 1]{EK94}, we can find a field extension $L/K$ of degree $4$ whose trace form is isometric to $q_E$, whose Galois closure $\widetilde{L}/K$ has Galois group $G$. As before, its class in $H^1(K,G)$ corresponds to a faithful $G$-curve $X$ defined over $k$ with function field $\widetilde{L}$. By Proposition \ref{Prop_Computation_Invar_Polyhedral}, it follows that $\widehat{\Delta}(X) = [A] + [(-1,-1)_2] + [(2, d_E)_2]$. 

As in Case 1, suppose that there is a $G$-compression $f\colon X \to X_0$ and let $f'\colon X' \to X_0'$ be the base extension of $f$ to $k' = k(\sqrt{2})$. There exists a commutative diagram 
$$
\xymatrix{\Br_2(k(X_0)^G) \ar[r]^-{i_*} \ar[d]_-{j_0} & \Br_2(K) \ar[d]_-{j}\\
\Br_2(k'(X_0')^G) \ar[r]^-{i'_*} & \Br_2(k'(x))}
$$
where the vertical arrows are induced by base extension and the horizontal arrows are induced by $f$ and $f'$.
By Proposition \ref{Prop_Invariant_Polyhedral}, we must have $i_*(\widehat{\Delta}(X_0)) = \widehat{\Delta}(X)$ in $\Br_2(K)$. By Proposition \ref{Prop_Invariant_Polyhedral_0}(b), it follows that $j_0(\widehat{\Delta}(X_0)) = [(-1,-1)_2]$, since $2$ is a square in $k'$. Consequently, we conclude that
$$
[(-1,-1)_2] = i'_*(j_0(\widehat{\Delta}(X_0))) = j(i_*(\widehat{\Delta}(X_0))) = j(\widehat{\Delta}(X)) = [A] + [(-1,-1)_2],
$$ 
whence $A$ must be split over $k'(x)$. This contradicts our initial assumption.

Since $G$ does not act faithfully on any curve of genus $1$, it follows from Lemma \ref{Lemma_NoCompression_Small_Genus} that there exist strongly incompressible $G$-curves.
\end{proof}

\appendix
\section{Proof of Theorem \ref{Theorem_Curve_FixedPoints}} \label{Appendix_Curves}

\begin{lemma} \label{Lemma_Appendix_Polynomials}
Let $P, Q$ be two polynomials in $k[x]$, not both zero, and let $A \subset \ol{k}$ be the set of their common roots. Then for all but finitely many $c \in k$, the polynomial $P + c\, Q$ has no multiple roots outside of $A$. 
\end{lemma}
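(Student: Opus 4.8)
The plan is to reduce the existence of a multiple root outside $A$ to a vanishing condition on the single auxiliary polynomial $W = P'Q - PQ'$, which I think of as the numerator of the derivative of the rational function $P/Q$. The point is that a multiple root forces two linear conditions on $c$ simultaneously, and eliminating $c$ between them produces a constraint that does not involve $c$ at all.

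First I would record the elementary observation that pins down the bad values of $c$. Suppose $\alpha \in \ol{k} \setminus A$ is a multiple root of $P + cQ$, so that $P(\alpha) + cQ(\alpha) = 0$ and $P'(\alpha) + cQ'(\alpha) = 0$. If $Q(\alpha) = 0$, the first equation forces $P(\alpha) = 0$ as well, placing $\alpha$ in $A$, a contradiction; hence $Q(\alpha) \neq 0$ and $c = -P(\alpha)/Q(\alpha)$ is uniquely determined by $\alpha$. Substituting $P(\alpha) = -cQ(\alpha)$ and $P'(\alpha) = -cQ'(\alpha)$ into $W$ gives $W(\alpha) = -cQ'(\alpha)Q(\alpha) + cQ(\alpha)Q'(\alpha) = 0$. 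Thus every multiple root lying outside $A$ is a root of $W$, and the value of $c$ it can produce is determined by that root.

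The proof then splits according to whether $W$ vanishes identically. If $W \not\equiv 0$, then $W$ has only finitely many roots $\alpha$, and each contributes at most the single value $-P(\alpha)/Q(\alpha)$ to the set of $c$ that can yield a multiple root outside $A$; hence only finitely many $c \in k$ are bad, which is exactly the claim. It remains to dispose of the degenerate case $W \equiv 0$. Here I would invoke that we work in characteristic $0$: if $Q \neq 0$ then $(P/Q)' = W/Q^2 = 0$ forces $P/Q$ to be a constant $\lambda \in k$, so $P = \lambda Q$, and the symmetric argument covers $Q = 0$ (where $P \neq 0$ since $P, Q$ are not both zero). In either situation $P + cQ$ is a scalar multiple of a fixed polynomial whose root set is precisely $A$, so for all $c$ save the one making $P + cQ$ vanish identically, $P + cQ$ has no root at all outside $A$. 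Combining the two cases completes the argument.

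The one genuinely delicate point — the step I would treat most carefully — is the degenerate case: one must recognize that $W \equiv 0$ is not an obstruction but instead forces $P$ and $Q$ to be proportional, after which the conclusion holds vacuously. This is also exactly where the characteristic-$0$ hypothesis is used, to pass from a vanishing logarithmic derivative to genuine proportionality.
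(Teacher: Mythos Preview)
Your proof is correct and follows essentially the same approach as the paper: both arguments hinge on the auxiliary polynomial $W = P'Q - PQ'$ and the observation that a multiple root outside $A$ must be a zero of $W$, after which finiteness follows. The only organizational difference is that the paper first reduces to coprime $P,Q$ (which forces $W \not\equiv 0$ immediately, since $P \mid P'Q$ would give $P \mid P'$), whereas you skip the reduction and instead dispose of the degenerate case $W \equiv 0$ at the end by showing it forces $P$ and $Q$ to be proportional; these are two sides of the same coin.
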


\begin{proof}
It suffices to show that given two coprime polynomials $P, Q \in k[x]$, the polynomial $P + c\, Q$ has simple roots for all but finitely many $c \in k$. If both polynomials are constant, the result is immediate, so we may assume that is not the case. Note that $\xi \in \ol{k}$ is a multiple root of $P + c\, Q$ if and only if $P(\xi) + c\, Q(\xi) = P'(\xi) + c\, Q'(\xi) = 0$, which implies that $P(\xi) Q'(\xi) - P'(\xi) Q(\xi) = 0$. The polynomial $P Q' - P' Q$ cannot be identically zero because $P$ and $Q$ are coprime and not both constant, so it has finitely many roots. If we take $c \in k$ outside the finite set
$$
\{-P(\xi)/Q(\xi) \:|\: \xi \in \ol{k} \textrm{ satisfies } P(\xi) Q'(\xi) - P'(\xi) Q(\xi) = 0,\ Q(\xi) \neq 0 \},
$$ 
it follows that $P + c\, Q$ has simple roots. The proof is complete.
\end{proof}

\begin{defin}
We define a {\em ramification condition} to be an $l$-tuple of integers $\PPP = (b_1, \ldots, b_l)$, where $l \geq 1$ and $b_i \geq 2$ for all $i$. We say that $P \in k[x]$ has a {\em local decomposition of type $\PPP$ at $\beta \in k$}, if there exists a factorization
$$
P(x) - \beta = a (x-\alpha_1)^{b_1}\ldots (x-\alpha_l)^{b_l} (x-\alpha_{l+1}) \ldots (x-\alpha_r),
$$
where $a$ is the leading coefficient of $P$, and $\alpha_1, \ldots, \alpha_r$ are distinct elements in $\ol{k}$.
\end{defin}

\begin{prop} \label{Prop_Polynomial_Ramification}
Let $\PPP_i = (b_{i,1}, \ldots, b_{i, l_i})$ ($1 \leq i \leq n$) be a collection of ramification conditions (not necessarily distinct), and let $\beta_1, \ldots, \beta_n$ be distinct points in $k$. Then there exists a polynomial $P \in k[x]$ that satisfies local decompositions of type $\PPP_i$ at $\beta_i$ for $1 \leq i \leq n$. Moreover, we can choose $\deg(P)$ to be any sufficiently large positive integer.  
\end{prop}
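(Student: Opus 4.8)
The plan is to realize $P$ as a polynomial map, totally ramified at $\infty$, whose prescribed ramification over the finite points $\beta_1,\dots,\beta_n$ is forced by Hermite interpolation, and whose remaining (unprescribed) ramification is made simple by a genericity argument built on Lemma \ref{Lemma_Appendix_Polynomials}.

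First I would set up the data. Write $N=\sum_i\sum_j b_{i,j}$ and, using that $k$ is infinite, choose pairwise distinct points $\alpha_{i,j}\in k$, one for each entry of each $\PPP_i$. Put $M_i(x)=\prod_j(x-\alpha_{i,j})^{b_{i,j}}$, $M=\prod_i M_i$, and $N_i=M/M_i$. Since the moduli $(x-\alpha_{i,j})^{b_{i,j}}$ are pairwise coprime, the Chinese Remainder Theorem in $k[x]$ produces $P_0\in k[x]$ with $\deg P_0<N$ and $P_0\equiv\beta_i\pmod{(x-\alpha_{i,j})^{b_{i,j}}}$ for all $i,j$; equivalently $M_i\mid(P_0-\beta_i)$, so we may set $R_i=(P_0-\beta_i)/M_i\in k[x]$. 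Every candidate then has the form $P=P_0+Mg$ with $g\in k[x]$ of degree $\le d-N$, where $d$ is the desired large degree, attained once the leading coefficient of $g$ is nonzero. The point of this coset description is the factorization $P-\beta_i=M_i\,(R_i+N_i g)=M_i\,F_i$, which already exhibits $\alpha_{i,j}$ as a root of $P-\beta_i$ of multiplicity at least $b_{i,j}$.

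It then remains to choose $g$ over $k$ so that, for every $i$, the cofactor $F_i=R_i+N_i g$ is separable and vanishes at none of the $\alpha_{i,j}$: the first condition forces all extra roots of $P-\beta_i$ to be simple, while the second pins the multiplicity at $\alpha_{i,j}$ to be exactly $b_{i,j}$, yielding precisely a local decomposition of type $\PPP_i$ at $\beta_i$ (the leading coefficient matches automatically). Each requirement is a Zariski-open condition on the coefficients of $g$, namely $\disc(F_i)\neq 0$ and $F_i(\alpha_{i,j})\neq 0$, so I would check each is nonempty and then intersect. Nonemptiness of $\{F_i(\alpha_{i,j})\neq 0\}$ is immediate, since $N_i(\alpha_{i,j})\neq 0$ makes $F_i(\alpha_{i,j})$ a nonconstant affine function of $g$. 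Nonemptiness of the separability locus is exactly where Lemma \ref{Lemma_Appendix_Polynomials} enters: taking $g=c$ constant gives the pencil $F_i=R_i+c\,N_i$, and the roots of $N_i$ are precisely the $\alpha_{i',j'}$ with $i'\neq i$, at which $R_i(\alpha_{i',j'})=(\beta_{i'}-\beta_i)/M_i(\alpha_{i',j'})\neq 0$; hence $R_i$ and $N_i$ have no common root, and the lemma gives that $F_i$ is square-free for all but finitely many $c$, so $\disc(F_i)$ is not identically zero in the coefficients of $g$.

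Finally, since $\AA^{d-N+1}$ is irreducible and $k$ is infinite, the finite intersection of these nonempty open sets, together with the open condition that $g$ have nonzero leading coefficient, contains a $k$-rational point $g$. The resulting $P=P_0+Mg\in k[x]$ has degree $d$ and satisfies all the required local decompositions, and $d$ may be taken to be any sufficiently large integer. I expect the main obstacle to be this genericity step: one must guarantee a \emph{single} $g$ defined over $k$ (not merely over $\ol{k}$) that simultaneously separates the unprescribed roots in every fiber while keeping the prescribed multiplicities exact. This is exactly what Lemma \ref{Lemma_Appendix_Polynomials} is tailored to deliver, once the coprimality of $R_i$ and $N_i$ is checked to make the relevant discriminants non-vanishing.
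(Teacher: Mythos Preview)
Your approach is essentially the paper's: Chinese Remainder to place the prescribed multiplicities, then Lemma \ref{Lemma_Appendix_Polynomials} to force the remaining roots to be simple. The paper is slightly more economical --- it builds the \emph{exact} multiplicity $b_{i,j}$ into the CRT step by demanding $Q\equiv\beta_i+(x-a_{ij})^{b_{ij}}\pmod{(x-a_{ij})^{b_{ij}+1}}$ and then varies only a single scalar $c$ rather than a full polynomial $g$ --- but the underlying idea is identical, and your handling of the degree via $\deg g=d-N$ is arguably cleaner than the paper's trick of padding $H$ with an extra power of $(x-a_{11})$.

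There is, however, a small technical slip in your genericity step. You assert that ``$F_i$ separable'' is the Zariski-open condition $\disc(F_i)\neq 0$ on $\AA^{d-N+1}$ and verify non-triviality on the constant pencil $g=c$. But the discriminant is tied to a \emph{formal} degree: viewing $F_i$ as a polynomial of degree $d-\deg M_i$ (its degree when $g$ has nonzero leading coefficient), the associated binary form acquires a root at $\infty$ of multiplicity $d-N$ whenever $g$ is constant, so for $d\ge N+2$ this discriminant vanishes identically along your test pencil and the argument as written does not show it is generically nonzero. (Equivalently: ``has no repeated root'' is \emph{not} Zariski-closed on the space of polynomials of bounded degree, so its complement is not open.) The repair is immediate: run the same coprimality check along the pencil $g=g_0+c$ for any fixed $g_0$ of degree exactly $d-N$. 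Then $F_i=(R_i+N_ig_0)+cN_i$ has constant degree $d-\deg M_i$, the pair $(R_i+N_ig_0,N_i)$ is still coprime (a common root would be a common root of $R_i$ and $N_i$), and Lemma \ref{Lemma_Appendix_Polynomials} gives separable $F_i$ for all but finitely many $c$, exhibiting a point in the desired open locus with $g$ of full degree.
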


\begin{proof}
Choose distinct points $a_{ij} \in k$ for $1 \leq i \leq n$, $1 \leq j \leq l_i$. By the Chinese Remainder Theorem, there exists $Q \in k[x]$ such that
$$
Q(x) \equiv \beta_i + (x-a_{ij})^{b_{ij}} \!\!\! \mod  (x-a_{ij})^{b_{ij}+1},
$$
for $1 \leq i \leq n$, $1 \leq j \leq l_i$. We define
$$
H(x) = \prod_{i,j} (x-a_{ij})^{b_{ij}+1}
$$
and we let $A = \{a_{ij}\}_{i,j}$ be the set of its roots. Applying Lemma \ref{Lemma_Appendix_Polynomials} to $g_i = Q - \beta_i$ and $H$ for $1 \leq i \leq n$, we conclude that there exists a finite set $S \subset k$ such that if $c \in k$ lies outside $S$, the polynomials $g_i + c\, H$ contain no multiple roots outside of $A$ for $1 \leq i \leq n$. Choose any such $c$ and define $P = Q + c\, H$. We claim that $P$ satisfies the desired conditions. Indeed, note that the following properties hold.
\begin{enumerate}[\upshape(i)]
\item For $1 \leq i \leq n$, $1 \leq j \leq l_i$, the polynomial $P - \beta_i$ has a root of multiplicity $b_{ij}$ at the point $a_{ij}$. 
\item If $i' \neq i$, we have $P(a_{i'j}) = \beta_{i'} \neq \beta_i$ and therefore $P - \beta_i$ cannot have any root of the form $a_{i'j}$.
\item By construction, $P - \beta_i$ does not have multiple roots outside of $A$. 
\end{enumerate}

It remains to prove that we can take $\deg(P)$ to be any sufficiently large positive integer $d$. To show this, take $n = \max(\deg(Q), \deg(H))$. We claim that there exists $P$ satisfying the desired properties such that $\deg(P) = d$ for any $d > n$. Indeed, if we replace $H(x)$ by $(x-a_{11})^{d-\deg{H}} H(x)$ and ensure that $c \neq 0$ in the definition of $P$, it follows easily that $\deg(P) = d$.
\end{proof}

\begin{proof}[Proof of Theorem \ref{Theorem_Curve_FixedPoints}]

Without loss of generality, we may assume that $G = S_m$ for some $m \geq 2$. Given a partition $b_1 + \ldots + b_s$ of $m$, where $b_1 \geq \ldots \geq b_l > 1 = b_{l+1} = \ldots = b_{s}$ for some $l \geq 1$, we can define a ramification condition $\PPP = (b_1, \ldots, b_l)$. Let $\PPP_1, \ldots, \PPP_n$ be the ramification conditions obtained as we range over all possible partitions of $m$, except for $1 + \ldots + 1$. By Proposition \ref{Prop_Polynomial_Ramification}, we can construct a polynomial $P \in k[x]$ satisfying local decompositions of type $\PPP_i$ at distinct points $\beta_i$ for $1 \leq i \leq n$. Moreover, we may assume that $\deg(P)$ is some sufficiently large prime number $p$.

Let the group $S_p$ act on $p$ letters and embed $S_m$ inside $S_p$ as the subgroup that fixes the last $p-m$ letters. We want to construct $X$ as a ramified $S_p$-cover of $\PP^1$. Let $P_t(x) = P(x) - t$, where $t$ is an indeterminate, and define $L$ as the splitting field of $P_t$ over $k(t)$. It is clear that $\Gal(L/k(t))$ is a transitive subgroup of $S_p$; we claim that equality holds. Since $\Gal(L\ol{k}/\ol{k}(t))$ is a subgroup of $\Gal(L/k(t))$, it suffices to prove that the former is isomorphic to $S_p$. (Note that this also implies that $L$ is regular, i.e., $L \cap \ol{k} = k$.) We use a technique similar to \cite[Thm. 4.4.5]{Se08}. We may view the polynomial $P_t$ as a ramified cover $\PP^1 \to \PP^1$ of degree $p$. Note that $\beta_1, \ldots, \beta_n, \infty$ are among the ramification points. If $\PPP_i = (b^{(i)}_1, \ldots, b^{(i)}_{l_i})$, the inertia subgroup at $\beta_i$ is generated by an element of $S_p$ of cycle type $(b^{(i)}_1, \ldots, b^{(i)}_{l_i}, 1, \ldots, 1)$, while the inertia group at $\infty$ is a $p$-cycle. In particular, $\Gal(L\ol{k}/\ol{k}(t))$ contains the subgroup generated by a $p$-cycle and a transposition, which is all of $S_p$ since $p$ is prime. The claim follows immediately.

Let $X$ be the (unique) smooth projective curve defined over $k$ with function field $L$, which is geometrically irreducible since $L/k$ is regular. Note that $X$ can be endowed with a natural faithful $S_p$-action via the Galois action on $L$. If $Q$ is a closed point in $X_{\ol{k}}$ lying above $\beta_i$, then its stabilizer is a cyclic subgroup generated by an element of $S_p$ of cycle type $(b^{(i)}_1, \ldots, b^{(i)}_{l_i}, 1, \ldots, 1)$. Since any two subgroups of this form are conjugate, they all occur as stabilizers of points in the fibre above $\beta_i$. Clearly, any nontrivial element of $S_m$ has one of the above cycle types inside $S_p$, so it fixes at least one geometric point in $X$. The proof is complete.
\end{proof}

\end{document}